\setlist{
  listparindent=\parindent,
  parsep=0pt,
}
\numberwithin{equation}{section} 
\theoremstyle{plain} 
\newtheorem{theorem}{Theorem}[section]
\newtheorem{lemma}[theorem]{Lemma}
\newtheorem{proposition}[theorem]{Proposition}
\theoremstyle{definition} 
\newcommand\CorrespondingAuthor[1]{%
  \begingroup%
  \def\@makefnmark{}%
  \footnotetext{Corresponding author: #1}%
  \endgroup%
}
\renewenvironment{abstract}{%
  \small%
  \providecommand\keywords{%
    \par\medskip\noindent\textit{Keywords:}\xspace}%
  \begin{center}%
    \bfseries \abstractname\vspace{-.5em}\vspace{\z@}%
  \end{center}%
  \quote%
}{\endquote}
\DeclareMathOperator\Var{Var}
\DeclareMathOperator\Cov{Cov}
\DeclareMathOperator\EE{E}
\DeclareMathOperator\PP{P}
\DeclareMathOperator\tr{tr}
\newcommand{\ZZ}{\mathbb{Z}}
\newcommand{\RR}{\mathbb{R}}
\newcommand{\NN}{\mathbb{N}}
\newcommand{\bx}{\mathbf{x}}
\newcommand{\bX}{\mathbf{X}}
\newcommand{\bXu}{\mathbf{X}_{B_{u,n}}}
\newcommand{\lth}{\lambda_{\theta^\star}}
\newcommand{\dd}{\,\mathrm d}
\newcommand{\ii}{\mathbf i}
\newcommand{\LRL}{\mbox{{\sf LRL}}}
\newcommand{\LPL}{\mbox{{\sf LPL}}}
\newcommand{\Wna}{W_n\ominus \alpha_n}
\newcommand{\assPhi}{{\bf[$\mathbf {\Phi}$]}}
\newcommand{\assg}{{\bf[$\mathbf {g}$]}}
\newcommand{\assW}{{\bf[$\mathbf {W_n}$]}}
\begin{document}

\title{Parametric estimation of pairwise Gibbs point processes with infinite range interaction.}

\author[1]{Jean-Fran\c{c}ois Coeurjolly} 
\author[2,3]{Fr\'ed\'eric Lavancier}
\affil[1]{Univ. Grenoble Alpes, France\\
\texttt{Jean-Francois.Coeurjolly@upmf-grenoble.fr}.}
\affil[2]{Laboratoire de Math\'ematiques Jean Leray, Nantes University, France\\ 
 \texttt{Frederic.Lavancier@univ-nantes.fr}.}
 \affil[3]{ Inria, Centre Rennes Bretagne Atlantique, France}

\date{}


\maketitle

\begin{abstract}
  This paper is concerned with statistical inference for infinite range interaction Gibbs point processes and in particular for the large class of Ruelle superstable and lower regular pairwise interaction models. We extend classical statistical methodologies such as the pseudolikelihood and the logistic regression methods, originally defined and studied for finite range models. Then we prove that the associated  estimators  are strongly consistent and satisfy a central limit theorem, provided the pairwise interaction function  tends sufficiently fast to zero.  To this end, we introduce a  new central limit theorem for almost conditionally centered triangular arrays of random fields.

  \keywords Lennard-Jones potential; pseudolikelihood; central limit theorem.
\end{abstract}

\section{Introduction}\label{sec:intro}

Spatial Gibbs point processes are an important class of models used in spatial point pattern analysis \citep{vanLieshout:00,moeller:waagepetersen:04,illian:et:al:08}. Gibbs point processes can be viewed as modifications of the Poisson point process in order to introduce dependencies, such as attraction or repulsion, between points. These models initially arise from statistical  physics to approximate the interaction between pairs of particles \citep{ruelle:69,preston:76,georgii:88}. The most well-known example is the Lennard-Jones model \citep{jones1924} which yields repulsion at short scales and attraction at long scales.

Assuming   that the Gibbs model has a parametric form, an important question concerns the estimation of the parameters from a realization of the point process observed on a finite subset of $\RR^d$. 
 Popular solutions  include likelihood
\cite[e.g.][]{ogattane81,huang1999}, 
 pseudolikelihood
\cite[e.g.][]{besag:77,A-JenMol91,baddturn00} and  logistic regression \citep{baddeley:coeurjolly:rubak:waagepetersen:14}. 
The two latter methods are more interesting from a practical point of view as they avoid the computation of the normalizing constant in the likelihood, which is in most cases inaccessible for Gibbs point processes and must be approximated by simulation-based methods. We focus in this paper on the pseudolikelihood and logistic regression methods.

 When the Gibbs model is assumed to have a finite range interaction, consistency and asymptotic normality of  the pseudolikelihood and  logistic regression estimators are established in \cite{A-JenMol91,jensen:kunsch:94,billiot:coeurjolly:drouilhet:08,A-DerLav09,A-CoeDro10,baddeley:coeurjolly:rubak:waagepetersen:14}, for large families of Gibbs models. The finite range assumption means that there exists $R>0$ such that the particles do not interact at all 
 if they are at a distance greater than $R>0$ apart. For the two aforementioned  inference methods,  this assumption  turns out to be  crucial from  both a practical point of view and a theoretical point of view, as explained below. However this assumption may imply an artificial discontinuity of the interactions between particles, where two particles at a distance $R-\epsilon$ apart interact while they do not at a distance $R+\epsilon$, for any small $\epsilon>0$. This is for instance the case for the  widely used  Strauss model,  see e.g. \citet{moeller:waagepetersen:04}. In fact, this assumption rules out many interesting Gibbs models from statistical physics like the Lennard-Jones model. The purpose of this work is to extend the pseudolikelihood and logistic regression methods to infinite range interaction Gibbs models. 

From a practical point of view, an important issue is edge effects. Assume we observe a Gibbs point process with finite range interaction $R>0$ on a window $W\subset\RR^d$. Then the pseudolikelihood computed on $W$ actually depends on the point process on $W\oplus R$, where $W\oplus R$ denotes the dilation of $W$ by a ball with radius $R$. Some border correction is often used to make the pseudolikelihood score unbiased.   An obvious solution is to compute the pseudolikelihood on the eroded set $W\ominus R$, and in view of $(W\ominus R) \oplus R \subseteq W$ (see \citet{chiu2013}) the observation of the point process on $W$ is sufficient for the computation. 
From a theoretical point of view, standard technical tools for unbiased estimating equations are available to derive the asymptotic properties of the associated estimator.
 If the Gibbs point process has infinite range interaction, then the pseudolikelihood computed on $W$ depends on the point process over the whole space $\RR^d$. It is in general impossible to apply a border correction that preserves unbiasedness of the pseudolikelihood score function. 
We propose in Section~\ref{sec:method} a family of contrast functions that involve an eroded set, following the previous border correction, and a truncated range of interaction. 
The details are exposed in Section~\ref{sec:method}. However these contrast functions still lead to biased  score functions and the standard ingredients to derive consistency and asymptotic normality of the estimators do not apply.

The strong consistency of the maximum pseudolikelihood estimator was studied by \citet{mase:95} for pairwise interaction Gibbs point processes, including the infinite range interaction case, but under the assumption that the configuration of points outside $W$ is known.  Under the more realistic setting where the point process is observed only on $W$, we prove the strong consistency of our pseudolikelihood estimator in Proposition~\ref{prop:cvps}. 
Our result is valid for a large family of pairwise Gibbs models, namely the class of Ruelle superstable and lower regular models. The asymptotic normality is more challenging to establish. When the pseudolikelihood score function is unbiased, the main ingredient is a central limit theorem for conditionally centered random fields proved and generalized in \citet{guyon1992,jensen:kunsch:94,comets:janzura:98,dedecker:98,coeurjolly:lavancier:13}. 
It allows in particular to avoid mixing assumptions for Gibbs point process that are only known in restrictive frameworks (see for instance \citet{heinrich:92,jensen:93}). In our infinite range setting where the score function is biased, a new ingredient is needed. 
We establish in Appendix~\ref{sec:clt}  a new central limit theorem for triangular arrays of  {\it almost} conditionally centered random fields. 
This allows us to derive in Theorem~\ref{thm:cvloi} the asymptotic normality of our pseudolikelihood estimator. Assume the potential decreases with a hyperbolic rate with exponent $\gamma_2$. While $\gamma_2>d$ is (in general) required to ensure the existence of a Gibbs measure and the consistency of the pseudolikelihood estimator, we require the condition $\gamma_2>2d$ to ensure a central limit theorem. The asymptotic normality when $d<\gamma_2<2d$ remains an open question. Nonetheless our result includes the important example of the Lennard-Jones model in dimension $d=2$, for which $\gamma_2=6$. Proposition~\ref{prop:logistic} discusses similar asymptotic results for  the  logistic regression  estimator.

The remainder of this  paper is organized as follows. In Section~\ref{sec:method} we recall some basic facts about Gibbs point processes and we explain how to generalize inference methods for Gibbs models with infinite range interaction. 
We derive in Section~\ref{sec:IR} the asymptotic properties of our estimators. Section~\ref{sec:sim} contains a simulation study concerning the estimation of the Lennard-Jones potential, where some recommendations are derived for the practical choice of tuning parameters in the pseudolikelihood contrast function. 
Appendix~\ref{sec:clt}  contains our main technical tool, namely   a central limit theorem for  almost conditionally centered random fields, and Appendix~\ref{sec:auxiliary} gathers auxiliary lemmas.

\section{Background and statistical methodology}\label{sec:method}

\subsection{Notation}
We write $\Lambda\Subset\RR^d$ for a bounded set $\Lambda$ in $\RR^d$.
A configuration of points  $\bx$ is a locally finite subset of $\RR^d$, which means that the set $\bx_{\Lambda}:=\bx\cap \Lambda$ is finite for any set $\Lambda\Subset\RR^d$. We  use the notation $\bx_{\Lambda^c}=\bx\setminus\bx_{\Lambda}$ and  denote by $\Omega_0$ the space of all locally finite configurations  of points in $\RR^d$. For a $(p,q)$ matrix $M$ with real entries, we denote by $\|M\|=\tr(M^\top M)^{1/2}$ its Frobenius norm where $\tr$ is the trace operator and $M^\top$ is the transpose matrix of $M$. For a vector $z\in\RR^p$, $\|z\|$ reduces to its Euclidean norm. For a bounded set $E\subset \ZZ^d$, $|E|$ denotes  the number of elements of $E$, while  for $z\in \RR^p$ or $i\in\ZZ^p$, $|z|$ and $|i|$ stand for the uniform norm.

At many places in the document, we use the notation $c$ to denote a generic positive  constant which may vary from line to line.

\subsection{Pairwise interaction Gibbs point processes}\label{sec:gibbs}

We briefly recall the needed background material on point processes and we refer to \citet{daley:vere-jones:03} for more  details. A point process is a probability measure on $\Omega_0$.  The reference distribution on $\Omega_0$ is the homogeneous Poisson point process with intensity $\beta>0$, denoted by $\pi^\beta$. For $\Lambda\Subset\RR^d$, we  write $\pi^\beta_\Lambda$ for the restriction of $\pi^\beta$ to $\Lambda$. For any $\Delta\Subset\RR^d$ and $\bx\in \Omega_0$,  $N_\Delta(\bx)$ denotes the number of elements of $\bx\cap\Delta$.
Let $\Delta_i$ be the unit cube centered at $i\in \ZZ^d$.  We consider the following space of tempered configurations. 
\[ \Omega_T=\{\bx\in\Omega_0;\; \exists t>0, \forall n\geq 1, \sum_{i\in\ZZ^d, |i|\leq n} N^2_{ \Delta_i}(\bx) \leq t (2n+1)^d\}.\]
From the ergodic theorem (see \cite{guyon:95}), any second order stationary measure on $\Omega_0$ is supported on $\Omega_T$. We denote by $\Phi :\RR^d\to \RR\cup\{+\infty\}$ a pair potential function,  to which we associate the pairwise energy function $H_\Lambda : \Omega_T \to \RR\cup\{+\infty\}$, indexed by Borel sets $\Lambda\Subset\RR^d$ and defined by
\begin{equation}\label{defH}
H_\Lambda (\bx)=\frac 12 \sum_{\begin{subarray}{c}
  u,v\,\in\bx, \, u\neq v,\\ \{u,v\}\cap \bx_\Lambda\neq\emptyset \end{subarray}}  \Phi(u-v)\end{equation}
and we let
 \[\Omega = \{\bx\in\Omega_T,\,  \forall\Lambda\Subset\RR^d \; H_\Lambda(\bx)<\infty\}.\]

Following the Dobrushin-Lanford-Ruelle formalism, see \cite{preston:76}, we say that $P$ is a Gibbs measure with activity parameter $\beta>0$ for the pair potential function $\Phi$ if $P(\Omega)=1$ and for $P$-almost every configuration $\bx$ and any $\Lambda\Subset\RR^d$, the conditional law of $P$ given $\bx_{\Lambda^c}$
is absolutely continuous with respect to $\pi^\beta_\Lambda$  with the density  $\exp\{-H_\Lambda (\bx)\}/Z_\Lambda(\bx_{\Lambda^c})$, where $Z_\Lambda(\bx_{\Lambda^c})=\int_\Omega \exp\{-H_\Lambda (\bx_\Lambda \cup   \bx_{\Lambda^c})\} \, \pi_\Lambda^\beta(\dd\bx_{\Lambda^c})$ is the normalizing constant.

We use at many places in this paper the  GNZ equation, after \cite{A-Geo76} and \cite{nguyen:zessin:79b}, recalled below, which is a characterization of a Gibbs measure. It is given in terms  of the Papangelou conditional intensity $\lambda : \RR^d\times\Omega \to \RR_+$ defined for any $\Lambda\ni u$ by 
\begin{equation}\label{eq:pigpp}
\lambda(u,\bx) = \beta\, \frac{e^{-H_\Lambda(\bx \cup u)}}{e^{-H_\Lambda(\bx)}} =  \beta\, e^{-\sum_{v\in \bx}\Phi(v-u)}.  
\end{equation}
This quantity does not depend on $\Lambda$, provided $u\in\Lambda$, and  can be viewed as the conditional probability to have a point in a vicinity of $u$, given that the configuration elsewhere is $\bx$. 

\begin{theorem}[GNZ formula]
A probability measure $P$ on $\Omega$ is a Gibbs measure  with activity parameter  $\beta>0$  for the pair potential function $\Phi$ if for any measurable function $f:\Omega\times\RR^d \to \RR$ such that the following expectations are finite,
\begin{equation}\label{eq:GNZ}
\EE \left\{ \sum_{u\in\bX} f(u,\bX \setminus u)\right\}= \EE \left\{  \int  f(u,\bX) \lambda(u,\bX)  \dd u \right\}
\end{equation}
where $\EE$ denotes the expectation with respect to $P$. 
\end{theorem}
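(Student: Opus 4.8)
\medskip\noindent\emph{Proof idea.} The plan is to establish the equivalence (the implication actually used later in the paper being the ``only if'' part, that every Gibbs $P$ satisfies \eqref{eq:GNZ}) by running a single computation in both directions: condition $P$ on $\bX_{\Lambda^c}$ for a bounded $\Lambda$, and apply the Mecke (Slivnyak) formula for the Poisson reference process $\pi^\beta_\Lambda$.

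For ``Gibbs $\Rightarrow$ \eqref{eq:GNZ}'' I would first reduce to a nonnegative $f$ that vanishes for $u\notin\Lambda$ with $\Lambda\Subset\RR^d$ fixed: the general nonnegative case then follows by monotone convergence as $\Lambda\uparrow\RR^d$, and a signed $f$ with finite expectations is treated by splitting into $f^+$ and $f^-$, the finiteness hypothesis ruling out any $\infty-\infty$. For such $f$, conditioning on $\bX_{\Lambda^c}$ and using the DLR density $e^{-H_\Lambda(\cdot)}/Z_\Lambda(\bX_{\Lambda^c})$ of $P$ with respect to $\pi^\beta_\Lambda$, the left-hand side of \eqref{eq:GNZ} becomes the $P$-expectation of a $\pi^\beta_\Lambda$-integral of $\sum_{u\in\bx_\Lambda} f(u,(\bx\setminus u)\cup\bX_{\Lambda^c})\,e^{-H_\Lambda(\bx_\Lambda\cup\bX_{\Lambda^c})}/Z_\Lambda(\bX_{\Lambda^c})$. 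The Mecke formula turns $\EE_{\pi^\beta_\Lambda}\sum_{u\in\bX_\Lambda} g(u,\bX_\Lambda\setminus u)$ into $\beta\int_\Lambda\EE_{\pi^\beta_\Lambda} g(u,\bX_\Lambda\cup u)\dd u$; applying it here, adjoining the point $u$ changes $e^{-H_\Lambda(\bx)}$ into $e^{-H_\Lambda(\bx\cup u)}$, and \eqref{eq:pigpp}, namely $e^{-H_\Lambda(\bx\cup u)}=\beta^{-1}\lambda(u,\bx)\,e^{-H_\Lambda(\bx)}$, extracts exactly a factor $\lambda(u,\bx)$. Re-assembling the conditional expectation yields $\EE\{\int_\Lambda f(u,\bX)\lambda(u,\bX)\dd u\}$, which is the right-hand side of \eqref{eq:GNZ} because $f$ is supported on $u\in\Lambda$.

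For the converse I would assume \eqref{eq:GNZ} for all admissible $f$ and feed it the test functions $f(u,\bx)=\mathbf{1}_\Lambda(u)\,F(\bx_\Lambda\cup u,\bx_{\Lambda^c})$. This shows that the reduced Campbell measure of $P$ over $\Lambda\times\Omega$ is absolutely continuous with respect to $\mathrm{Leb}\otimes P$ with density $\lambda$, and by the classical GNZ inversion (\cite{A-Geo76,nguyen:zessin:79b}) this identifies the conditional law $P(\cdot\mid\bX_{\Lambda^c})$: building a configuration in $\Lambda$ from the empty one by adding points one at a time, each step contributes a factor $\lambda(\cdot,\cdot)/\beta=e^{-\sum_{v}\Phi(\cdot-v)}$, so this conditional law has $\pi^\beta_\Lambda$-density proportional to $e^{-H_\Lambda}$ with normalization $Z_\Lambda(\bX_{\Lambda^c})$. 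Since $\Lambda\Subset\RR^d$ is arbitrary, $P$ is a Gibbs measure for $\Phi$ with activity $\beta$.

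The interchanges of $\sum$, $\int$ and $\lim_{\Lambda\uparrow\RR^d}$ are routine given nonnegativity and the finiteness hypothesis, and the algebraic heart is merely \eqref{eq:pigpp}. The step I expect to be the main obstacle is the inversion in the converse: the Campbell identity constrains $P(\cdot\mid\bX_{\Lambda^c})$ only through its Papangelou kernel, so one must argue that it pins down the conditional density itself rather than just ratios of densities, which requires $Z_\Lambda(\bX_{\Lambda^c})<\infty$ $P$-almost surely (guaranteed by $P(\Omega)=1$) together with a measurable disintegration of $P$ given $\bX_{\Lambda^c}$.
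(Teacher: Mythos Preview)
The paper does not actually prove this theorem: it is stated as a known characterization, with attribution to \cite{A-Geo76} and \cite{nguyen:zessin:79b}, and no proof is given. What the paper \emph{does} prove is the subsequent Lemma (the conditional GNZ formula), and its proof follows precisely the mechanism you outline for the forward direction: condition on $\bX_{\Lambda^c}$, use the DLR density with respect to $\pi^\beta_\Lambda$, apply Slivnyak--Mecke to the Poisson integral, and use \eqref{eq:pigpp} to convert $\beta\,e^{-H_\Lambda(\bx\cup u)}$ into $e^{-H_\Lambda(\bx)}\lambda(u,\bx)$. So your ``Gibbs $\Rightarrow$ \eqref{eq:GNZ}'' argument is essentially the paper's own computation, only without the conditioning retained in the final statement.

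One notational slip: as you wrote it, the Mecke formula would turn $\EE_{\pi^\beta_\Lambda}\sum_{u\in\bX_\Lambda} g(u,\bX_\Lambda\setminus u)$ into $\beta\int_\Lambda\EE_{\pi^\beta_\Lambda} g(u,\bX_\Lambda)\,\dd u$, not $g(u,\bX_\Lambda\cup u)$. The reason a point gets adjoined in the application is that the integrand $e^{-H_\Lambda(\bX_\Lambda\cup\bx_{\Lambda^c}')}$ depends on the full $\bX_\Lambda$ rather than $\bX_\Lambda\setminus u$, so when you rewrite it as a function of $\bX_\Lambda\setminus u$ it already carries an implicit $\cup\,u$; after Mecke this becomes $e^{-H_\Lambda(\bX_\Lambda\cup u\cup\bx_{\Lambda^c}')}$. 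Your subsequent sentence shows you have the right picture, but the formula as stated is off.

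For the converse you correctly identify the delicate point (that the Campbell identity determines the Papangelou kernel and one must then recover the conditional density), and you appeal to the original GNZ inversion for this step; since the paper simply cites the same sources rather than proving the converse, there is nothing to compare against here.
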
 
This result can  be refined by  a  conditional version  stated in the following lemma. Its proof is actually part of the initial proof of~\eqref{eq:GNZ}, see also \citet[Proof of Theorem~2]{billiot:coeurjolly:drouilhet:08} for a particular case. We reproduce the demonstration below. 
\begin{lemma}[Conditional GNZ formula] Let  $P$ be a Gibbs measure  with activity parameter  $\beta>0$, with  pair potential  $\Phi$ and Papangelou conditional intensity $\lambda$. Then  for any measurable function $f:\Omega\times\RR^d \to \RR$ and for any  $\Lambda\Subset\RR^d$ such that the following expectations are finite 
\begin{equation}\label{condGNZ}
\EE  \left\{\sum_{u\in\bX_\Lambda} f(u,\bX\setminus u)   \mid \bX_{\Lambda^c}\right\} = \EE \left\{ \int_\Lambda  f(u,\bX)   \lambda(u,\bX)   \mid \bX_{\Lambda^c} \right\}
\end{equation}
where $\EE$ denotes the expectation with respect to $P$.
\end{lemma}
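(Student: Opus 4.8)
The plan is to obtain~\eqref{condGNZ} directly from the Dobrushin--Lanford--Ruelle description of $P$ combined with the Mecke equation for the Poisson process; this is exactly the local computation that underlies the GNZ identity~\eqref{eq:GNZ}. Fix $\Lambda\Subset\RR^d$. I would first reduce to the case $f\geq 0$ by splitting $f=f^+-f^-$, which is legitimate since the expectations in~\eqref{condGNZ} are assumed finite. By definition of a Gibbs measure, for $P$-almost every $\bx_{\Lambda^c}$ the conditional law of $\bX_\Lambda$ given $\bX_{\Lambda^c}=\bx_{\Lambda^c}$ has density $\exp\{-H_\Lambda(\bx_\Lambda\cup\bx_{\Lambda^c})\}/Z_\Lambda(\bx_{\Lambda^c})$ with respect to $\pi^\beta_\Lambda$, and on this event $\bX\setminus u=(\bX_\Lambda\setminus u)\cup\bx_{\Lambda^c}$ for every $u\in\bX_\Lambda$. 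Hence the left-hand side of~\eqref{condGNZ}, evaluated at $\bx_{\Lambda^c}$, equals $Z_\Lambda(\bx_{\Lambda^c})^{-1}\int \bigl(\sum_{u\in\bx_\Lambda} f(u,(\bx_\Lambda\setminus u)\cup\bx_{\Lambda^c})\,e^{-H_\Lambda(\bx_\Lambda\cup\bx_{\Lambda^c})}\bigr)\,\pi^\beta_\Lambda(\dd\bx_\Lambda)$.

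The key step is to apply the Mecke equation for $\pi^\beta_\Lambda$, namely $\EE_{\pi^\beta_\Lambda}\{\sum_{u\in\bX}g(u,\bX\setminus u)\}=\beta\int_\Lambda\EE_{\pi^\beta_\Lambda}\{g(u,\bX)\}\,\dd u$, to the function $g(u,\by):=f(u,\by\cup\bx_{\Lambda^c})\,e^{-H_\Lambda(\by\cup u\cup\bx_{\Lambda^c})}$ of a point $u$ and a finite configuration $\by$ in $\Lambda$; indeed $(\bx_\Lambda\setminus u)\cup u=\bx_\Lambda$ for $u\in\bx_\Lambda$, so the sum above is exactly $\sum_{u\in\bx_\Lambda}g(u,\bx_\Lambda\setminus u)$. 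For $u\in\Lambda$, the definition~\eqref{eq:pigpp} of the Papangelou conditional intensity gives $e^{-H_\Lambda(\bx_\Lambda\cup u\cup\bx_{\Lambda^c})}=\beta^{-1}\lambda(u,\bx_\Lambda\cup\bx_{\Lambda^c})\,e^{-H_\Lambda(\bx_\Lambda\cup\bx_{\Lambda^c})}$ (on $\Omega$, with the convention $e^{-\infty}=0$), so the right-hand side of the Mecke identity becomes $\int_\Lambda\EE_{\pi^\beta_\Lambda}\{f(u,\bX_\Lambda\cup\bx_{\Lambda^c})\,\lambda(u,\bX_\Lambda\cup\bx_{\Lambda^c})\,e^{-H_\Lambda(\bX_\Lambda\cup\bx_{\Lambda^c})}\}\,\dd u$. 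Dividing through by $Z_\Lambda(\bx_{\Lambda^c})$ and re-reading the conditional density of $P$, this is $\int_\Lambda\EE\{f(u,\bX)\lambda(u,\bX)\mid\bX_{\Lambda^c}=\bx_{\Lambda^c}\}\,\dd u$; a final use of Fubini moves the integral over $\Lambda$ inside the conditional expectation, which is the right-hand side of~\eqref{condGNZ}. Since the identity holds for $P$-almost every $\bx_{\Lambda^c}$, the lemma follows.

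I expect the only real obstacle to be measure-theoretic bookkeeping rather than anything conceptual: one has to handle configurations where $H_\Lambda=+\infty$ (covered by the conventions above and by $P(\Omega)=1$, so that $\lambda$ and the relevant integrands vanish there), and one has to justify the reduction to $f\geq 0$ and the use of Fubini from the assumed finiteness of the expectations, which can be done by a monotone approximation of $f$ by bounded functions with bounded support in the spatial argument. Once these points are dealt with, the displayed chain of equalities is a routine computation.
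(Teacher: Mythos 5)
Your proposal is correct and follows essentially the same route as the paper's own proof: rewrite the conditional expectation via the DLR density with respect to $\pi^\beta_\Lambda$, apply the Slivnyak--Mecke formula, and use the identity $\beta\,e^{-H_\Lambda(\bx\cup u)}=e^{-H_\Lambda(\bx)}\lambda(u,\bx)$ before reading the result back as a conditional expectation. The extra measure-theoretic remarks (reduction to $f\geq 0$, handling $H_\Lambda=+\infty$) are sensible but do not change the argument.
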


\begin{proof}
By definition of the conditional law of $P$ given $\bx_{\Lambda^c}^\prime$
\begin{align*}
\EE\left\{\sum_{u\in\bX_\Lambda} f(u,\bX\setminus u) \mid \bX_{\Lambda^c}  =  \bx_{\Lambda^c}' \right\} & = \int_{\Omega} \sum_{u\in\bx_\Lambda} f(u,\bx_\Lambda \cup   \bx_{\Lambda^c}' \setminus u) \frac{e^{-H_\Lambda(\bx_\Lambda \cup   \bx_{\Lambda^c}')}}{Z_\Lambda(\bx_{\Lambda^c}')} \pi_\Lambda^\beta(\dd\bx_\Lambda)\\
&= \EE_{\pi^\beta_\Lambda} \left\{\sum_{u\in\bX_\Lambda} f(u,\bX_\Lambda \cup   \bx_{\Lambda^c}' \setminus u) \frac{e^{-H_\Lambda(\bX_\Lambda \cup   \bx_{\Lambda^c}')}}{Z_\Lambda(\bx_{\Lambda^c}')} \right\}
\end{align*}
where $\EE_{\pi^\beta_\Lambda}$ denotes the expectation with respect to  $\pi_\Lambda^\beta$. From the Slivnyak-Mecke formula (\cite{slivnyak1962,mecke1968}), we know that for any admissible measurable function $h$ 
\[\EE_{\pi^\beta_\Lambda} \left\{\sum_{u\in\bX} h(u,\bX\setminus u) \right\}
= \beta \EE_{\pi^\beta_\Lambda} \left\{\int_{\RR^d} h(u,\bX) \dd u \right\}.\]
By definition of the Papangelou conditional intensity \eqref{eq:pigpp}, we also have for any $u\in\Lambda$,
$\beta\, e^{-H_\Lambda(\bx \cup u)} = e^{-H_\Lambda(\bx)}  \lambda(u,\bx)$.
Using these two facts, we conclude by
\begin{align*}
\EE  \bigg\{\sum_{u\in\bX_\Lambda} f(u,\bX\setminus u)  & \mid \bX_{\Lambda^c} = \bx_{\Lambda^c}' \bigg\}  \\
 &
 = \beta \EE_{\pi^\beta_\Lambda} \left\{
 \int_\Lambda  f(u,\bX_\Lambda \cup   \bx_{\Lambda^c}')   \frac{e^{-H_\Lambda(\bX_\Lambda \cup   \bx_{\Lambda^c}'\cup u)}}{Z_\Lambda(\bx_{\Lambda^c}')}\dd u \right\}\\
&= \EE_{\pi^\beta_\Lambda} \left\{\int_\Lambda  f(u,\bX_\Lambda \cup   \bx_{\Lambda^c}')   \lambda(u,\bX_\Lambda \cup   \bx_{\Lambda^c}') \frac{e^{-H_\Lambda(\bX_\Lambda \cup   \bx_{\Lambda^c}')}}{Z_\Lambda(\bx_{\Lambda^c}')}\dd u \right\}\\
&= \int_{\Omega}  \int_\Lambda  f(u,\bx_\Lambda \cup   \bx_{\Lambda^c}')   \lambda(u,\bx_\Lambda \cup   \bx_{\Lambda^c}') \frac{e^{-H_\Lambda(\bx_\Lambda \cup   \bx_{\Lambda^c}')}}{Z_\Lambda(\bx_{\Lambda^c}')} \dd u \, \pi_\Lambda^\beta(\dd\bx_\Lambda)\\
& = \EE \left\{ \int_\Lambda  f(u,\bX)   \lambda(u,\bX)   \mid \bX_{\Lambda^c}  =  \bx_{\Lambda^c}' \right\}. 
\end{align*}
\end{proof}

The existence of a Gibbs measure $P$ satisfying the above definition and characterization is a difficult question. Sufficient conditions on the pair potential $\Phi$ can be found in \cite{ruelle:69} and are also discussed in \cite{preston:76}. 
The special case of finite range potentials, i.e. compactly supported functions $\Phi$, is treated in  \cite{A-BerBilDro99}. As we are mainly interested in this paper by infinite range potentials, we introduce the following assumption, that leads to the existence of at least one stationary Gibbs measure, as proved in \cite{ruelle:69}.\\

\noindent \assPhi~The potential $\Phi$ is  bounded from below and there exist $0<r_1<r_2<\infty$, $c>0$ and $\gamma_1,\gamma_2>d$ such that $\Phi(u) \geq c \|u\|^{-\gamma_1}$ for $\|u\|\leq r_1$ and $|\Phi(u)| \leq c \|u\|^{-\gamma_2}$ for $\|u\|\geq r_2$.\\

Examples of potentials satisfying  \assPhi~are $\Phi(u)=\|u\|^{-\gamma}$ with $\gamma>d$ and $\Phi(u)=e^{-\|u\|}\|u\|^{-\gamma}$ with $\gamma>d$, in which cases  the assumption is satisfied with $\gamma_1=\gamma_2=\gamma$. Another important example is the general Lennard-Jones  pair potential defined for some $d<\gamma_2<\gamma_1$ and some $A,B>0$ by $\Phi(u)=A\|u\|^{-\gamma_1}-B\|u\|^{-\gamma_2}$.
The standard Lennard-Jones model corresponds to $d=2$, $\gamma_1=12$ and $\gamma_2=6$. The main interest of this model is that  it can model repulsion at small scales and attraction at large scales.

\subsection{Inference for infinite range Gibbs point processes} \label{sec:methodology}

In this section, we extend the usual statistical methodologies available for finite range Gibbs point processes to the infinite range case. We assume that the Gibbs measure  is parametric, in that the explicit expression of  the associated Papangelou conditional intensity \eqref{eq:pigpp} is entirely determined by the knowledge of  some parameter $\theta\in\Theta$, including the activity parameter $\beta>0$, where $\Theta$ is an open bounded set of $\RR^p$. We stress this assumption by writing  $\lambda_\theta$ instead of $\lambda$ and $\Phi_\theta$ instead of $\Phi$. For brevity, assumption \assPhi~now means that $\Phi_\theta$ fulfills this assumption for any $\theta\in \Theta$.

Assume that we observe the point process $\bX$ in $W_n$ where $(W_n)_{n\geq 1}$ is a sequence of bounded domains which converges to $\RR^d$ as $n\to \infty$. As outlined in the introduction, the pseudolikelihood and the logistic regression methods are popular alternatives to the maximum likelihood as they do not involve the normalizing constant. The associated estimators are respectively defined as the maximum of 
\begin{align}
\LPL_{W_n}(\bX;\theta) &= \sum_{u \in \bX_{W_n}} \log\lambda_\theta(u,\bX\setminus u)- \int_{W_n}\lambda_\theta(u,\bX)\dd u\label{lpl0}\\
\LRL_{W_n}(\bX;\theta) &= \sum_{u \in \bX_{W_n}} \log \frac{\lambda_\theta(u,\bX\setminus u)}{\lambda_\theta(u,\bX\setminus u)+\rho} - \int_{W_n}\rho \log \frac{\lambda_\theta(u,\bX)+\rho}{\rho}\dd u \label{lrl0}
\end{align}
where $\rho$ is some fixed positive real number. 

A problem however occurs. The integrals in \eqref{lpl0} and~\eqref{lrl0} are not computable in practice because for values of $u$ close to the boundary of $W_n$, $\lambda_\theta(u,\bX)$ depends on $\bX_{W_n^c}$ which is not observed. When $\bX$ has a finite range $0<R<\infty$, meaning that $\Phi_\theta$ is compactly supported on the euclidean ball $B(0,R)$ or equivalently that for any $u\in \RR^d$ and any $\bx \in \Omega$, $\lambda_\theta(u,\bx)=\lambda_\theta(u,\bx_{B(u,R)})$, we can simply substitute $W_n$ by $W_{n}\ominus R$  in \eqref{lpl0} and~\eqref{lrl0}, where for  $\Lambda\Subset \RR^d$ and some $\kappa\geq 0$ the notation $\Lambda\ominus \kappa$ stands for the domain $\Lambda$ eroded by the ball $B(0,\kappa)$. Using this border correction  $\lambda_\theta(u,\bX)$ can  be indeed computed for any $u\in W_{n}\ominus R$. As a remaining practical issue, the integrals have  to be approximated  by some numerical scheme or by Monte-Carlo, see \cite{baddeley:coeurjolly:rubak:waagepetersen:14} for an efficient solution. 
 
 The asymptotic properties of the pseudolikelihood and the logistic regression estimators are well understood in this finite range setting, see the references in introduction. Maximizing the log-pseudolikelihood (or the logistic regression likelihood) on $W_{n}\ominus R$  is equivalent to cancel the score, i.e. the gradient of $\LPL_{W_{n}\ominus R}(\bX;\theta)$ (or $\LRL_{W_{n}\ominus R}(\bX;\theta)$) with respect to $\theta$. The key-ingredient is that both scores constitute unbiased estimating functions, since by application of the GNZ formula~\eqref{eq:GNZ} their expectation vanishes when $\theta$ corresponds to  the true parameter of the underlying Gibbs measure. Standard theoretical tools for unbiased estimating equations  (see e.g.~\citet{guyon:95})  can therefore be used  to study the consistency and asymptotic normality of the associated estimators.\\

In the infinite range setting, the situation becomes more delicate since for any $u$, $\lambda_\theta(u,\bX)$  depends on $\bX_{\Lambda}$ for any $\Lambda\subset\RR^d$. In this case,  we introduce the following modifications of \eqref{lpl0} and~\eqref{lrl0} that depend on two sequences of positive real numbers $(\alpha_n)$ and $(R_n)$
\begin{align}
\widetilde\LPL_{\Wna,R_n}(\bX;\theta) &= \sum_{u \in \bX_{\Wna}} \log\lambda_\theta(u,\bX_{u,R_n}\setminus u)- \int_{\Wna}\lambda_\theta(u,\bX_{u,R_n})\dd u\label{lpl}\\
\widetilde\LRL_{\Wna,R_n}( \bX;\theta) &= \sum_{u \in \bX_{\Wna}} \log \frac{\lambda_\theta(u,\bX_{u,R_n}\setminus u)}{\lambda_\theta(u,\bX_{u,R_n}\setminus u)+\rho} - \int_{\Wna} \!\!\!\!\!\!\!\!\!\!\rho \log \frac{\lambda_\theta(u,\bX_{u,R_n})+\rho}{\rho}\dd u \label{lrl}
\end{align}
where we denote $\bX_{u,R_n}=\bX_{B(u,R_n)\cap W_n}$. These expressions are computable from the single observation of $\bX$ on $W_n$, provided the integrals are approximated as usual by numerical scheme or by Monte-Carlo. Since they depend on two sequences $\alpha_n$ and $R_n$, \eqref{lpl} and \eqref{lrl} actually form a general family of contrast functions, important particular cases being the choices
\begin{itemize}
\item $\alpha_n=R_n$, which agrees with the classical border correction for finite range interaction models with range $R$ taking $R_n=R$;
 
\item $R_n=\infty$, accounting for the maximal possible range of interaction;

\item  $R_n=\infty$ and $\alpha_n=0$, which is a particular case of the previous choice where in addition no erosion is considered. 
\end{itemize}
We study in the next section the asymptotic properties of estimators derived from \eqref{lpl} and \eqref{lrl} for a wide class of sequences $\alpha_n$ and $R_n$, and based on a simulation study in Section~\ref{sec:sim}, we give some recommendations for the choice of these sequences in practice.
From a theoretical point view, these contrast functions introduce new challenges since the gradients of $\widetilde\LPL_{\Wna,R_n}( \bX;\theta)$ and  $\widetilde\LRL_{\Wna,R_n}( \bX;\theta)$ are no  longer  unbiased estimating equations in the infinite range case. To overcome this difficulty we prove a new central limit theorem in Appendix~\ref{sec:clt} for almost conditionally centered random fields.

\section{Asymptotic properties} \label{sec:IR}

We present asymptotic properties of the maximum pseudolikelihood estimate, derived from~\eqref{lpl}, for infinite range Gibbs point process. Similar results  for the maximum logistic regression derived from~\eqref{lrl} are presented at the end of this section without proof. 
We assume the window of observation expands to $\RR^d$ as follows.\\

\noindent \assW~$(W_n)$ is an increasing sequence of convex compact sets, such that $W_n\to \RR^d$ as $n\to \infty$.

\medskip

We focus on exponential family models of pairwise interaction Gibbs point  processes and rewrite the model~\eqref{eq:pigpp} for any $u\in \RR^d$ and $\bx \in \Omega$ as 
\begin{equation} \label{eq:model}
\lambda_\theta(u,\bx)= \beta e^{-\sum_{v \in \bx} \Phi_\theta(v-u)} = e^{- \theta^\top t(u,\bx)}  
\end{equation}
with $\theta_1=- \log\beta$ and $t=(t_1,\dots,t_p)^\top$ where $t_1(u,\bx)=1$  and
\begin{equation}\label{deft}
t_m(u,\bx)=\sum_{v \in \bx} g_m(v-u),\quad m=2,\ldots,p.
\end{equation}
In that connection, our framework amounts to assume that $\Phi = \sum_{m=2}^p \theta_m g_m $.  For convenience we let $g_1=0$ and we denote by $g$ the $p$-dimensional vector   $g=(0,g_2,\ldots,g_p)^\top$.  We make the following assumption on $g$.\\

\noindent \assg~For all $m\geq 2$, $g_m$ is bounded from below and there exist $\gamma_1,\gamma_2>d$ and $c_g,r_0>0$ such that  
\begin{itemize}
\item[(i)] $\forall \|x\|<r_0$ and $\forall\theta\in \Theta$, $\theta_2 \, g_2(x)\geq c_g \|x\|^{-\gamma_1}$
\item[(ii)] $\forall m\geq 3$, $g_m(x)=o(\|x\|^{-\gamma_1})$ as $\|x\| \to 0$
\item[(iii)] $\forall m\geq 2$ and $\forall \|x\|\geq r_0$, $|g_m(x)| \leq c \|x\|^{-\gamma_2}.$
\end{itemize}

Since $\Theta$ is bounded, \assg~implies \assPhi~which yields that for any $\theta\in \Theta$ there exists  a Gibbs measure $P_\theta$. Assumption \assg \,  allows us  to specify which function $g_m$ is responsible for the behavior at the origin of $\Phi_\theta$, namely $g_2$.
Note that the Lennard-Jones model defined in Section~\ref{sec:gibbs} (and the other examples presented in this section) fits this setting with $\theta_2=A$, $\theta_3=-B$, $g_2(u)=\|u\|^{-\gamma_1}$ and $g_3(u)=\|u\|^{-\gamma_2}$. In the sequel, $\theta^\star$ stands for the true parameter vector to estimate. In other words, we assume observing  a realization of a spatial point process $\bX$ with Gibbs measure~$P_{\theta^\star}$ on $W_n$. 

For exponential family models~\eqref{eq:model} the score function of the log-pseudolikelihood defined by~\eqref{lpl} writes $s_{\Wna,R_n}(\bX;\theta)$ where for any $\Delta\subset \RR^d$
\begin{equation}\label{defs}
 s_{\Delta,R_n}(\bX;\theta) =   \int_{\Delta} t(u,\bX_{u,R_n}) \lambda_\theta(u,\bX_{u,R_n}) \dd u - \sum_{u\in \bX_{\Delta}} t(u,\bX_{u,R_n} \setminus u).
\end{equation}

Our first result establishes the strong consistency of the maximum pseudolikelihood based on~\eqref{lpl} for infinite range Gibbs point processes and for a wide class of sequences $(\alpha_n,R_n)$. In close relation, \citet{mase:95}  proved the strong consistency of estimators derived from~\eqref{lpl0}. As pointed out in Section~\ref{sec:methodology}, the form~\eqref{lpl0} of log-pseudolikelihood is however unusable as it can only be computed if $\bX$ is observed on $\RR^d$. We obtain the same result but for estimators derived from the computable pseudolikelihood given by~\eqref{lpl}.


\begin{proposition} \label{prop:cvps}
 Assume that \assW~and \assg~hold. 
 Then for any $\bx\in\Omega$ the function $\theta \to -\widetilde\LPL_{\Wna,R_n}(\bx;\theta)$ is a convex function  with Hessian matrix  given by
\begin{align}
  -\frac{\dd}{\dd\theta \dd \theta^\top}\,  \widetilde \LPL_{\Wna,R_n}(\bx;\theta)
  &=  - \frac{\mathrm d}{\mathrm d \theta^\top} s_{\Wna,R_n}(\bx;\theta) 
    \nonumber \\
&=\int_{\Wna} t(u,\bx_{u,R_n})t(u,\bx_{u,R_n})^\top \lambda_\theta(u,\bx_{u,R_n}) \dd u. \label{eq:derivScore}
\end{align}
Moreover if  $\alpha_n |W_n|^{-1/d}\to 0$ and $R_n\to\infty$ as $n\to\infty$, and if for any $y \in \RR^p\setminus \{0\}$ 
\begin{equation}\label{eq:ident}
  P\left\{ y^\top t(0,\bX) \neq 0\right\} >0,
\end{equation}
then the maximum pseudolikelihood estimator 
\[
\widehat\theta_{\scriptsize\widetilde{\LPL}} = \mathrm{argmax}_{\theta\in \Theta} \widetilde{\LPL}_{\Wna,R_n}(\bX;\theta)  
\] 
converges almost surely to $\theta^\star$ as $n\to \infty$.
\end{proposition}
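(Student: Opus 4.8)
\emph{Plan.} The statement has a deterministic part (convexity and the Hessian formula) and a probabilistic part (strong consistency), and I would treat them separately, the first being routine and the second being where the work lies.

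\emph{Convexity and the Hessian.} Fix $\bx\in\Omega$ and $n$. Then $\bx_{\Wna}$ is finite, each $\bx_{u,R_n}=\bx_{B(u,R_n)\cap W_n}$ is finite, and since \assg~implies \assPhi~the potential $\Phi_\theta$ is bounded from below; hence $\log\lambda_\theta(u,\bx_{u,R_n}\setminus u)=-\theta^\top t(u,\bx_{u,R_n}\setminus u)$ is finite and $\lambda_\theta(u,\bx_{u,R_n})=e^{-\theta^\top t(u,\bx_{u,R_n})}$ is bounded over $u\in\Wna$, so $\widetilde\LPL_{\Wna,R_n}(\bx;\theta)$ is finite and smooth in $\theta$. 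Differentiating twice under the (finite) integral in \eqref{lpl}, the sum over $u\in\bx_{\Wna}$ is affine in $\theta$ and contributes nothing, while $\nabla_\theta^2\big(-\int_{\Wna}e^{-\theta^\top t(u,\bx_{u,R_n})}\dd u\big)=-\int_{\Wna}t(u,\bx_{u,R_n})t(u,\bx_{u,R_n})^\top\lambda_\theta(u,\bx_{u,R_n})\dd u$, which is \eqref{eq:derivScore}; as $tt^\top\succeq0$ and $\lambda_\theta>0$ this is positive semi-definite, so $\theta\mapsto-\widetilde\LPL_{\Wna,R_n}(\bx;\theta)$ is convex.

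\emph{Consistency: reduction to a limit contrast.} Because $-\widetilde\LPL_{\Wna,R_n}(\bX;\cdot)$ is convex, the maximum pseudolikelihood estimator converges $P_{\theta^\star}$-a.s.\ to $\theta^\star$ as soon as the (convex) normalized contrast $K_n(\theta):=|W_n|^{-1}\big(\widetilde\LPL_{\Wna,R_n}(\bX;\theta^\star)-\widetilde\LPL_{\Wna,R_n}(\bX;\theta)\big)$ converges a.s., for each $\theta\in\Theta$, to a (possibly random) convex limit $K(\theta)$ with a strict minimum at the interior point $\theta^\star$: pointwise convergence of convex functions on the open set $\Theta$ forces local uniform convergence, and then the minimizers converge. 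The coercivity keeping the argmin inside $\Theta$ for $n$ large follows from the ergodic theorem applied to the Hessian \eqref{eq:derivScore} at $\theta^\star$, whose limit $\EE_{\theta^\star}[t(0,\bX)t(0,\bX)^\top\lambda_{\theta^\star}(0,\bX)]$ is positive definite precisely by \eqref{eq:ident} (a null direction $y$ would force $y^\top t(0,\bX)=0$ $P_{\theta^\star}$-a.s.). So it remains to identify $K$ and check its minimum. I would do this in two substeps. (i) Compare $\widetilde\LPL_{\Wna,R_n}(\bX;\theta)$ with the non-computable pseudolikelihood $L_n^{\infty}(\bX;\theta)$ obtained by replacing every $\bX_{u,R_n}$ in \eqref{lpl} by the full configuration $\bX$: this is a genuine translation-covariant functional over the convex increasing window $\Wna$, and since $\alpha_n|W_n|^{-1/d}\to0$ with $W_n$ convex one has $|\Wna|/|W_n|\to1$, so the two normalizations agree in the limit. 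One shows $|W_n|^{-1}|\widetilde\LPL_{\Wna,R_n}(\bX;\theta)-L_n^{\infty}(\bX;\theta)|\to0$ a.s.: a point $v\in\bX$ present in $\bX$ but absent from $\bX_{u,R_n}$ has $\|v-u\|\ge\min(\alpha_n,R_n)$, unless $v\notin W_n$ with $\|v-u\|<L$, which forces $u$ within distance $L$ of $\partial W_n$. Splitting $\Wna$ into the layer $\Wna\setminus(W_n\ominus L)$ — of relative volume $O(L|W_n|^{-1/d})\to0$, on which the error is controlled by comparing two ergodic averages with the same limit — and the bulk $\Wna\cap(W_n\ominus L)$ — on which, for $n$ large, every absent $v$ has $\|v-u\|\ge L$ so the per-$u$ error is at most $c\sum_{v\in\bX,\|v-u\|\ge L}\|v-u\|^{-\gamma_2}$ — and then letting $L\to\infty$, one kills the error using $\gamma_2>d$, \assg(iii), and the almost-sure bounds on point counts in annuli from Appendix~\ref{sec:auxiliary}. (ii) Apply the spatial ergodic theorem to $|W_n|^{-1}L_n^{\infty}(\bX;\theta)$, rewriting the point sum via the GNZ formula \eqref{eq:GNZ}; this gives $K(\theta)=\EE_{\theta^\star}[\varphi((\theta^\star)^\top t(0,\bX),\theta^\top t(0,\bX))]$ with $\varphi(a,b)=(b-a)e^{-a}+e^{-b}-e^{-a}$ (a conditional expectation given the invariant $\sigma$-field if $P_{\theta^\star}$ is not ergodic). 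Since $b\mapsto e^{-b}$ is strictly convex, $\varphi\ge0$ with equality iff $a=b$; hence $K\ge0$ and $K(\theta)=0$ forces $(\theta-\theta^\star)^\top t(0,\bX)=0$ $P_{\theta^\star}$-a.s., excluded by \eqref{eq:ident} unless $\theta=\theta^\star$. With convexity of $K$ inherited from the $K_n$, $\theta^\star$ is the unique minimizer and the argmin convergence above yields the claim.

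\emph{Main obstacle.} The genuinely new difficulty, absent in the finite-range literature, is substep (i): $\widetilde\LPL_{\Wna,R_n}$ is a triangular array rather than a stationary functional, so the ergodic theorem cannot be applied to it directly and one must show by hand that truncating the interaction at range $R_n$ and to $W_n$ perturbs the normalized criterion only by $o(1)$. The boundary-layer/bulk decomposition, the summability of interaction tails granted by $\gamma_2>d$, and the a.s.\ control of point counts in annuli for a superstable, lower-regular Gibbs process supported on $\Omega_T$ are where the real work lies; a secondary technical point is handling a non-ergodic $P_{\theta^\star}$ via the ergodic decomposition so that the limiting contrast retains a unique minimizer.
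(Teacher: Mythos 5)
Your proposal is correct and follows essentially the same route as the paper: convexity of the exponential-family contrast plus a Guyon-type argument for convex contrasts, a decomposition of the normalized criterion into the idealized infinite-range pseudolikelihood (handled by the GNZ formula and the Nguyen--Zessin ergodic theorem, yielding the same Bregman-divergence limit $K$) plus a truncation error, the latter controlled by a bulk/boundary-layer split using $\gamma_2>d$ and the moment lemmas. The only cosmetic difference is that you use a fixed cutoff $L$ followed by $L\to\infty$ where the paper inserts a slowly growing intermediate scale $\delta_n$; the two devices are interchangeable here.
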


\begin{proof}
By \assW~and the basic assumption on $\alpha_n$, we are ensured that $\Wna$ is a sequence of regular bounded domains of $\RR^d$ and that $|\Wna|\to \infty$ as $n\to \infty$.
Since any stationary Gibbs measure can be represented as a mixture of ergodic measures \citep{preston:76}, it is sufficient to prove consistency for ergodic measures. So, we assume here that $P_{\theta^\star}$ is ergodic. Since $\Theta$ is an open bounded set, and by convexity of $\theta \to -\widetilde\LPL_{\Wna,R_n}(\bx;\theta)$, then from \citet[Theorem 3.4.4]{guyon:95} we only need to prove that $K_n(\theta,\theta^\star)= |\Wna|^{-1} \left\{ \widetilde\LPL_{\Wna,R_n}(\bX;\theta^\star) - \widetilde \LPL_{\Wna,R_n}( \bX;\theta) \right\} \to K(\theta,\theta^\star)$ almost surely as $n\to \infty$, where $\theta\to K(\theta,\theta^\star)$ is a nonnegative function which vanishes at $\theta=\theta^\star$ only. We decompose $K_n(\theta,\theta^\star)$ as the sum of the three terms $T_{1}+T_2(\theta^\star)-T_2(\theta)$ where for any $\theta\in\Theta$
\begin{align*}
T_1&= |\Wna|^{-1} \left\{ \LPL_{\Wna}(\bX;\theta^\star) - \LPL_{\Wna}( \bX;\theta)\right\} \\
T_2(\theta)&= |\Wna|^{-1} \left\{ \widetilde\LPL_{\Wna,R_n}( \bX;\theta) - \LPL_{\Wna}( \bX;\theta)\right\}. 
\end{align*}
Lemma~\ref{lem:int} shows in particular that $\lambda_\theta(0, \bX)$ and $|\theta^\top t(0, \bX)|\lth(0, \bX)$ have finite expectation under  $P_{\theta^\star}$. Hence, using the ergodic theorem for spatial processes of \cite{nguyen:zessin:79}, we can follow the proof of \citet{mase:95} or the proof of \citet[Theorem~1]{billiot:coeurjolly:drouilhet:08} to prove that $T_1\to K(\theta,\theta^\star)$ almost surely as $n\to \infty$ where
\[ 
  K(\theta,\theta^\star) = \EE \left(
\lth(0,\bX) \left[
  e^{(\theta^\star-\theta)^\top t(0,\bX)} - \{1 + (\theta^\star-\theta)^\top t(0,\bX)\}
  \right]
  \right)
\]
which is a nonnegative function that vanishes at $\theta=\theta^\star$ only, under the identifiability condition~\eqref{eq:ident}.
So the rest of the proof consists in proving that $T_2(\theta)\to 0$ almost surely for any $\theta\in\Theta$. We have $T_2(\theta)=T_1^\prime + T_2^\prime$ where
\begin{align*}
  T_1^\prime & = |\Wna|^{-1}\sum_{u\in  \bX_{\Wna}} \theta^\top \left\{ t(u,\bX_{u,R_n}\setminus u)-t(u,\bX\setminus u)\right\} \\
  T_2^\prime & = |\Wna|^{-1}\int_{\Wna} \left\{ \lambda_\theta(u, \bX) -\lambda_\theta(u, \bX_{u,R_n})\right\}\dd u. \\
\end{align*}
Let us first look at  $T_1^\prime$. By boundedness of $\Theta$, it is sufficient to prove that  $|\Wna|^{-1}\sum_{u\in  \bX_{\Wna}}  \left\{t_m(u,\bX_{u,R_n}\setminus u)-t_m(u,\bX\setminus u)\right\}$ tends to 0  almost surely. 
Let $\delta_n$ be a sequence of real numbers such that $\delta_n\to \infty$ and $\delta_n |W_n|^{-1/d}\to 0$ as $n\to\infty$. For brevity, let $\bX_{(u,R_n)^c}:=\bX_{\RR^d\setminus \{W_n\cap B(u,R_n)\}}$.
\begin{align}\label{T1as}
&\sum_{u\in  \bX_{\Wna}}  \left\{t_m(u,\bX_{u,R_n}\setminus u)-t_m(u,\bX\setminus u)\right\} \nonumber\\
&= \sum_{u\in  \bX_{W_n\ominus \alpha_n}} \sum_{v\in \bX_{(u,R_n)^c}} g_m(v-u) \nonumber\\
&= \sum_{u\in  \bX_{W_n\ominus (\alpha_n+\delta_n)}} \sum_{v\in \bX_{(u,R_n)^c}} g_m(v-u) +  \sum_{u\in  \bX_{W_n\ominus \alpha_n\setminus W_n\ominus (\alpha_n+\delta_n)}} \sum_{v\in \bX_{(u,R_n)^c}} g_m(v-u)
\end{align}   
In the first sum above, $\|u-v\|\geq R_n\wedge (\alpha_n+\delta_n)$ and using the same arguments and the same notation as in (ii) of Lemma~\ref{lem:XWn} we get that the absolute value of this sum is lower than $c\, {(R_n\wedge (\alpha_n+\delta_n))^{-\gamma^\prime}} \sum_{u\in  \bX_{W_n\ominus (\alpha_n+\delta_n)}}H(u, \bX\setminus u)$ for some $\gamma'>0$. Hence
\begin{align*}
&|\Wna|^{-1} \left| \sum_{u\in  \bX_{W_n\ominus (\alpha_n+\delta_n)}} \sum_{v\in \bX_{(u,R_n)^c}} g_m(v-u) \right| \\
&\leq c\, {(R_n\wedge (\alpha_n+\delta_n))^{-\gamma^\prime}} \frac{|W_n\ominus (\alpha_n+\delta_n)|}{|\Wna|}  |W_n\ominus (\alpha_n+\delta_n)|^{-1}\!\!\! \sum_{u\in  \bX_{W_n\ominus (\alpha_n+\delta_n)}}H(u, \bX\setminus u).
\end{align*}   
By Lemma~\ref{lem:int}, the random variable $|H(0,\bX)| \lth(0,\bX)$ has finite expectation under $P_{\theta^\star}$. Moreover our assumptions ensure that $W_n\ominus (\alpha_n+\delta_n)$ is a sequence of regular bounded domains of $\RR^d$ with $|W_n\ominus (\alpha_n+\delta_n)|\to \infty$ as $n\to \infty$. So by the ergodic theorem  $ |W_n\ominus (\alpha_n+\delta_n)|^{-1}  \sum_{u\in  \bX_{W_n\ominus (\alpha_n+\delta_n)}} H(u,\bX\setminus u) \to \EE\left\{ H(0,\bX)\lth(0,\bX)\right\}$ almost surely whereby 
\begin{equation}\label{cvinter1}
|\Wna|^{-1} \left| \sum_{u\in  \bX_{W_n\ominus (\alpha_n+\delta_n)}} \sum_{v\in \bX_{(u,R_n)^c}} g_m(v-u) \right|  \to 0
\end{equation}
almost surely. For the second sum in \eqref{T1as}, using the notation  $|t_m|(u,\bx) = \sum_{v \in \bx} |g_m(v-u)|$ we have 
\begin{align*}
&|\Wna|^{-1}  \left|\sum_{u\in  \bX_{W_n\ominus \alpha_n\setminus W_n\ominus (\alpha_n+\delta_n)}} \sum_{v\in X_{(u,R_n)^c}} g_m(v-u) \right| \\
& \leq |\Wna|^{-1}  \sum_{u\in  \bX_{W_n\ominus \alpha_n\setminus W_n\ominus (\alpha_n+\delta_n)}} |t_m|(u,\bX)  \\
&=   |\Wna|^{-1}  \sum_{u\in  \bX_{W_n\ominus \alpha_n}}  |t_m|(u,\bX)  -  |\Wna|^{-1}  \sum_{u\in \bX_{W_n\ominus (\alpha_n+\delta_n)}} |t_m|(u,\bX) 
\end{align*}
which tends to 0 almost surely by application of the ergodic theorem, Lemma~\ref{lem:int} and since $|W_n\ominus(\alpha+\delta_n)|/|W_n\ominus \alpha_n| \to 1$ as $n\to \infty$.
Combining this result with \eqref{cvinter1} in \eqref{T1as} shows that $T_1^\prime\to 0$ almost surely.

Concerning the  remaining term $T_2^\prime$,  we have 
\begin{equation*}
T_2^\prime = |\Wna|^{-1} \int_{\Wna} \lambda_\theta(u, \bX) \left\{ 1 - e^{-\theta^\top  \sum_{v\in \bX_{(u,R_n)^c}} g(v-u)} \right\} \dd u.
\end{equation*}
We can use exactly the same decomposition as in \eqref{T1as} by introducing $\delta_n$, then use Lemmas~\ref{lem:XWn} and \ref{lem:int} to apply the ergodic theorem, leading to $T_2^\prime\to 0$ almost surely. These details are omitted. Hence $T_2(\theta)\to 0$ almost surely for any $\theta \in \Theta$ and the proof of Proposition~\ref{prop:cvps}  is completed.
\end{proof}


As a preliminary result towards  the asymptotic normality of $\widehat\theta_{\scriptsize\widetilde{\LPL}}$, we state in the next lemma general conditions on the sequences $\alpha_n$ and $R_n$ leading to the equivalence in probability of the score functions, up to $|W_n|^{1/2}$. 
\begin{lemma}\label{lemma_score}
Assume that  \assW~and \assg~hold. Let $\mathcal D$ be the set of all sequences $(\alpha_n,R_n)$ such that $\alpha_n |W_n|^{-1/d}\to 0$ and such that there exists $0<\gamma'<\gamma_2-d$ such that $\alpha_n^{-\gamma'} |W_n|^{1/2}\to 0$ and $R_n^{-\gamma'} |W_n|^{1/2}\to 0$.
Then, for any $(\alpha_n,R_n)\in\mathcal D$ and $(\alpha'_n,R'_n)\in\mathcal D$ we have
\begin{equation}\label{equivDn}
s_{\Wna,R_n}(\bX;\theta^*) - s_{W_n\ominus\alpha_n',R'_n}(\bX;\theta^*)  = o_P(|W_n|^{1/2}).
\end{equation}
\end{lemma}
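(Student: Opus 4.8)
The plan is to compare every member of $\mathcal D$ to a single reference, the (non-computable) untruncated full-window score $\bar s_\Delta(\bX;\theta):=\int_\Delta t(u,\bX)\lambda_\theta(u,\bX)\dd u-\sum_{u\in\bX_\Delta}t(u,\bX\setminus u)$ evaluated on $\Delta=W_n$, which uses the true configuration $\bX$ on $\RR^d$. By the triangle inequality it then suffices to prove, for each sequence $(\alpha_n,R_n)\in\mathcal D$, the equivalence $s_{\Wna,R_n}(\bX;\theta^*)-\bar s_{W_n}(\bX;\theta^*)=o_P(|W_n|^{1/2})$; subtracting this statement for $(\alpha'_n,R'_n)$ from the one for $(\alpha_n,R_n)$ cancels $\bar s_{W_n}$ and yields~\eqref{equivDn}. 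I would split the difference as $B_n+E_n$, where $B_n=s_{\Wna,R_n}(\bX;\theta^*)-\bar s_{\Wna}(\bX;\theta^*)$ isolates the effect of truncating the interaction at range $R_n$ on the fixed domain $\Wna$, and $E_n=\bar s_{\Wna}(\bX;\theta^*)-\bar s_{W_n}(\bX;\theta^*)$ the effect of the erosion. Since $\bar s$ is additive over disjoint domains, $E_n=-\bar s_{W_n\setminus(\Wna)}(\bX;\theta^*)$ is exactly the genuine GNZ innovation over the boundary shell $W_n\setminus(\Wna)$, hence centered by~\eqref{eq:GNZ}.

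For the bias term $B_n$ I would pass to absolute values inside the integral and the sum of~\eqref{defs}. For $u\in\Wna$, any point of $\bX$ not recorded in $\bX_{u,R_n}=\bX_{B(u,R_n)\cap W_n}$ lies at distance at least $\rho_n:=\alpha_n\wedge R_n$ from $u$ (points outside $W_n$ are at distance $\ge\alpha_n$; points of $W_n$ outside $B(u,R_n)$ at distance $\ge R_n$), so that $t(u,\bX)-t(u,\bX_{u,R_n})=\sum_v g(v-u)$ is a tail of the potential. As in the proof of Proposition~\ref{prop:cvps}, Lemma~\ref{lem:XWn}(ii) bounds its norm --- and, writing $\lambda_{\theta^*}(u,\cdot)=e^{-(\theta^*)^\top t(u,\cdot)}$ and using $|e^{-x}-e^{-y}|\le|x-y|\max(e^{-x},e^{-y})$, also $|\lambda_{\theta^*}(u,\bX)-\lambda_{\theta^*}(u,\bX_{u,R_n})|$ --- by $c\,\rho_n^{-\gamma'}$ times an integrable energy functional of $\bX$, for the exponent $\gamma'$ attached to the sequence. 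Taking expectations, using~\eqref{eq:GNZ} to rewrite the sum over $\bX_{\Wna}$ as an integral over $\Wna$, and invoking Lemma~\ref{lem:int} for the finiteness of the resulting moments under $P_{\theta^*}$, I obtain $\EE\|B_n\|\le c\,\rho_n^{-\gamma'}|\Wna|\le c\,\rho_n^{-\gamma'}|W_n|$. Because $\rho_n^{-\gamma'}\le\alpha_n^{-\gamma'}+R_n^{-\gamma'}$ and both $\alpha_n^{-\gamma'}|W_n|^{1/2}\to0$ and $R_n^{-\gamma'}|W_n|^{1/2}\to0$ by the definition of $\mathcal D$, this gives $\EE\|B_n\|=o(|W_n|^{1/2})$, hence $B_n=o_P(|W_n|^{1/2})$ by Markov's inequality.

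For the innovation term $E_n$, which is centered, I would control its variance. Applying the GNZ formula~\eqref{eq:GNZ} twice to $\bar s_D(\bX;\theta^*)$ with $D=W_n\setminus(\Wna)$ expresses $\EE\|\bar s_D(\bX;\theta^*)\|^2$ as $\int_D\EE[\|t(u,\bX)\|^2\lambda_{\theta^*}(u,\bX)]\dd u$ plus a double integral over $D\times D$; using $\lambda_{\theta^*}(u,\bX\cup v)=\lambda_{\theta^*}(u,\bX)e^{-\Phi_{\theta^*}(u-v)}$ and $t(u,\bX\cup v)=t(u,\bX)+g(v-u)$, the conditional centering makes the integrand of this double integral carry a factor $1-e^{-\Phi_{\theta^*}(u-v)}$ or $g(v-u)$. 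Off the diagonal this factor is $O(\|u-v\|^{-\gamma_2})$ by \assg(iii), which is integrable over $\RR^d$ away from the origin since $\gamma_2>d$; near the diagonal the repulsive core of \assg(i) makes $e^{-\Phi_{\theta^*}(u-v)}$ decay faster than any power of $\|u-v\|$, dominating the singularities of $g$. Together with the moment bounds of Lemma~\ref{lem:int}, both pieces are $O(|D|)$ with a constant independent of $D$, so $\Var(E_n)\le c\,|W_n\setminus(\Wna)|$. By \assW~and $\alpha_n|W_n|^{-1/d}\to0$ the shell obeys $|W_n\setminus(\Wna)|=o(|W_n|)$ (as already used in the proof of Proposition~\ref{prop:cvps}), whence $\Var(E_n)=o(|W_n|)$ and $E_n=o_P(|W_n|^{1/2})$ by Chebyshev's inequality. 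Adding the two estimates proves the equivalence and the lemma.

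The main obstacle lies in the two integrability statements abbreviated as ``integrable energy functional'' and ``moment bounds of Lemma~\ref{lem:int}'': since $\lambda_{\theta^*}$ is only integrable --- not bounded --- under $P_{\theta^*}$ and the potential has a long attractive tail, both the bound on $B_n$ and the double integral in $\Var(E_n)$ require the exponential-moment (superstability) estimates of \cite{ruelle:69} that underpin Lemma~\ref{lem:int}, not merely elementary majorizations. A secondary subtlety is that the variance bound for $E_n$ must hold with a volume-proportional constant: this rests on the decay $\gamma_2>d$ and the hard core, but on no mixing or correlation-decay hypothesis, the cancellation being produced by the conditional centering of the innovation --- in particular the stronger condition $\gamma_2>2d$, needed later for the central limit theorem, is not invoked here.
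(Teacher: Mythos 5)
Your proof is correct and follows essentially the same route as the paper's: both compare each member of $\mathcal D$ to a single GNZ-centered reference score, control the range- and window-truncation errors in $L^1$ via the tail bounds of Lemma~\ref{lem:XWn} and the moments of Lemma~\ref{lem:int} (yielding a bound of order $(\alpha_n\wedge R_n)^{-\gamma'}|W_n|=o(|W_n|^{1/2})$), and control the boundary-shell innovation in $L^2$ via the covariance estimates of Lemma~\ref{lem:cov}, giving a variance of order $|W_n\setminus(\Wna)|=o(|W_n|)$. The only (cosmetic) difference is the pivot: the paper uses $s'_{W_n,\infty}$, which keeps $t$ evaluated on $\bX_{W_n}$ but restores the true $\lth(u,\bX)$ in the integral, whereas you use the fully untruncated score $\bar s_{W_n}$; both are centered by the GNZ formula and your decomposition $B_n+E_n$ is just a regrouping of the paper's three terms $A+B+C$.
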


\begin{proof}
For $\Delta\Subset\RR^d$, we denote for short $s_{\Delta,R_n}:=s_{\Delta,R_n}(\bX;\theta^*)$ and set
\begin{equation}\label{sprime}
s'_{\Delta,R_n} =  \int_{\Delta} t(u,\bX_{u,R_n}) \lth(u,\bX) \dd u - \sum_{u\in \bX_{\Delta}} t(u,\bX_{u,R_n} \setminus u).\end{equation}
We prove below that for any $(\alpha_n,R_n)\in\mathcal D$, 
$s_{\Wna,R_n} - s'_{W_n,\infty} = o_P(|W_n|^{1/2})$, whereby \eqref{equivDn} is an immediate consequence. We have
\begin{equation}
s_{\Wna,R_n} - s'_{W_n,\infty} = A + B + C
\end{equation}
with $A=s_{\Wna,R_n} - s_{\Wna,\infty}$, $B=s_{\Wna,\infty} -s'_{\Wna,\infty}$ and $C=s'_{\Wna,\infty}-s'_{W_n,\infty}$.
Let us prove that each of these three terms is $o_P(|W_n|^{1/2})$.

First, we have $A= A_1+A_2$ with
\begin{align*}
A_1&=\int_{\Wna} \{t(u,\bX_{u,R_n}) \lth(u,\bX_{u,R_n}) -   t(u,\bX_{W_n}) \lth(u,\bX_{W_n})\}\dd u,\\
A_2 &= \sum_{u\in \bX_{\Wna}} \{t(u,\bX_{W_n} \setminus u)-t(u,\bX_{u,R_n} \setminus u)\}.
\end{align*}
Both terms above are handled similarly and we give the details for $A_2$ only. Denoting $A_{2,m}$
the $m$-th coordinate of $A_2$, we obtain using the same arguments and the same notation as in Lemma~\ref{lem:XWn} (ii) that for any $m$
\begin{align*}
|A_{2,m}|\leq \sum_{u\in \bX_{\Wna}} \sum_{v\in \bX_{W_n\setminus B(u,R_n)}} |g_m(v-u)| \leq R_n^{-\gamma'} \sum_{u\in \bX_{W_n}}  H(u,\bX).
\end{align*}
Applying the GNZ formula and Lemma~\ref{lem:int}, we deduce that $\EE|A_{2,m}| = \mathcal O(R_n^{-\gamma'} |W_n|)$ showing that $|W_n|^{-1/2} A_2=o_P(1)$. The same result holds for $A_1$ by the arguments developed in Lemma~\ref{lem:XWn}~(ii)-(iii).  

Second, 
\begin{equation*} B=\int_{\Wna} t(u,\bX_{W_n}) \{ \lth(u,\bX_{W_n}) - \lth(u,\bX) \} \dd u.\end{equation*}
Note that 
\begin{align*}
|\lambda_\theta(u,\bx)-\lambda_\theta(u,\bx_{W_n})| &= \lambda_\theta(u,\bx) \Big| 1-  e^{\sum_{v\in \bx_{W_n^c}}\Phi_\theta(v-u)} \Big|
\end{align*}
where in the last sum $\|v-u\|\geq \alpha_n$ since $u\in\Wna$. The same arguments as in  the proof of Lemma~\ref{lem:XWn}~(iii) thus apply and we get for all $u\in\Wna$, 
$|\lambda_\theta(u,\bx)-\lambda_\theta(u,\bx_{W_n})| \leq c\,  \alpha_n^{-\gamma'} e^{c\,G(u,\bx)} \lambda_\theta(u,\bx)  H(u,\bx)$. From Lemma~\ref{lem:XWn}~(i) and  Lemma~\ref{lem:int}, we obtain
\begin{equation*} 
\EE |B|=  \mathcal O(\alpha_n^{-\gamma'} |\Wna|) =  \mathcal O(\alpha_n^{-\gamma'} |W_n|)
\end{equation*} 
 and thus $|W_n|^{-1/2} B=o_P(1)$.

Third, 
\begin{align*}
C&=-\int_{W_n\setminus(\Wna)}t(u,\bX_{W_n}) \lth(u,\bX) \dd u  +\sum_{u\in \bX_{W_n\setminus(\Wna)}} t(u,\bX_{W_n} \setminus u)\\
&=-\sum_{j\in\mathcal J_n} s'_{\Delta_{n,j},\infty}
\end{align*}
where $\Delta_j$ is the unit cube centered at $j\in \ZZ^d$, $\Delta_{n,j}=\Delta_j \cap (W_n\setminus(\Wna))$ and $\mathcal J_n\subset \ZZ^d$ is the set such that $W_n\setminus(\Wna)=\cup_{j\in \mathcal J_n} \Delta_{n,j}$.
We have 
\begin{align*}
\Var(C)&=\sum_{j,k\in\mathcal J_n}\Cov(s'_{\Delta_{n,j},\infty} , s'_{\Delta_{n,k},\infty}).
\end{align*}
It is not difficult to check that all results of Lemma~\ref{lem:cov}~(i)-(iii) remain true if the ball $B_{u,n}=B(u,\alpha_n)$ therein is replaced by $W_n$, or in other words $s'_\Delta$  is replaced by $s'_{\Delta,\infty}$. Therefore, from~(iii) of Lemma~\ref{lem:cov}
\[ \|  \Var(C)\| = \mathcal O(|\mathcal J_n|)=\mathcal O(|W_n\setminus(\Wna)|)=\mathcal O(|W_n|^{1-1/d}\alpha_n)\]
showing that $|W_n|^{-1} \|\Var(C)\| \to 0$. Hereby $|W_n|^{-1/2} C=o_P(1)$ and the proof is completed.
\end{proof}

The next result establishes the asymptotic normality of the score function associated to  the modified pseudolikelihood 
 $\widetilde\LPL_{\Wna,R_n}(\bX;\theta)$ at the true value of the parameter $\theta=\theta^\star$, whenever $(\alpha_n,R_n)$ belongs to the set $\mathcal D$ introduced in Lemma~\ref{lemma_score}.  The proof relies on a new central limit theorem stated in Appendix~\ref{sec:clt}. As a consequence we deduce the asymptotic normality of $\widehat\theta_{\scriptsize\widetilde{\LPL}}$.

These results require the following notation: let $\Sigma_\infty$ and $U_\infty$ the $(p,p)$ matrices 
\begin{align}
\Sigma_\infty =& \EE  \big\{ t(0,\bX)t(0,\bX)^\top \lth (0,\bX)\big\} \nonumber\\
&+ \int_{\RR^d} \EE  \big\{ t(0,\bX)t(v,\bX)^\top \lth (0,\bX)
\lth (v,\bX)  \big\}  \{ 1-e^{-\Phi_{\theta^\star}(v)} \}\dd v\nonumber\\
&+ \int_{\RR^d} \EE  \big\{  \lth (0,\bX)
\lth (v,\bX)   \big\}  g(v)g(v)^\top e^{-\Phi_{\theta^\star}(v)}\dd v \label{eq:SigmaInfty}\\
U_\infty =& \EE  \big\{ t(0,\bX)t(0,\bX)^\top \lth (0,\bX)\big\}. \label{eq:UInfty}
\end{align}
These matrices   are indeed correctly defined, as \assg~implies on the one hand that all the expectations involved are uniformly bounded in $v$ by Lemmas~\ref{lem:XWn}-\ref{lem:int}, and on the other hand that  
\[
  \int_{\RR^d} |1-e^{-\Phi_{\theta^\star}(v)}|\dd v<\infty 
  \quad \mbox{ and } \quad
  \int_{\RR^d} \|g(v)g(v)^\top\| e^{-\Phi_{\theta^\star}(v)} \dd v<\infty.
\]
We denote by $\overset{d}{\to}$  the  convergence in distribution.

\begin{theorem}\label{thm:cvloi}
Under the assumptions of Lemma~\ref{lemma_score} with $\gamma_2>2d$ and the assumption that $\Sigma_\infty$ is a positive definite matrix, then  we have the two following convergences in distribution for any $(\alpha_n,R_n)\in\mathcal D$, as $n\to \infty$,\\
(i)
\[ |W_n|^{-1/2} s_{\Wna,R_n}(\bX;\theta^\star) \overset{d}{\to} \mathcal N(0,\Sigma_\infty),\] 
(ii)
\[
  |W_n|^{1/2} \left(\widehat\theta_{\scriptsize\widetilde{\LPL}}-\theta^\star \right) \stackrel{d}{\to} \mathcal N \left( 0, U_{\infty}^{-1} \Sigma_\infty U_\infty^{-1} \right).
\]
\end{theorem}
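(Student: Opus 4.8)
The plan is to deduce (i) from the central limit theorem for almost conditionally centered triangular arrays of random fields proved in Appendix~\ref{sec:clt}, applied to a decomposition of the score into contributions of unit cubes, and then to obtain (ii) from (i) by the usual linearization of the concave contrast, together with the spatial ergodic theorem and Slutsky's lemma. As in the proof of Proposition~\ref{prop:cvps}, since a stationary Gibbs measure is a mixture of ergodic measures we assume throughout that $P_{\theta^\star}$ is ergodic. By Lemma~\ref{lemma_score} it is enough to prove (i) for one conveniently chosen $(\alpha_n,R_n)\in\mathcal D$. Because $\gamma_2>2d$ we may fix $\gamma'\in(d,\gamma_2-d)$ and then take $\alpha_n=R_n$ with $|W_n|^{1/(2\gamma')}\ll R_n\ll|W_n|^{1/(2d)}$; this choice lies in $\mathcal D$ and ensures at once $R_n\to\infty$, $R_n^{-\gamma'}|W_n|^{1/2}\to0$, $\alpha_n|W_n|^{-1/d}\to0$ and $R_n=o(|W_n|^{1/(2d)})$.

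Next, partition $\RR^d$ into unit cubes $(\Delta_j)_{j\in\ZZ^d}$ and write $s_{\Wna,R_n}(\bX;\theta^\star)=\sum_{j\in I_n}Z_{n,j}$, where $I_n=\{j:\Delta_j\cap(\Wna)\neq\emptyset\}$ has $|I_n|=O(|W_n|)$ and
\[
Z_{n,j}=\int_{\Delta_j\cap(\Wna)}t(u,\bX_{u,R_n})\lth(u,\bX_{u,R_n})\dd u-\sum_{u\in\bX_{\Delta_j\cap(\Wna)}}t(u,\bX_{u,R_n}\setminus u),
\]
so that $Z_{n,j}$ depends only on $\bX$ inside $\Delta_j\oplus B(0,R_n+\sqrt d)$. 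We verify the hypotheses of the Appendix~\ref{sec:clt} theorem. \emph{Almost conditional centering:} replacing $\bX_{u,R_n}$ by $\bX$ in $Z_{n,j}$ yields, by the conditional GNZ formula~\eqref{condGNZ} with $\Lambda=\Delta_j\cap(\Wna)$, a quantity whose conditional expectation given the configuration outside $\Delta_j$ (hence, by the tower property, outside any larger region) vanishes; the truncation and erosion discrepancy is bounded exactly as the terms $A_1,A_2$ in the proof of Lemma~\ref{lemma_score} using \assg(iii) and Lemmas~\ref{lem:XWn}--\ref{lem:int}, which gives $\sum_{j\in I_n}\|\EE[Z_{n,j}\mid\bX_{\Delta_j^c}]\|=O_P(|W_n|R_n^{-\gamma'})=o_P(|W_n|^{1/2})$. \emph{Variance:} one must show $|W_n|^{-1}\Var(\sum_{j\in I_n}Z_{n,j})\to\Sigma_\infty$; by the previous bound and by Lemma~\ref{lemma_score} this reduces to computing the limit of $|W_n|^{-1}\EE[s^{\mathrm{GNZ}}_{W_n}(s^{\mathrm{GNZ}}_{W_n})^\top]$ for the unbiased GNZ score $s^{\mathrm{GNZ}}_{W_n}=\int_{W_n}t(u,\bX)\lth(u,\bX)\dd u-\sum_{u\in\bX_{W_n}}t(u,\bX\setminus u)$, which is obtained by applying~\eqref{eq:GNZ} twice: the diagonal part gives the first term of~\eqref{eq:SigmaInfty}, while the off-diagonal part, via the identity $\lth(u,\bX\cup v)=\lth(u,\bX)e^{-\Phi_{\theta^\star}(u-v)}$, stationarity and the change of variables $v\mapsto v-u$, produces the two integral terms, the passage to the limit being justified by the uniform bounds of Lemmas~\ref{lem:XWn}--\ref{lem:int} together with $\int_{\RR^d}|1-e^{-\Phi_{\theta^\star}(v)}|\dd v<\infty$ and $\int_{\RR^d}\|g(v)g(v)^\top\|e^{-\Phi_{\theta^\star}(v)}\dd v<\infty$; positive definiteness of $\Sigma_\infty$ is assumed. \emph{Negligibility:} \assg\ with $\gamma_2>2d$ and Lemmas~\ref{lem:XWn}--\ref{lem:int} provide $\sup_{n,j}\EE\|Z_{n,j}\|^{2+\delta}<\infty$ for some $\delta>0$, so that for an array of dependence range $R_n$ the Lindeberg/Lyapunov condition required in Appendix~\ref{sec:clt} amounts (grouping cubes into blocks of side $\asymp R_n$ and using the crude bound $\EE\|\sum_{\mathrm{block}}Z_{n,j}\|^{2+\delta}=O(R_n^{d(1+\delta)})$) to $R_n^{2d}=o(|W_n|)$, which holds by our choice of $R_n$. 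The Appendix~\ref{sec:clt} theorem then yields (i).

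For (ii): by Proposition~\ref{prop:cvps}, $\widehat\theta:=\widehat\theta_{\scriptsize\widetilde{\LPL}}\to\theta^\star$ almost surely, hence eventually lies in the interior of $\Theta$ and, by concavity of $\theta\mapsto\widetilde\LPL$, solves $s_{\Wna,R_n}(\bX;\widehat\theta)=0$. Writing $s_n(\theta)=s_{\Wna,R_n}(\bX;\theta)$ and $V_n(\theta)=-\tfrac{\mathrm d}{\mathrm d\theta^\top}s_n(\theta)$, which by~\eqref{eq:derivScore} equals $\int_{\Wna}t(u,\bX_{u,R_n})t(u,\bX_{u,R_n})^\top\lambda_\theta(u,\bX_{u,R_n})\dd u$, the mean value form of the first-order expansion gives
\[
|W_n|^{1/2}(\widehat\theta-\theta^\star)=\Big(|W_n|^{-1}\int_0^1 V_n\big(\theta^\star+\tau(\widehat\theta-\theta^\star)\big)\dd\tau\Big)^{-1}|W_n|^{-1/2}s_n(\theta^\star).
\]
It then suffices to show $|W_n|^{-1}V_n(\bar\theta_n)\to U_\infty$ in probability whenever $\bar\theta_n\to\theta^\star$ a.s.: replacing $\bX_{u,R_n}$ by $\bX$ costs $o_P(|W_n|)$ (exactly as for the term $T_2'$ in the proof of Proposition~\ref{prop:cvps}, using $R_n\to\infty$, \assg(iii) and Lemmas~\ref{lem:XWn}--\ref{lem:int}); replacing $\bar\theta_n$ by $\theta^\star$ costs $o_P(|W_n|)$ by continuity of $\theta\mapsto t(u,\bX)t(u,\bX)^\top\lambda_\theta(u,\bX)$, the domination of Lemma~\ref{lem:int} and $\bar\theta_n\to\theta^\star$; and the spatial ergodic theorem \cite{nguyen:zessin:79} applied to the integrable field $u\mapsto t(u,\bX)t(u,\bX)^\top\lth(u,\bX)$ gives the limit $U_\infty$. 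Since $U_\infty$ is invertible under the identifiability condition~\eqref{eq:ident} (which is in force, being already needed for the consistency of $\widehat\theta$), combining this with (i) and Slutsky's lemma, and using the symmetry of $U_\infty$, yields $|W_n|^{1/2}(\widehat\theta-\theta^\star)\overset{d}{\to}\mathcal N(0,U_\infty^{-1}\Sigma_\infty U_\infty^{-1})$.

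The main obstacle is the verification of the hypotheses of the new CLT in part (i): on the one hand identifying the \emph{exact} limiting covariance~\eqref{eq:SigmaInfty} through the two-fold GNZ second-moment computation, with the boundary and truncation discrepancies shown to be $o(|W_n|)$; and, above all, choosing the auxiliary range $R_n$ so as to meet two competing requirements simultaneously — large enough ($R_n^{\gamma'}\gg|W_n|^{1/2}$) that the truncation bias summed over $O(|W_n|)$ cubes is $o_P(|W_n|^{1/2})$, yet small enough ($R_n=o(|W_n|^{1/(2d)})$) that the moment/Lindeberg condition of the new CLT holds. This balance is possible precisely because of the quantitative strengthening $\gamma_2>2d$, and it is the reason the asymptotic normality remains open when $d<\gamma_2<2d$.
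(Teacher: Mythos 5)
Your proposal is correct and follows essentially the same route as the paper: reduce via Lemma~\ref{lemma_score} to a single well-chosen $\alpha_n=R_n$, decompose the score over unit cubes, verify the moment, covariance, variance-limit and almost-conditional-centering hypotheses of the Appendix~\ref{sec:clt} theorem (the centering via the conditional GNZ formula and Lemmas~\ref{lem:XWn}--\ref{lem:int}, the limit $\Sigma_\infty$ via the two-fold GNZ computation of Lemma~\ref{lem:cov}), and then obtain (ii) by the standard Taylor/Slutsky argument with Hessian limit $U_\infty$. Two cosmetic imprecisions, both absorbed by the openness of your admissible window for $R_n$: the paper's CLT actually requires $R_n^{(2+\varepsilon)d}=o(|W_n|)$ for some $\varepsilon>0$ (its condition $\alpha_n^{\frac{4q-1}{2q-1}d}=o(|\mathcal I_n|)$) rather than your borderline $R_n^{2d}=o(|W_n|)$, and the $Z_{n,j}$ must be centered at their (nonzero) expectations, with the resulting deterministic bias $\EE(s_{\Wna})$ shown separately to be $o(|W_n|^{1/2})$ --- which your conditional-centering bound already delivers.
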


Some remarks on this theorem are in order. 
The condition  $\gamma_2>2d$ is clearly the most restrictive one. Nonetheless it includes the standard Lennard-Jones model in dimension $d=2$ for which $\gamma_2=6$. Under \assg, existence of the model is ensured if $\gamma_2>d$ but it remains an open problem to prove the asymptotic normality of the pseudolikelihood estimator when $d<\gamma_2<2d$. 
Concerning the set $\mathcal D$ of possible sequences $(\alpha_n,R_n)$, it includes the natural choices $(\alpha_n,R_n)=(\alpha_n,\alpha_n)$ and $(\alpha_n,R_n)=(\alpha_n,\infty)$ discussed in Section~\ref{sec:methodology}, provided $\alpha_n$ tends to infinity at a good rate. However,  $\mathcal D$ does not include the particular case $(\alpha_n,R_n)=(0,\infty)$, whereas this choice leads to a consistent estimator as proved in Proposition~\ref{prop:cvps}. In fact, when the erosion parameter $\alpha_n$ does not tend to infinity, some edge effects occur due to the infinite range of the process. These edge effects are negligible with respect to  $|W_n|$ but not with respect to $ |W_n|^{1/2}$. 
Finally, following  \cite{coeurjolly:rubak:13}, it is possible to construct a fast estimator of the asymptotic covariance matrices $\Sigma_\infty$ and $U_\infty$, but its asymptotic properties are out of the scope of the present paper.

\begin{proof}
(i) Since $\gamma_2>2d$, there exists $\varepsilon>0$ and $a>0$ such that 
\begin{equation}
  \label{eq:a}
  \frac{d}{2(\gamma_2-d-\varepsilon)}<a<\frac1{2+\varepsilon}.
\end{equation}
For such $\varepsilon,a$, we let $\gamma^\prime=\gamma_2-d-\varepsilon$ and consider the particular case $R_n=\alpha_n= |W_n|^{a/d}$. Then, $\alpha_n |W_n|^{-1/d}\to 0$ and $\alpha_n^{-\gamma'} |W_n|^{1/2} = R_n^{-\gamma'} |W_n|^{1/2} = |W_n|^{\frac{d/2-a(\gamma_2-d-\varepsilon)}d}\to 0$.
 This particular choice of $(\alpha_n, R_n)$ thus belongs to $\mathcal D$. From Lemma~\ref{lemma_score}, if we prove~(i) for this choice, then the same convergence holds true for all sequences $(\alpha_n,R_n)$ in $\mathcal D$,  completing the proof of~(i). Henceforth we let $R_n=\alpha_n=|W_n|^{a/d}$ where $a$ is such that~\eqref{eq:a} holds. 

Denoting by $\Delta_j$ the unit cube centered at $j\in \ZZ^d$, we let $\Delta_{n,j}=\Delta_j \cap (\Wna)$ and $\mathcal I_n\subset \ZZ^d$ the set such that $\Wna=\cup_{j\in \mathcal I_n} \Delta_{n,j}$.  At several places in the proof the sequence $\rho_n=|\mathcal I_n|^{1/2}/\alpha_n^{\gamma^\prime}$ is involved. Then,
\begin{equation}\label{eq:rhon}
  \rho_n =\frac{|\mathcal I_n|^{1/2}}{\alpha_n^{\gamma^\prime}} = \mathcal O \left\{|W_n|^{\frac{d/2-a(\gamma_2-d-\varepsilon)}d} \right\}  
\end{equation}
tends to 0.

We write for short $s_{\Delta_{n,j}}=s_{\Delta_{n,j},R_n}(\bX;\theta^\star)=s_{\Delta_{n,j},\alpha_n}(\bX;\theta^\star)$ and $B_{u,n}=B(u,\alpha_n)$. Note that for any $u\in\Wna$,  $t(u,\bX_{u,R_n})=t(u,\bX_{u,\alpha_n})=  t(u,\bX_{B_{u,n}})$ and similarly $\lth(u,\bX_{u,R_n})=\lth(u,\bX_{u,\alpha_n})=\lth(u,\bX_{B_{u,n}})$. Therefore for any $j\in\mathcal I_n$, 
\[s_{\Delta_{n,j}}=\int_{\Delta_{n,j}} t(u,\bX_{B_{u,n}}) \lambda_\theta(u,\bX_{B_{u,n}}) \dd u - \sum_{u\in \bX_{\Delta_{n,j}}} t(u,\bXu \setminus u).\]
Letting $Z_{n,j}=s_{\Delta_{n,j}} - \EE (s_{\Delta_{n,j}})$, we have 
\[
  s_{\Wna}= S_n+ \EE(s_{\Wna})
\]
where $S_n= \sum_{j \in \mathcal I_n} Z_{n,j}$. Define 
\[
  \widehat\Sigma_n = \sum_{j \in \mathcal I_n} \sum_{
  \begin{subarray}{c}
  k \in \mathcal I_n \\ |k-j|\leq \alpha_n
  \end{subarray}} Z_{n,j} Z_{n,k}^\top \qquad \mbox{ and } \qquad \Sigma_n = \EE \widehat\Sigma_n. 
\]
The proof of (i) is completed if we show that $\Sigma_n^{-1/2} S_n \overset{d}{\to} \mathcal N(0,I_p)$,  $\Sigma_n^{-1/2} \EE(s_{\Wna})\to 0$ and $|W_n|^{-1} \Sigma_n\to \Sigma_\infty$. Let us prove the first convergence by application of Theorem~\ref{tcl1}. 

By \assW~and the definition of $\mathcal I_n$, we have $|\mathcal I_n|=\mathcal O(|W_n|)$, see e.g.  \citet[Lemma A.1]{coeurjolly:moeller:14}. 
From~\eqref{eq:a}, $\alpha_n^{(2+\varepsilon)d}=o(|\mathcal I_n|)$  which, following the remark after Theorem~\ref{tcl1}, satisfies the assumption of Theorem~\ref{tcl1} if Assumption~(a) of this theorem is satisfied for any $q\geq 1$. And the latter holds by definition of $Z_{n,j}$ and Lemma~\ref{moments}.

 Concerning assumption~(b), we use for short the notation $s'_{\Delta} = s'_{\Delta,\alpha_n}$ for any $\Delta\Subset\RR^d$, see~\eqref{sprime}, namely
\[s'_{\Delta} =  \int_{\Delta} t(u,\bX_{B_{u,n}}) \lth(u,\bX) \dd u - \sum_{u\in \bX_{\Delta}} t(u,\bX_{B_{u,n}} \setminus u).\]\
 Note that from the GNZ formula $\EE s'_{\Delta} =0$. We have from Lemma~\ref{lem:cov}, for any sequence $\mathcal J_n\subset \mathcal I_n$ such that $|\mathcal J_n|\to \infty$,
\begin{align*}
& \sum_{
 \begin{subarray}{c}
 j,k\in \mathcal J_n  
 \end{subarray}
 } \big\| 
\EE ( Z_{n,j}Z_{n,k}^\top)
  \big\|  = \sum_{
 \begin{subarray}{c}
 j,k\in \mathcal J_n  
 \end{subarray}
 } \big\| 
\Cov( s_{\Delta_{n,j}} ,s_{\Delta_{n,k}})
  \big\| \\ 
  & \leq    \sum_{
 \begin{subarray}{c}
 j,k\in \mathcal J_n
 \end{subarray}
 } \big\| 
\Cov( s_{\Delta_{n,j}} ,s_{\Delta_{n,k}}) -\Cov( s'_{\Delta_{n,j}} ,s'_{\Delta_{n,k}})
  \big\|  + 
  \sum_{
 \begin{subarray}{c}
 j,k\in \mathcal J_n
 \end{subarray}
 } \big\| 
\Cov( s'_{\Delta_{n,j}} ,s'_{\Delta_{n,k}})
  \big\|\\
  &  \leq    \sum_{
 \begin{subarray}{c}
 j,k\in \mathcal J_n
 \end{subarray}
 }  \left( \frac{c}{\alpha_n^{\gamma^\prime} (1+|k-j|^{\gamma_2})}  +
\frac{c}{\alpha_n^{2\gamma^\prime}}\right)
 + \sum_{
 \begin{subarray}{c}
 j,k\in \mathcal J_n  \\
 |j-k|\leq 2r_0
 \end{subarray}
 } \!\!\!\big\| 
\Cov( s'_{\Delta_{n,j}} ,s'_{\Delta_{n,k}})
  \big\| + \sum_{
 \begin{subarray}{c}
 j,k\in \mathcal J_n  \\
 |j-k|> 2r_0
 \end{subarray}
 } \!\!\!\!c |k-j|^{-\gamma_2} \\
 & \leq c\, \alpha_n^{-\gamma^\prime} |\mathcal J_n| +
c |\mathcal J_n| \rho_n^2
 + c   |\mathcal J_n| \|\Var( s'_{\Delta_{n,j}})\|  + c  |\mathcal J_n|
 \end{align*}
which is $\mathcal O( |\mathcal J_n| )$ by Lemma~\ref{moments} and from~\eqref{eq:rhon}. 

Since $\Sigma_\infty$ is  assumed to be a positive definite matrix, assumption~(c)  holds if we prove  that $|\mathcal I_n|^{-1} \Sigma_n \to \Sigma_\infty$ as $n\to \infty$. For this, let 
\[
\Sigma_n'=\Var(s'_{\Wna}) =  \sum_{ 
  \begin{subarray}{c}
j,k \in \mathcal I_n
  \end{subarray}} \EE \left\{ s'_{\Delta_{n,j}} {(s'_{\Delta_{n,k}})}^\top \right\}
\quad \mbox{ and } \quad 
   \widehat\Sigma_n'  = \sum_{
  \begin{subarray}{c}
  j,k \in \mathcal I_n \\ |k-j|\leq \alpha_n
  \end{subarray}} s'_{\Delta_{n,j}} {(s'_{\Delta_{n,k}})}^\top.  
\]
We have $ \|\; |\mathcal I_n|^{-1} \Sigma_n -\Sigma_\infty\| \leq T_1 + T_2 + T_3 $ 
where 
\begin{align*}
T_1  &= |\mathcal I_n|^{-1} \|\EE(\widehat \Sigma_n) - \EE(\widehat \Sigma_n') \|, \quad
T_2  = |\mathcal I_n|^{-1} \| \EE(\widehat \Sigma_n')-\Sigma_n' \|,\quad
T_3  = \| |\mathcal I_n|^{-1} \Sigma_n'  - \Sigma_\infty\|.
\end{align*}
First  applying Lemma~\ref{lem:cov}
\begin{align*}
T_1 & \leq   |\mathcal I_n|^{-1}  \sum_{j \in \mathcal I_n} \sum_{
  \begin{subarray}{c}
  k \in \mathcal I_n \\ |k-j|\leq \alpha_n    \end{subarray}} \| \EE (Z_{n,j} Z_{n,k}^\top) - \EE\{s'_{\Delta_{n,j}} {(s'_{\Delta_{n,k}})}^\top\} \| \\
  &  =  |\mathcal I_n|^{-1}   \sum_{j \in \mathcal I_n} \sum_{
  \begin{subarray}{c}
  k \in \mathcal I_n \\ |k-j|\leq \alpha_n    \end{subarray}} \big\| 
\Cov( s_{\Delta_{n,j}} ,s_{\Delta_{n,k}}) -\Cov( s'_{\Delta_{n,j}} ,s'_{\Delta_{n,k}})
  \big\| \\
  & = \mathcal O(\alpha_n^{-\gamma^\prime}) + \mathcal O(\alpha_n^{d-2\gamma^\prime})
  \end{align*}
and $T_1\to 0$ since $\gamma_2>2d$ implies $2\gamma^\prime-d>0$ as soon as $\varepsilon<d/2$, which can be assumed without loss of generality. Second, 
from (iii) in Lemma~\ref{lem:cov}
\begin{align*}
 T_2 & \leq  |\mathcal I_n|^{-1}  \sum_{j \in \mathcal I_n} \sum_{
  \begin{subarray}{c}
  k \in \mathcal I_n \\ |k-j|> \alpha_n    \end{subarray}} \| \EE\{s'_{\Delta_{n,j}} {(s'_{\Delta_{n,k}})}^\top\} \| \\
  & \leq c\,  |\mathcal I_n|^{-1}  \sum_{j \in \mathcal I_n} \sum_{
  \begin{subarray}{c}
  k \in \mathcal I_n \\ |k-j|> \alpha_n    \end{subarray}}  \frac{1}{|k-j|^{\gamma_2}} \leq c \sum_{|i|>\alpha_n} \frac{1}{|i|^{\gamma_2}} = o(1).  \end{align*}
  Finally $T_3\to 0$ from (ii) in Lemma~\ref{lem:cov}, which concludes the proof of condition~(c) of Theorem~\ref{tcl1}.

To prove assumption~(d), we apply the conditional GNZ formula \eqref{condGNZ} to write,  for any $j\in \mathcal I_n$, 
\begin{align*}
\EE \big( Z_{n,j} \mid \,& \bX_{\Delta_{n,k}},k\neq j \big) \\
&=
\EE  \left[\int_{\Delta_{n,j}} t(u,\bX_{B_{u,n}}) \left\{ 
\lth(u,\bX)-\lth(u,\bX_{B_{u,n}})
\right\} \dd u \mid \, \bX_{\Delta_{n,k}}, k\neq j \right]
\\
& \quad - \EE  \int_{\Delta_{n,j}} t(u,\bX_{B_{u,n}})\left\{ 
\lth(u,\bX)-\lth(u,\bX_{B_{u,n}})
\right\} \dd u .
\end{align*}
From Lemma~\ref{lem:XWn}, we have for any $u\in \RR^d$ and $\bx \in \Omega$
\[
  \|t(u,\bx_{B_{u,n}})\|\, |
  \lth(u, \bx) -\lth(u,\bx_{B_{u,n}}) | 
  \leq \frac{c}{\alpha_n^{\gamma^\prime}} Y(u,\bx)
\]
where $Y(u,\bx)=\|\{|t_m|(u,\bx)\}_{m\geq 1}\|H(u,\bx)e^{c\, G(u,\bx)}\lth(u,\bx)$ using the notation of the lemma. 
Since $|\Delta_{n,j}|\leq 1$, we deduce from the stationarity of $\bX$ and Lemma~\ref{lem:int} that
\[
  \EE \left\|
\EE \big( Z_{n,j} \mid \,\bX_{\Delta_{n,k}}, k\neq j \big)
\right\| \leq \frac{c}{\alpha_n^{\gamma^\prime}} \EE \{Y(0,\bX)\} = \mathcal O(\alpha_n^{-\gamma^\prime}).
\]
Hence
\[
 |\mathcal I_n|^{-1/2}\sum_{j\in\mathcal I_n}\EE\| \EE 
 \big( Z_{n,j} \mid \, \bX_{\Delta_{n,k}}, k\neq j
 \big)\| 
 = \mathcal O(\rho_n)
\]
tends to 0 from~\eqref{eq:rhon}. All conditions of Theorem~\ref{tcl1} are therefore satisfied, which yields that $\Sigma_n^{-1/2} S_n \overset{d}{\to} \mathcal N(0,I_p)$. The convergence $|W_n|^{-1} \Sigma_n\to \Sigma_\infty$ is an immediate consequence of  assumption (c) checked above. It remains to prove that \linebreak$\Sigma_n^{-1/2} \EE(s_{\Wna})\to 0$. This is a consequence of the GNZ formula, \eqref{ell},  Lemma~\ref{lem:XWn} and the condition~\eqref{eq:a} since
\begin{align*}
\| \Sigma_n^{-1/2} \EE(s_{\Wna}) \| & \leq \| \Sigma_n^{-1/2} \| \, \| \EE (s_{\Wna}) \| \\
&\leq c\, |\mathcal I_n|^{-1/2} \alpha_n^{-\gamma^\prime} \EE \int_{\Wna} Y(u,\bX) \dd u  =\mathcal O \left(\rho_n \right)=o(1).
\end{align*}

(ii) It is worth repeating that $\theta \to -\widetilde\LPL_{\Wna,R_n}(\bx;\theta)$ is a convex function with Hessian matrix  given by \eqref{eq:derivScore}. Following Lemmas~\ref{lem:XWn}-\ref{lem:int} and arguments developed in the proof of Proposition~\ref{prop:cvps}, we leave the reader to check that almost surely
\[
|\Wna|^{-1} \left\{ \frac{\dd}{\dd \theta\dd \theta^\top} \widetilde \LPL_{\Wna,R_n}(\bX;\theta) - 
\frac{\dd}{\dd \theta\dd \theta^\top}  \LPL_{\Wna}(\bX;\theta) \right\} \to 0  
\]
and
\[
 - |\Wna|^{-1} \frac{\dd}{\dd \theta\dd \theta^\top}  \LPL_{\Wna}(\bX;\theta) \to \EE
 \left\{t(0,\bX)t(0,\bX)^\top \lambda_\theta(0,\bX) \right\}
\]
as $n\to \infty$, which equals to $U_\infty$ when $\theta=\theta^\star$. We also note that~\eqref{eq:ident} implies that $U_\infty$ is a positive definite matrix. These facts and (i) allow us to apply \citet[Theorem 3.4.5]{guyon:95} to deduce the result.
\end{proof}

The following proposition focuses on the maximum logistic regression and states its strong consistency and asymptotic  normality.  The result is given without proof, but we claim that it follows by the same arguments as those involved in the proofs of  Proposition~\ref{prop:cvps} and Theorem~\ref{thm:cvloi}.

\begin{proposition} \label{prop:logistic}
  Under the assumptions of Proposition~\ref{prop:cvps}, the maximum logistic regression estimator defined by
  \[
\widehat\theta_{\scriptsize\widetilde{\LRL}} = \mathrm{argmax}_{\theta\in \Theta} \widetilde{\LRL}_{\Wna}(\bX;\theta)  
\] 
converges almost surely to $\theta^\star$ as $n\to \infty$ and under the assumptions of Theorem~\ref{thm:cvloi} it satisfies the following convergence in distribution
\[
  |W_n|^{1/2} \left(\widehat\theta_{\scriptsize\widetilde{\LRL}}-\theta^\star \right) \stackrel{d}{\to} \mathcal N \left( 0, V_{\infty}^{-1} \Gamma_\infty V_\infty^{-1} \right)
\]
where denoting $h(u,\bx)=\rho \,t(u,\bx)/\{\lth(u,\bx)+\rho\}$ for any $u\in \RR^d, \bx\in \Omega$,
\begin{align*}
\Gamma_\infty &=  \EE \left\{ h(0,\bX) h(0,\bX)^\top \lth(0,\bX)
\right\} \\
&\qquad + \int_{\RR^d} \EE  \left\{ 
h(0,\bX)h(v,\bX)^\top \lth(0,\bX)\lth(v,\bX)\right\}  \{ 1-e^{-\Phi_{\theta^\star}(v)} \}\dd v\nonumber\\
&\qquad+ \int_{\RR^d} \EE  \left\{  \lth (0,\bX)
\lth (v,\bX)    \Delta_v h(0,\bX)\Delta_0 h(v,\bX)^\top \right\}e^{-\Phi_{\theta^\star}(v)}\dd v \\
V_\infty &= \frac1\rho \EE \left\{ h(0,\bX)h(0,\bX)^\top \lth(0,\bX)
\right\}
\end{align*}
with $\Delta_v h(u,\bx)= h(u,\bx\cup v)-h(u,\bx)$ for any $u,v\in \RR^d$ and $\bx\in \Omega$.
\end{proposition}


\section{Simulation study} \label{sec:sim}

In this section, we present results of simulation experiments assessing the performance estimation of maximum pseudolikelihood estimators in the Lennard-Jones model, given by
\begin{equation}
  \label{LJ}
  \log \lambda_\theta(u,\bx) = \log(\beta) - \sum_{v \in \bx } \Phi (v-u) 
  \quad \mbox{ with } \quad
  \Phi(u) = 4\varepsilon \left\{ \left(\frac{\sigma}{\|u\|} \right)^6 - \left(\frac{\sigma}{\|u\|} \right)^{12} \right\}.
\end{equation}
We chose $\beta=100$ and $\sigma=0.1$ and considered three cases where $\varepsilon$ takes the values $0.1, 0.5$ and $1$ respectively, which, following \citet{baddeley:dereudre:13} we call low, moderate and high rigidity models. The realizations are generated using the Metropolis-Hastings algorithm, implemented in the \texttt{R} package \texttt{spatstat} \citep{baddeley:turner:05,baddeley:rubak:turner:15},  
on $W_n=[-n,n]^2$ and for $n=1/2,1,2$. To take into account the infinite range characteristic of the Lennard-Jones model, the processes are simulated on $[-n-2,n+2]^2$ and then clipped to $W_n$. Figure~\ref{fig:LJ} depicts some typical realizations on $[-1,1]^2$. 

For each model and each observation window, we considered three versions of maximum pseudolikelihood estimators given by~\eqref{lpl} of the parameter vector $\{\log(\beta),\sigma,\varepsilon\}^\top$: (i) $\alpha_n=R_n \in [0.05,0.3]$, (ii) $\alpha_n \in [0.05,0.3]$ and $R_n=\infty$, (iii) $\alpha_n=0$, $R_n=\infty$. We remind that the values $\alpha_n=0$ and $R_n=\infty$ respectively mean that no border erosion is considered (i.e. $W_n \ominus \alpha_n = W_n$) and the maximal possible range of interaction in $W_n$ is taken into account
(i.e. $\lambda(u,\bx_{u,R_n}) = \lambda(u,\bx_{W_n})$). Writing $\alpha_n$ and/or $R_n \in [0.05,0.3]$ means that we evaluated the estimates for 30 values regularly sampled in $[0.05,0.3]$.

We computed the pseudolikelihood estimator by using a $100\times 100$ grid of quadrature points to discretize the integral involved in~\eqref{lpl}. 
We did not use the  Berman-Turner approximation, implemented in \texttt{spatstat} for a large class of models excluding~\eqref{LJ} (see \citet{baddeley:turner:00}), because the latter may artificially lead to biased estimates for very repulsive patterns. 
As suggested by \cite{baddeley:dereudre:13}, to minimise numerical problems (overflow, instability, slow convergence) we rescaled the interpoint distances to a unit equal to the true value of~$\sigma$.

We define the weighted mean squared error $\mathrm{WMSE}$ by
\[
  \mathrm{WMSE} = 
\frac{\widehat E \left\{ \left(\widehat{\log \beta} - \log \beta\right)^2\right\}}{(\log \beta)^2} +
\frac{\widehat E \left\{ \left(\widehat{\sigma} - \sigma\right)^2\right\}}{\sigma^2} +
\frac{\widehat E \left\{ \left(\widehat{\varepsilon} - \varepsilon\right)^2\right\}}{\varepsilon^2}
\]
and we consider in the following its root  $\mathrm{RWMSE}=\sqrt{\mathrm{WMSE}}$. Similarly, we define the root-weighted squared bias  and the root-weighted variance respectively denoted by $\mathrm{RWSQ}$ and $\mathrm{RWV}$.

Tables~\ref{tab:rmse} and \ref{tab:bias} summarize the simulation study based on 100 replications, where we report the values of ${\mathrm{RWMSE}}$, ${\mathrm{RWSB}}$ and ${\mathrm{RWV}}$.
When $\alpha_n$ and/or $R_n$ vary, we report in Table~\ref{tab:rmse} the smallest value of  $\mathrm{RWMSE}$ and the associated  value $\alpha_{\mathrm{opt}}$ of $\alpha_n$ between brackets. To be consistent, we report in  Table~\ref{tab:bias}  the values of ${\mathrm{RWSB}}$ and ${\mathrm{RWV}}$ associated to $\alpha_{\mathrm{opt}}$.
We observe that the three versions of the estimates have a $\mathrm{RWMSE}$ decreasing with $n$ for the three Lennard-Jones models. In case (i) where $\alpha_n=R_n$, the optimal value seems to be around $\alpha_n=0.15$. A closer look at the estimates showed us that their average behavior (sample mean and standard deviation) fluctuate quite a lot with $\alpha_n$. In case (ii) where $R_n=\infty$, we observed that the biases of the estimates do not fluctuate that much with $\alpha_n$. Since the estimates had smaller standard deviation when the amount of information is maximal, i.e. when $\alpha_n$ is low, this explains why the smallest value of $\alpha_n$ led in almost all cases to the smallest ${\mathrm{RWMSE}}$. Surprisingly, the third situation corresponding to $\alpha_n=0$ and $R_n=\infty$ produced very interesting results which are optimal or close to the optimal ones in all cases considered. This estimator may be very time consuming to evaluate for very large datasets since all the points are involved in the evaluations of the Papangelou conditional intensity. Nonetheless, for the setting considered in this simulation study the computational time differences were negligible. The situation $\alpha_n=0$ and $R_n=\infty$ is supported by Proposition~\ref{prop:cvps} (consistency) but not by Theorem~\ref{thm:cvloi} (asymptotic normality).  However, the normal QQ-plots  in Figures~\ref{fig:qqnorm1}-\ref{fig:qqnorm3} seem to show a convergence to a Gaussian behavior for all our choices of $\alpha_n$ and $R_n$, with approximatively the same rate of convergence, i.e. $|W_n|^{-1/2}$, if we refer to the decreasing rate of the slopes in each QQ-plot. Note that the Gaussian behavior is less clear in the low rigidity Lennard-Jones model than in the moderate and high rigidity cases, but this seems specific to the model rather than to the estimators.
In conclusion, to estimate the parameters of a Lennard-Jones model using the pseudolikelihood method, we recommend to use no erosion and no finite range correction.

\begin{figure}[htbp]
  \begin{tabular}{lll}
  \includegraphics[scale=.5]{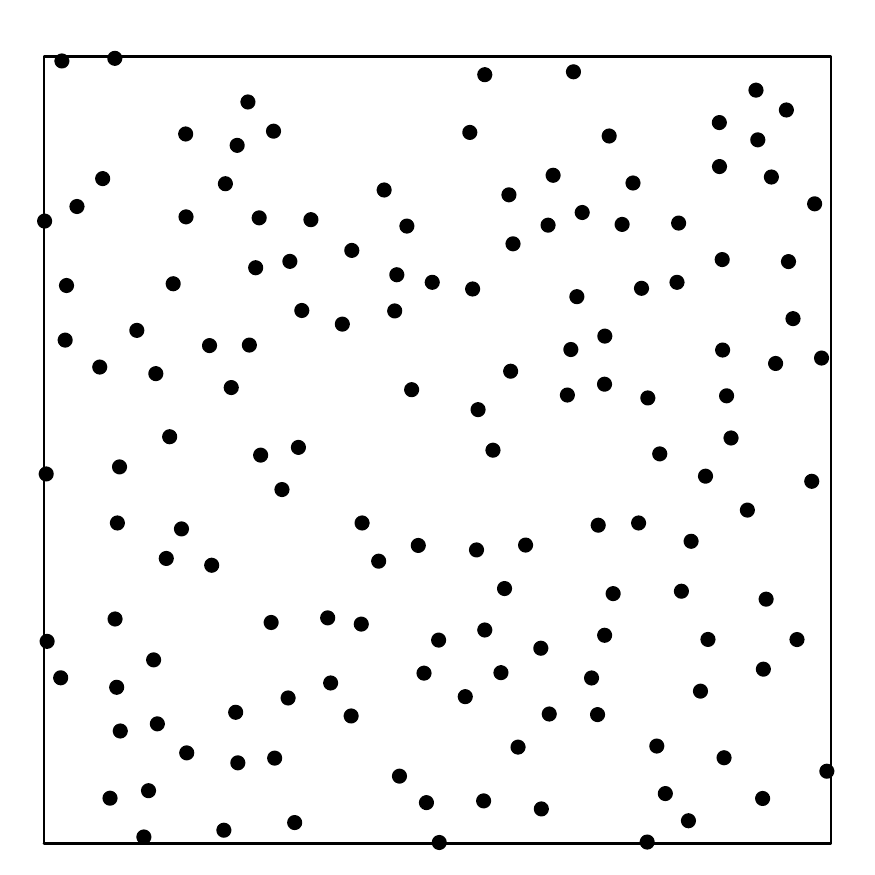}&\includegraphics[scale=.5]{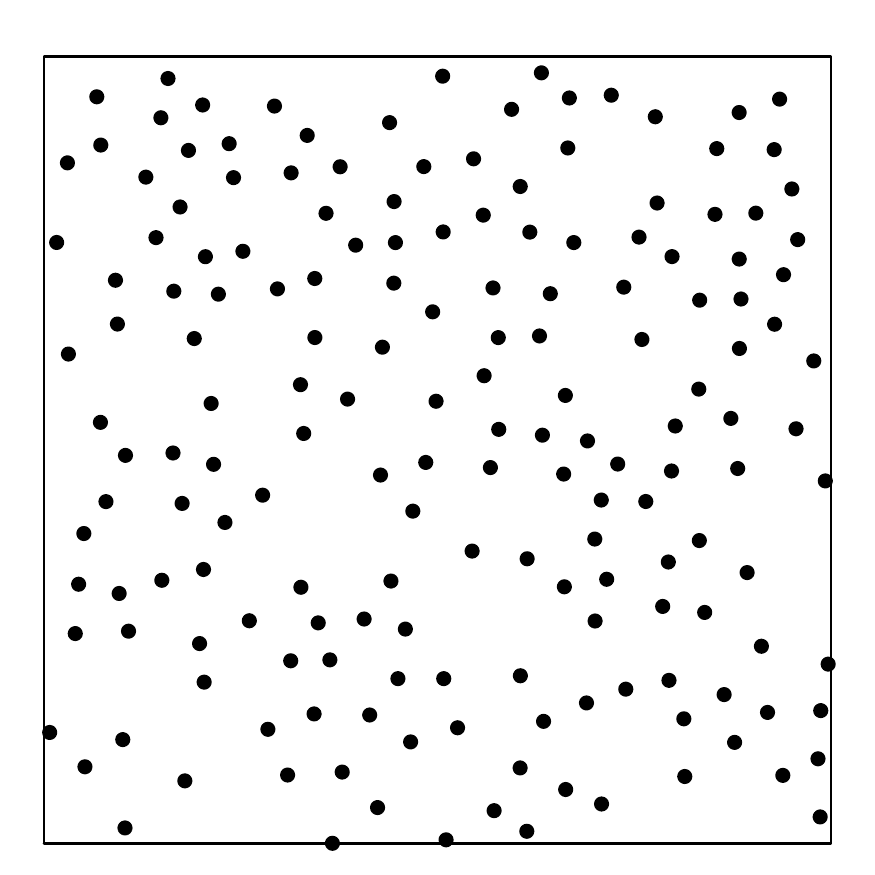}&\includegraphics[scale=.5]{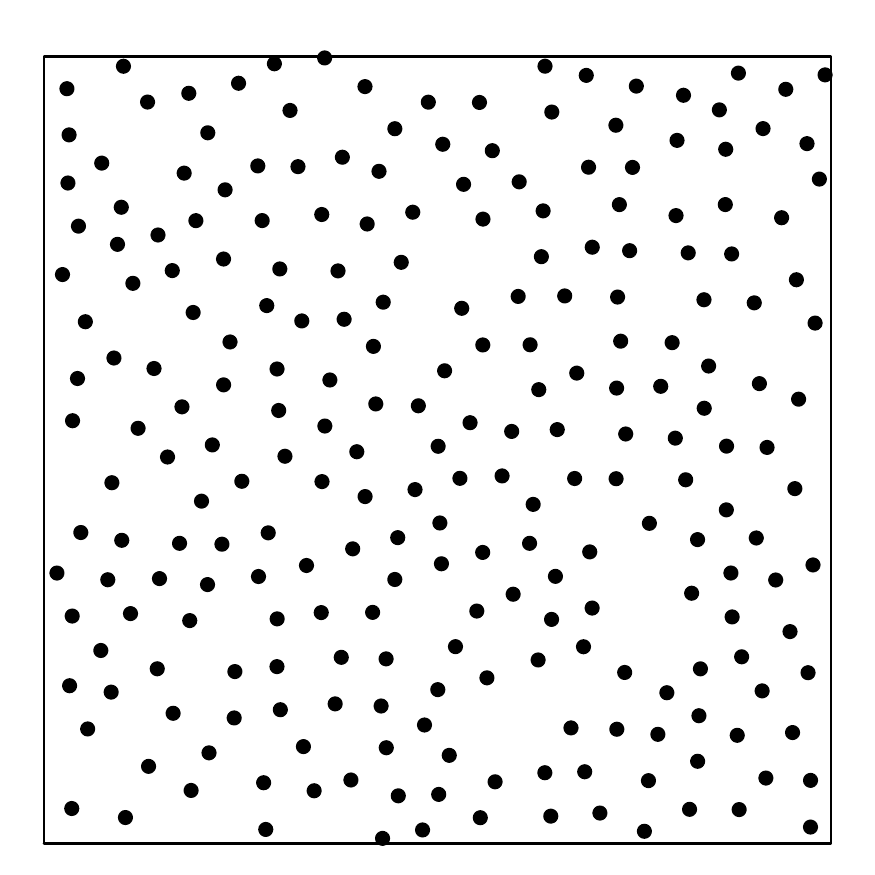}
  \end{tabular}
  \caption{\label{fig:LJ} Typical realizations  on $[-1,1]^2$ of a Lennard-Jones model with parameters $\beta=100$, $\sigma=0.1$ and $\varepsilon=0.1$ (left), 0.5 (middle) and 1 (right). }
\end{figure}

\begin{table}[htbp]
\centering
\begin{tabular}{rlll}
\hline
& \multicolumn{3}{c}{RWMSE}\\
&  $[-1/2,1/2]^2$ & $[-1,1]^2$& $[-2,2]^2$ \\
\hline
 \multicolumn{3}{l}{{\bf Low  ($\varepsilon=0.1$)}}\\
 $\alpha_n=R_n \in [0.05,0.3]$ & 3.26 (0.13) & 1.25 (0.13) & 0.62 (0.12) \\
$\alpha_n \in [0.05,0.3]$, $R_n=\infty$& 3.72 (0.05) & 1.79 (0.05) & 0.63 (0.06) \\
$\alpha_n=0$, $R_n=\infty$& 3.5  & 1.66  & 0.69  \\
   \hline
   \multicolumn{3}{l}{{\bf Moderate  ($\varepsilon=0.5$)}}\\
  $\alpha_n=R_n \in [0.05,0.3]$ & 0.65 (0.12) & 0.34 (0.14) & 0.2 (0.15) \\
 $\alpha_n \in [0.05,0.3]$, $R_n=\infty$& 0.68 (0.05) & 0.38 (0.05) & 0.19 (0.05) \\
 $\alpha_n=0$, $R_n=\infty$& 0.59  & 0.33  & 0.18 \\
\hline
 \multicolumn{3}{l}{{\bf High  ($\varepsilon=1$)}}\\
  $\alpha_n=R_n \in [0.05,0.3]$& 1.04 (0.08) & 0.42 (0.16) & 0.13 (0.16) \\
 $\alpha_n \in [0.05,0.3]$, $R_n=\infty$& 1.34 (0.05) & 0.36 (0.05) & 0.16 (0.05) \\
 $\alpha_n=0$, $R_n=\infty$& 1.23  & 0.27 & 0.17 \\
 \hline
\end{tabular} 
\caption{\label{tab:rmse} Root-weighted mean squared errors (RWMSE)  of parameters estimates for different Lennard-Jones models. The results are based on 100 replications. The realizations are generated on $[-n-2,n+2]^2$ for $n=1/2,1,2$ and the window of observation corresponds to $[-n,n]^2$. When it makes sense, we indicate between brackets the value $\alpha_{\mathrm{opt}}$ of $\alpha_n$ leading to the minimal value of RWMSE.
} 
\end{table}

\begin{table}[htbp]
\centering
\begin{tabular}{rrlrlrl}
\hline
& \multicolumn{6}{c}{ RWSB and RWV}  \\
&  \multicolumn{2}{c}{$[-1/2,1/2]^2$} & \multicolumn{2}{c}{$[-1,1]^2$}& 
\multicolumn{2}{c}{$[-2,2]^2$} \\
\hline
 \multicolumn{6}{l}{{\bf Low  ($\varepsilon=0.1$)}}\\
 $\alpha_n=R_n \in [0.05,0.3]$ & 1.82 & 2.70 & 0.57 & 1.11 & 0.07 & 0.62 \\
$\alpha_n \in [0.05,0.3]$, $R_n=\infty$ & 2.49 & 2.76 & 0.82 & 1.59 & 0.03 & 0.63 \\
$\alpha_n=0$, $R_n=\infty$ & 2.36 & 2.59 & 0.78 & 1.46 & 0.20 & 0.66 \\
   \hline
   \multicolumn{6}{l}{{\bf Moderate  ($\varepsilon=0.5$)}}\\
  $\alpha_n=R_n \in [0.05,0.3]$ & 0.23 & 0.60 & 0.16 & 0.30 & 0.07 & 0.19 \\
 $\alpha_n \in [0.05,0.3]$, $R_n=\infty$ & 0.04 & 0.66 & 0.10 & 0.37 & 0.02 & 0.19 \\
 $\alpha_n=0$, $R_n=\infty$& 0.07 & 0.58 & 0.02 & 0.33 & 0.02 & 0.18 \\
\hline
 \multicolumn{6}{l}{{\bf High  ($\varepsilon=1$)}}\\
  $\alpha_n=R_n \in [0.05,0.3]$& 0.43 & 0.71 & 0.16 & 0.39 & 0.07 & 0.12 \\
 $\alpha_n \in [0.05,0.3]$, $R_n=\infty$& 0.11 & 1.27 & 0.13 & 0.34 & 0.12 & 0.10 \\
 $\alpha_n=0$, $R_n=\infty$& 0.06 & 1.23 & 0.05 & 0.26 & 0.12 & 0.11 \\
 \hline
\end{tabular} 
\caption{\label{tab:bias} Root-weighted squared biases (RWSB) and variances (RWS) of parameters estimates for different Lennard-Jones models. 
The setting is the same as in Table~\ref{tab:rmse}. 
When $\alpha_n$ and $R_n$ vary, we report the values leading to the minimal $\mathrm{RWMSE}$, i.e. the values associated to  $\alpha_n=\alpha_{\mathrm{opt}}$ as given in Table~\ref{tab:rmse}.
}
\end{table}

\begin{figure}[htbp]
\begin{tabular}{ll}
  \includegraphics[scale=.45]{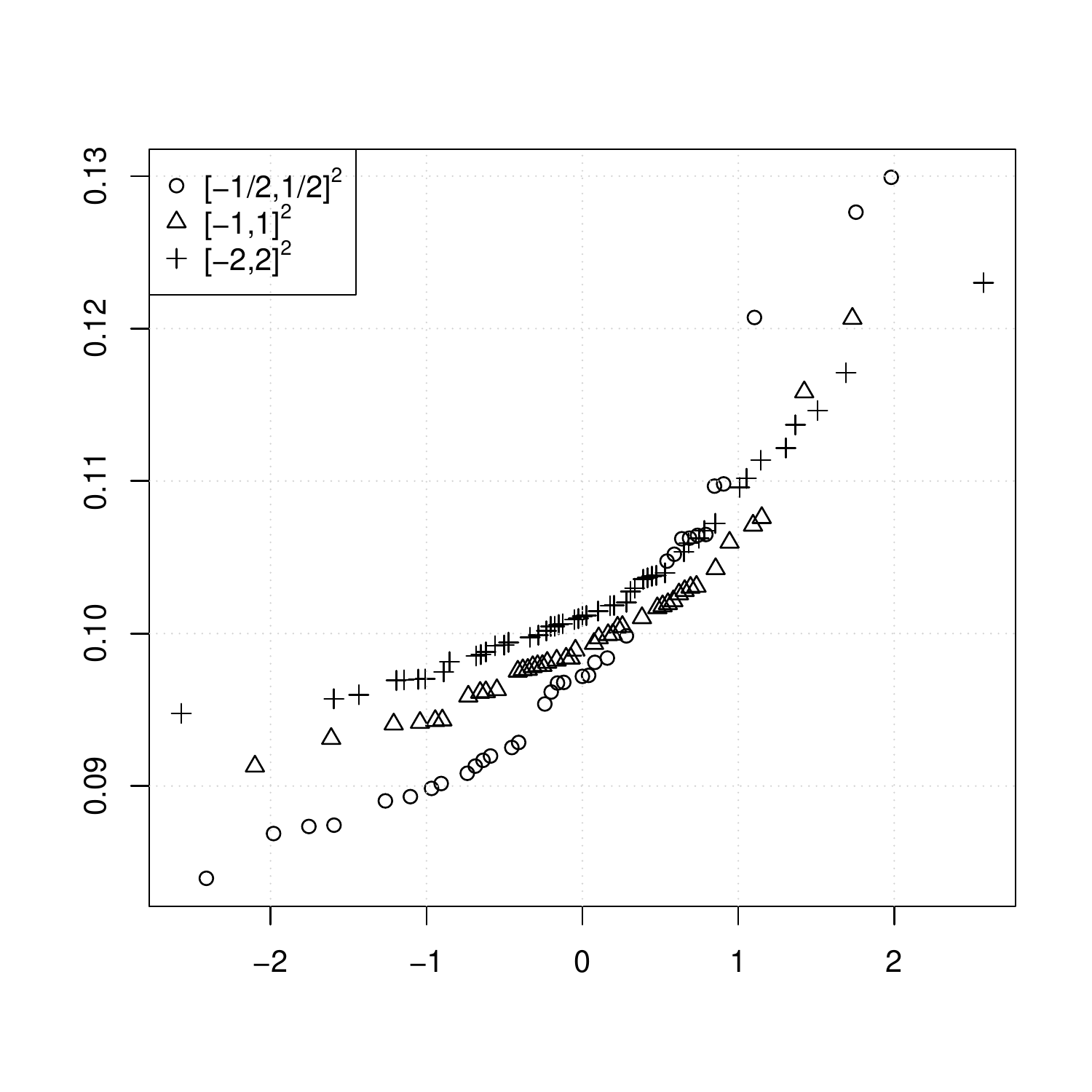} &\includegraphics[scale=.45]{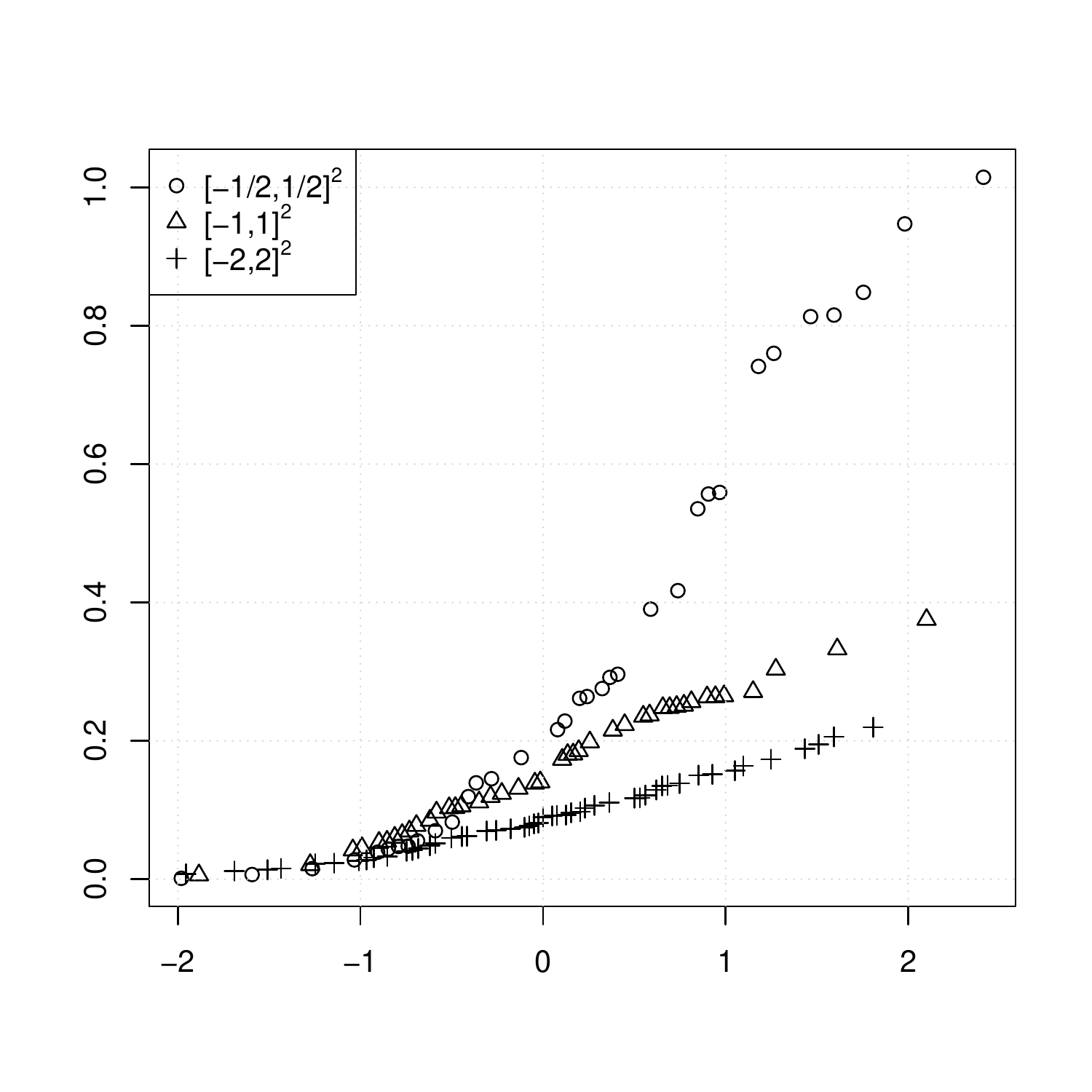} \\
  \includegraphics[scale=.45]{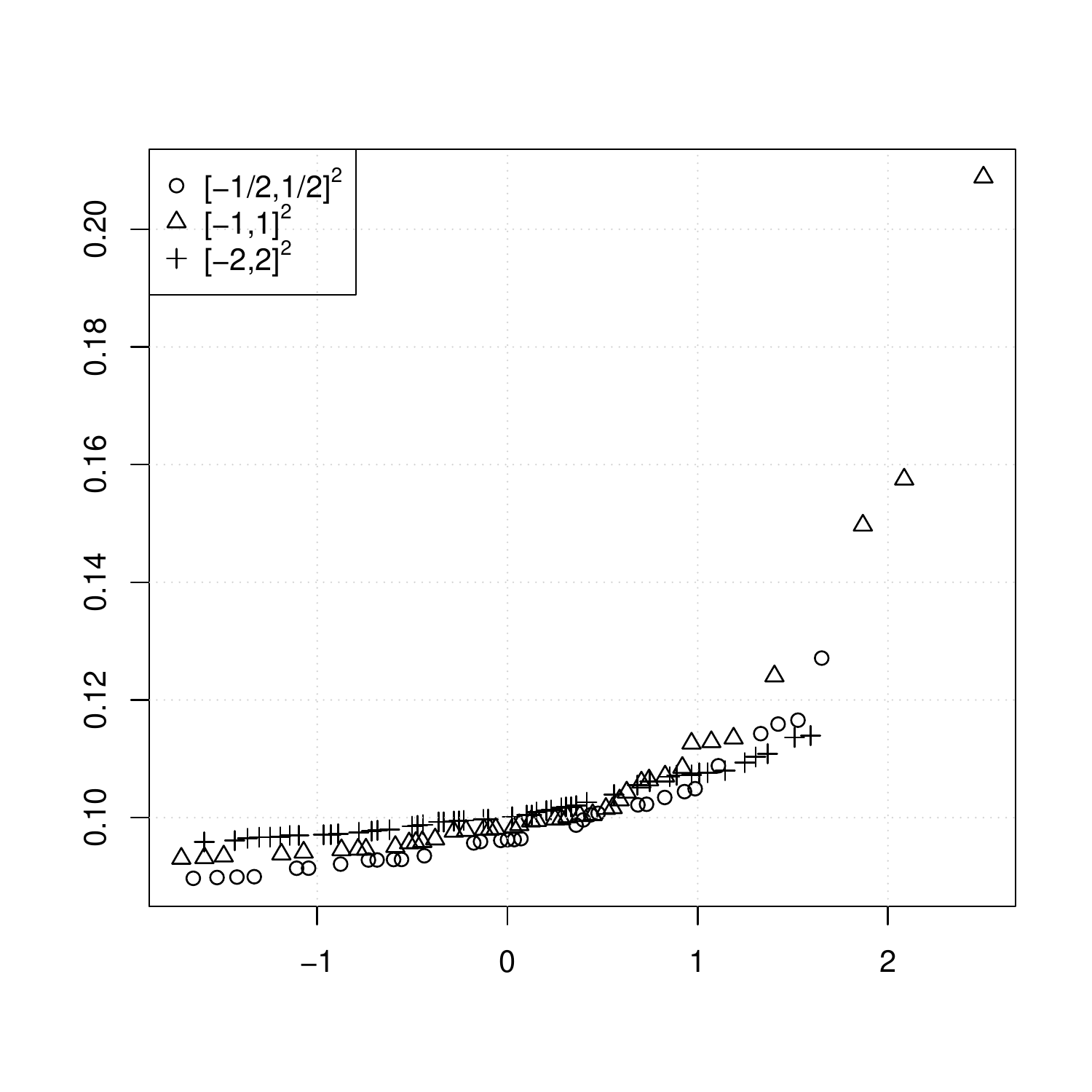} &\includegraphics[scale=.45]{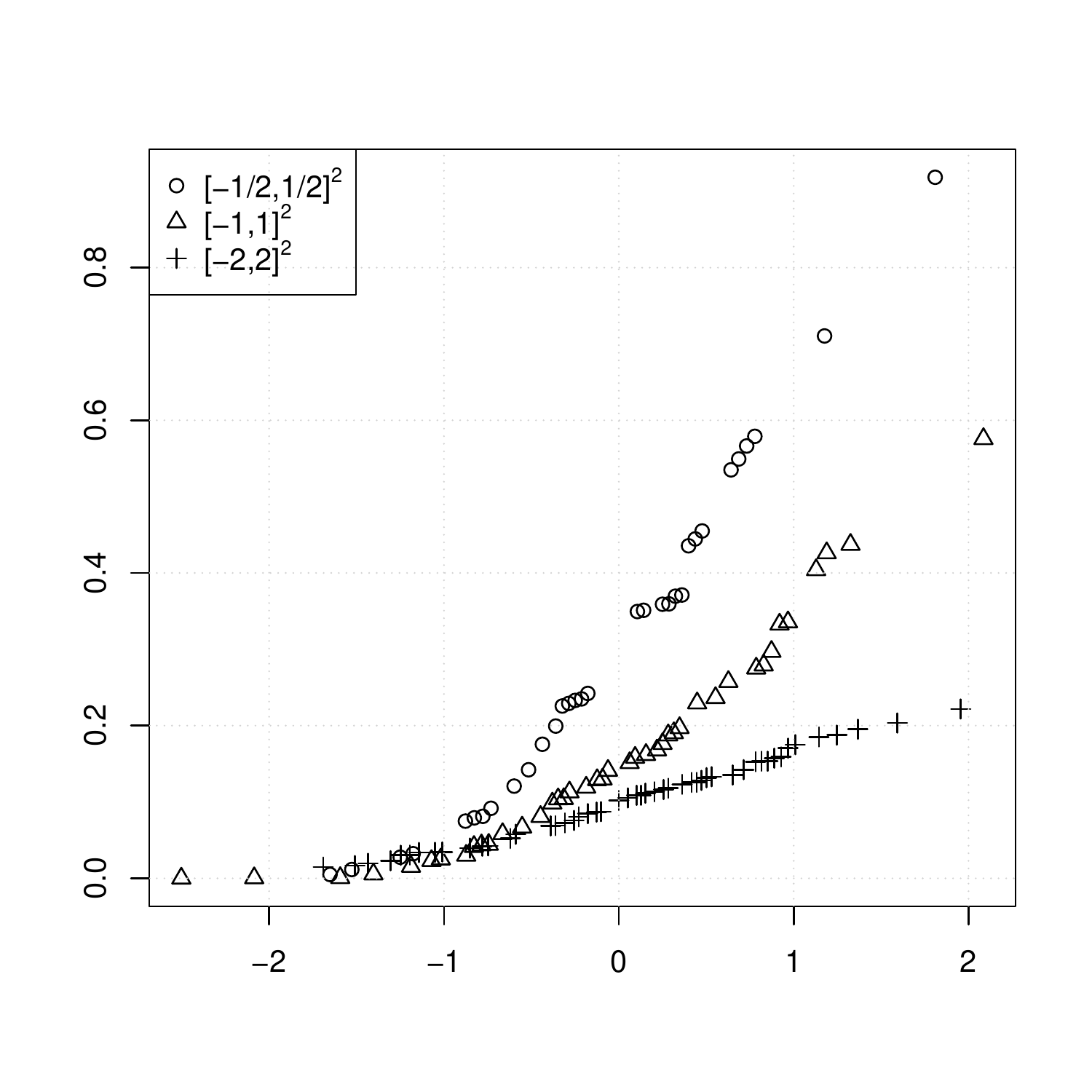} \\
  \includegraphics[scale=.45]{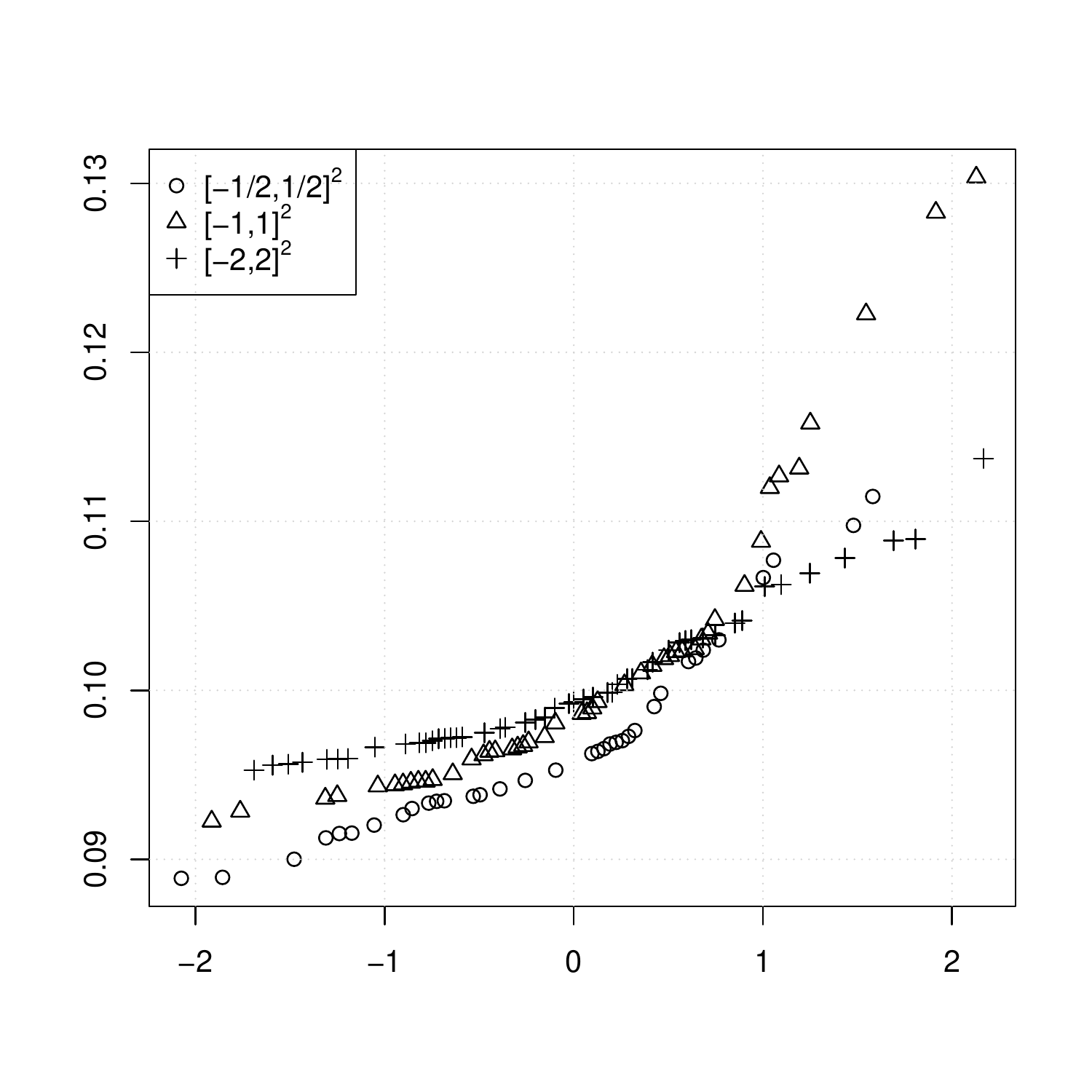} &\includegraphics[scale=.45]{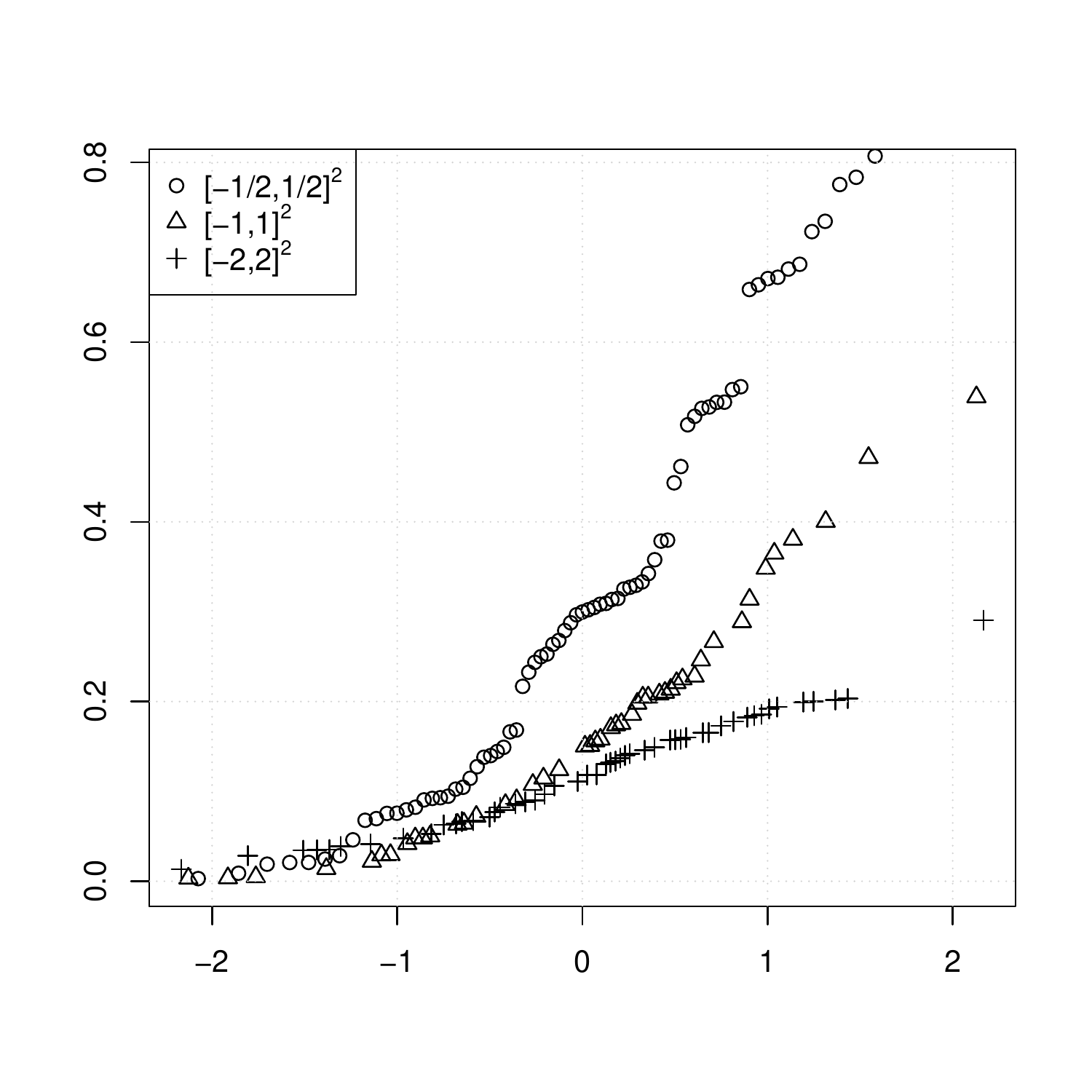} \\
\end{tabular}  
\caption{\label{fig:qqnorm1} Normal QQ-plots for estimates of the parameter $\varepsilon$ (left) and $\sigma$ (right) for the low interaction Lennard-Jones model, i.e. $(\log(\beta),\varepsilon,\sigma)=(\log(100),0.1,0.1)$. The first row (resp. the second and third) corresponds to estimates obtained with $\alpha_n=R_n=\alpha_{\mathrm{opt}}$ (resp. with $\alpha_n=\alpha_{\mathrm{opt}}, R_n=\infty$ and with $\alpha_n=0,R_n=\infty$). The optimal values  $\alpha_{\mathrm{opt}}$ are given in Table~\ref{tab:rmse}.}
\end{figure}

\begin{figure}[htbp]
\begin{tabular}{ll}
  \includegraphics[scale=.45]{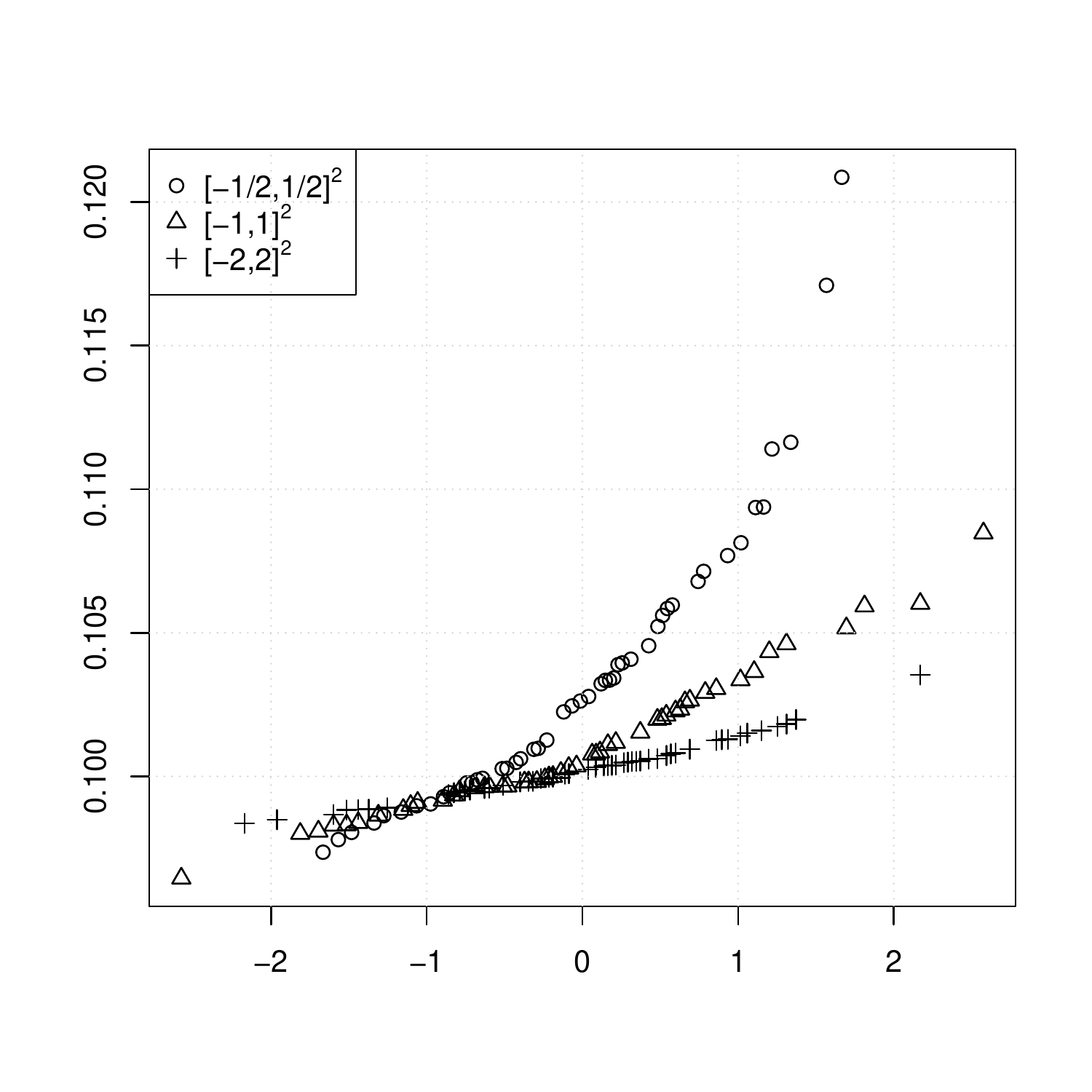} &\includegraphics[scale=.45]{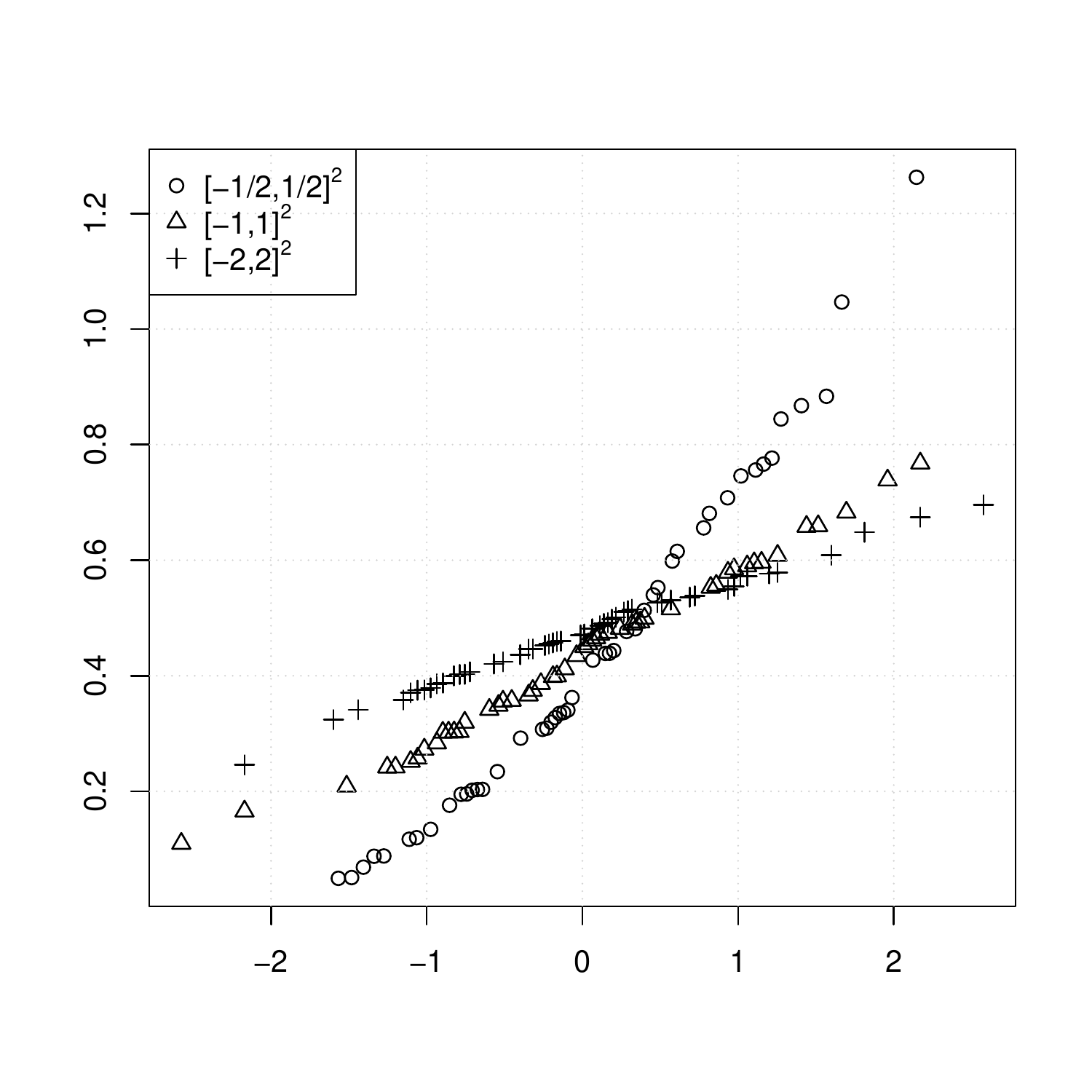} \\
  \includegraphics[scale=.45]{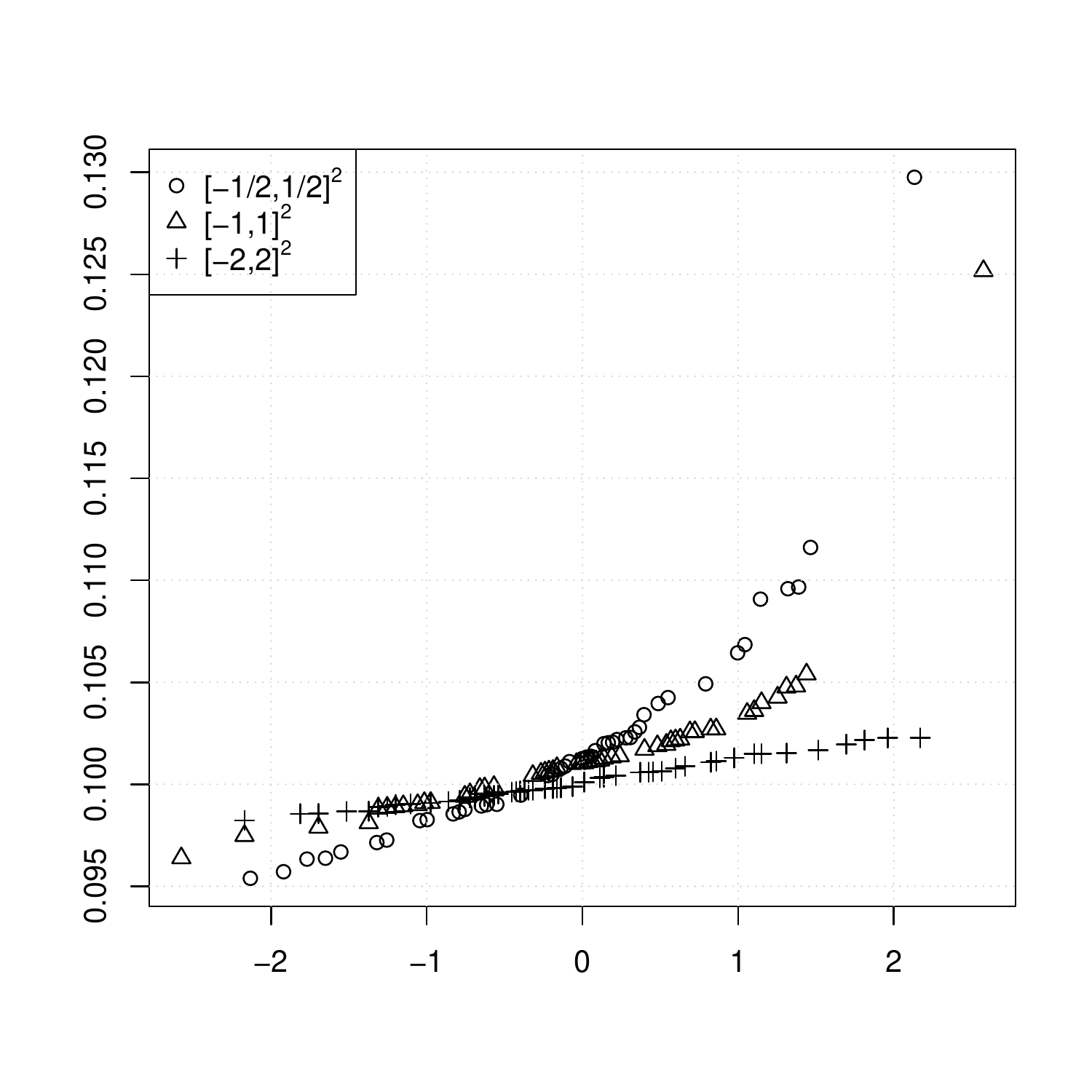} &\includegraphics[scale=.45]{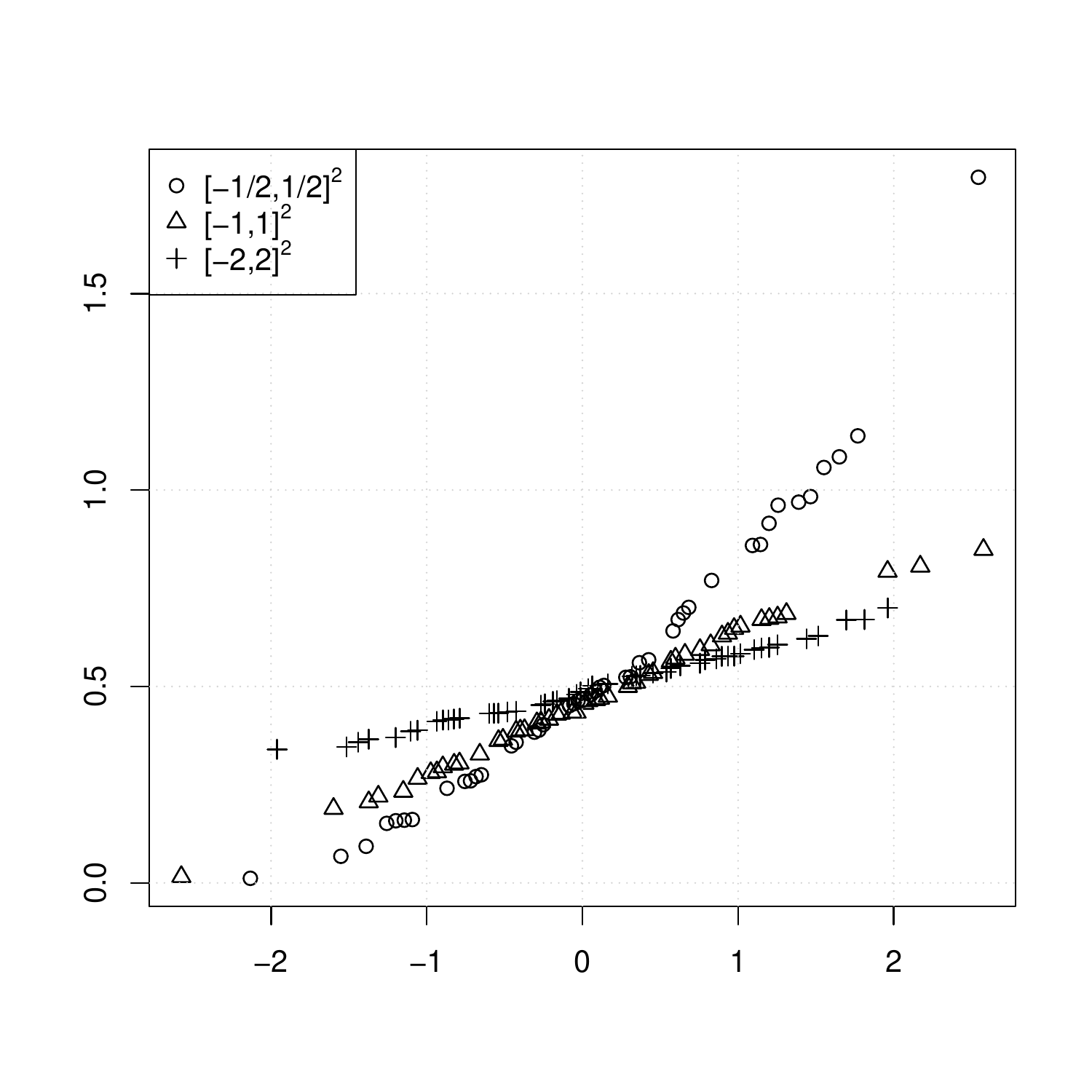} \\
  \includegraphics[scale=.45]{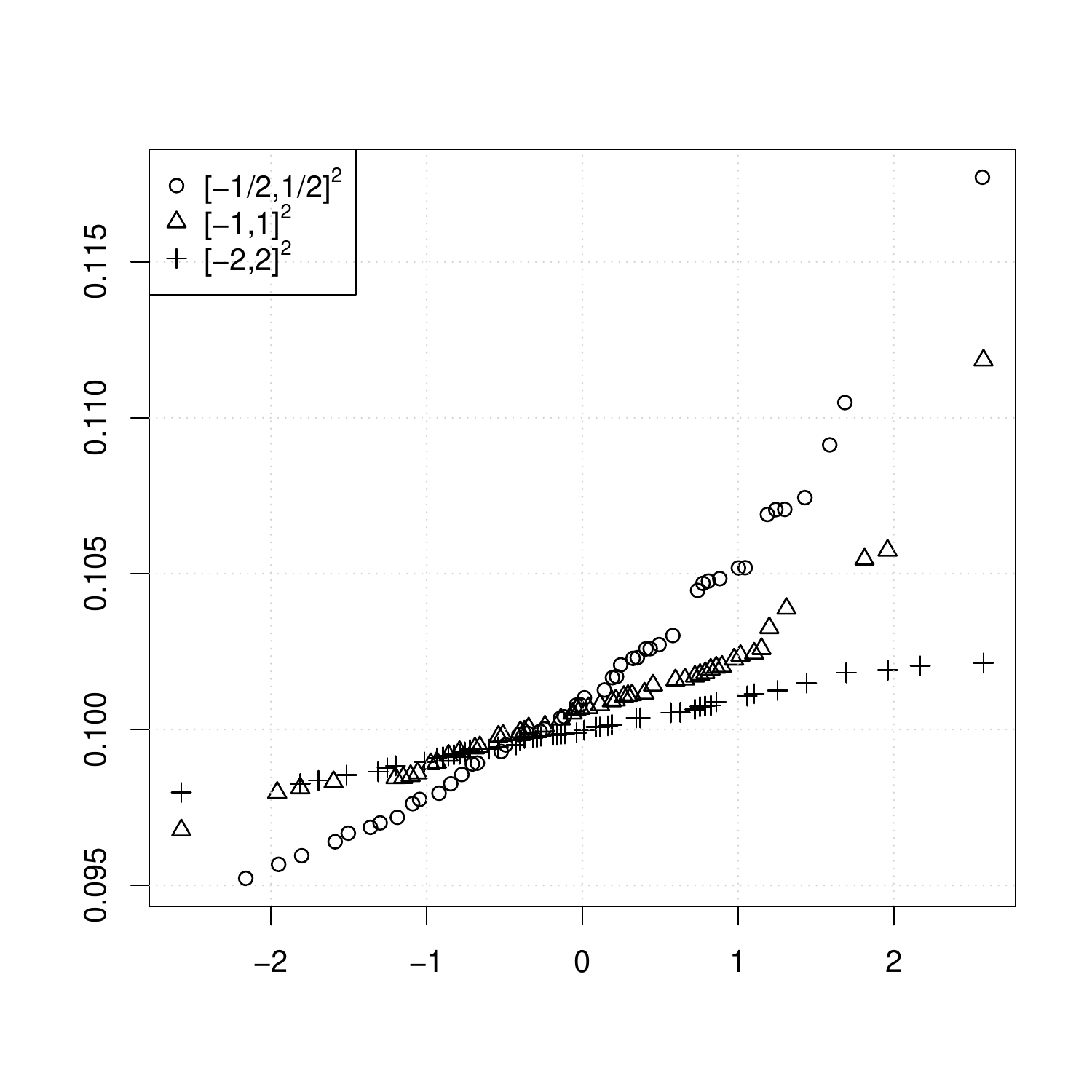} &\includegraphics[scale=.45]{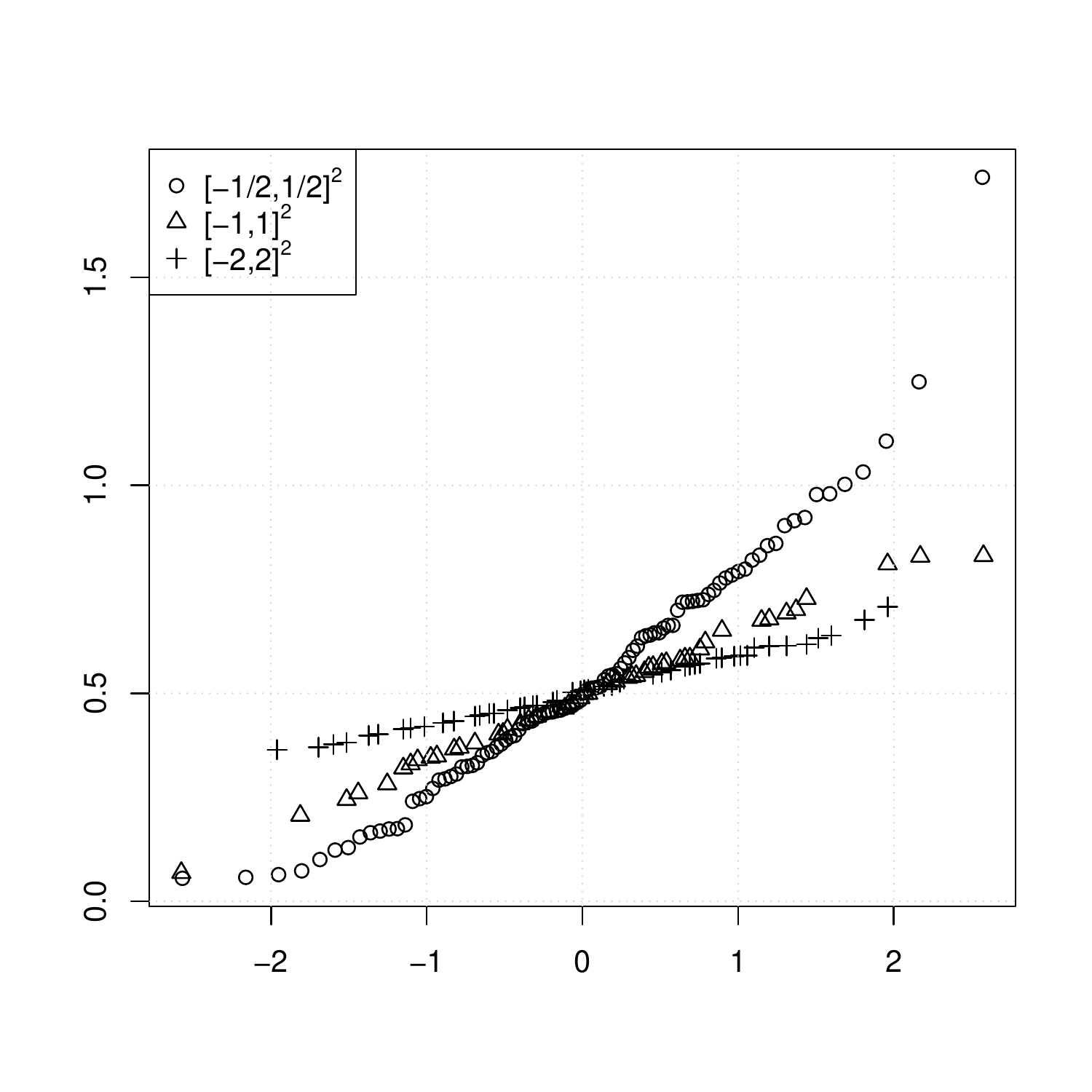} \\
\end{tabular}  
\caption{\label{fig:qqnorm2} Normal QQ-plots for estimates of the parameter $\varepsilon$ (left) and $\sigma$ (right) for the moderate interaction Lennard-Jones model, i.e. $(\log(\beta),\varepsilon,\sigma)=(\log(100),0.5,0.1)$. The first row (resp. the second and third) corresponds to estimates obtained with $\alpha_n=R_n=\alpha_{\mathrm{opt}}$ (resp. with $\alpha_n=\alpha_{\mathrm{opt}}, R_n=\infty$ and with $\alpha_n=0,R_n=\infty$). The optimal values  $\alpha_{\mathrm{opt}}$ are given in Table~\ref{tab:rmse}.}
\end{figure}

\begin{figure}[htbp]
\begin{tabular}{ll}
  \includegraphics[scale=.45]{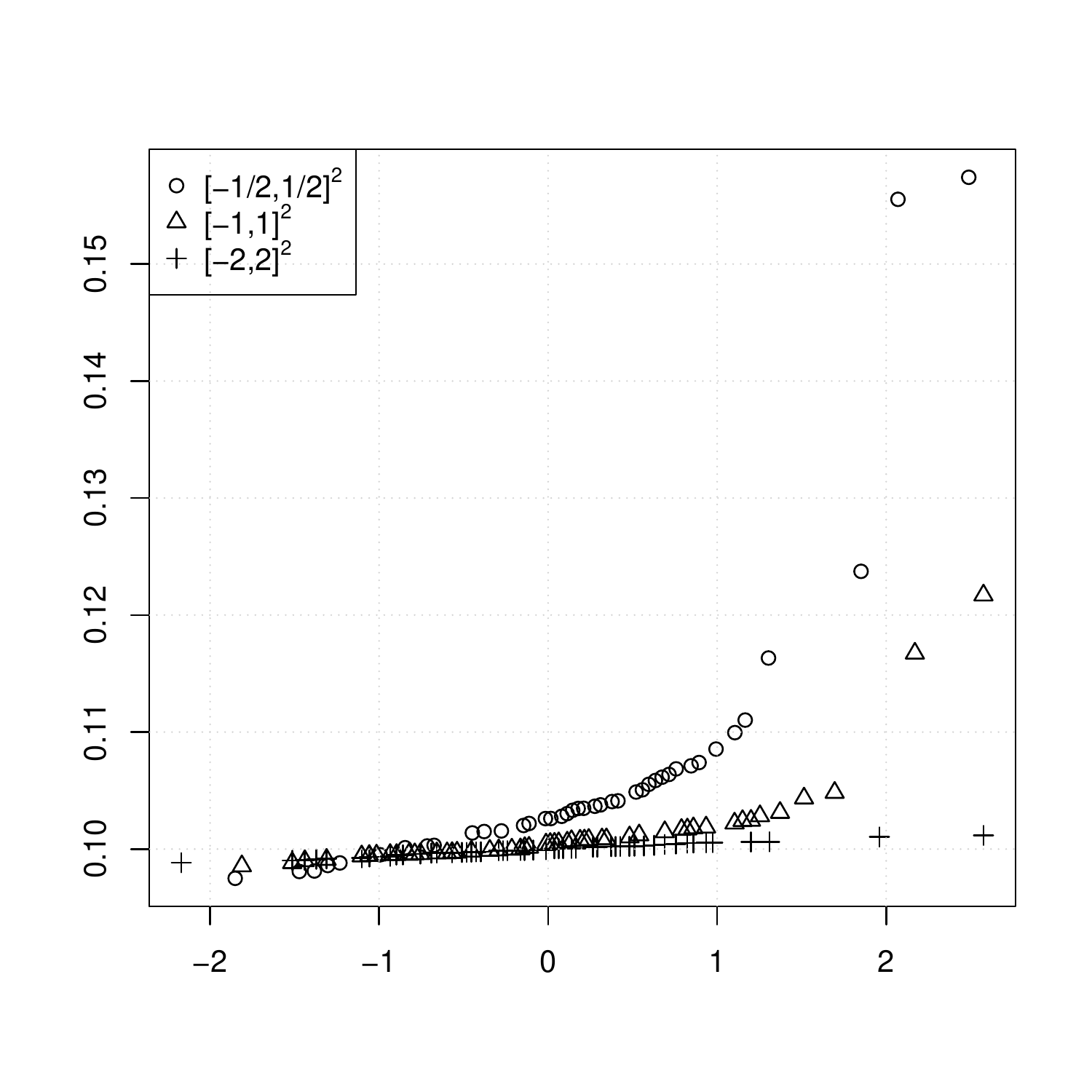} &\includegraphics[scale=.45]{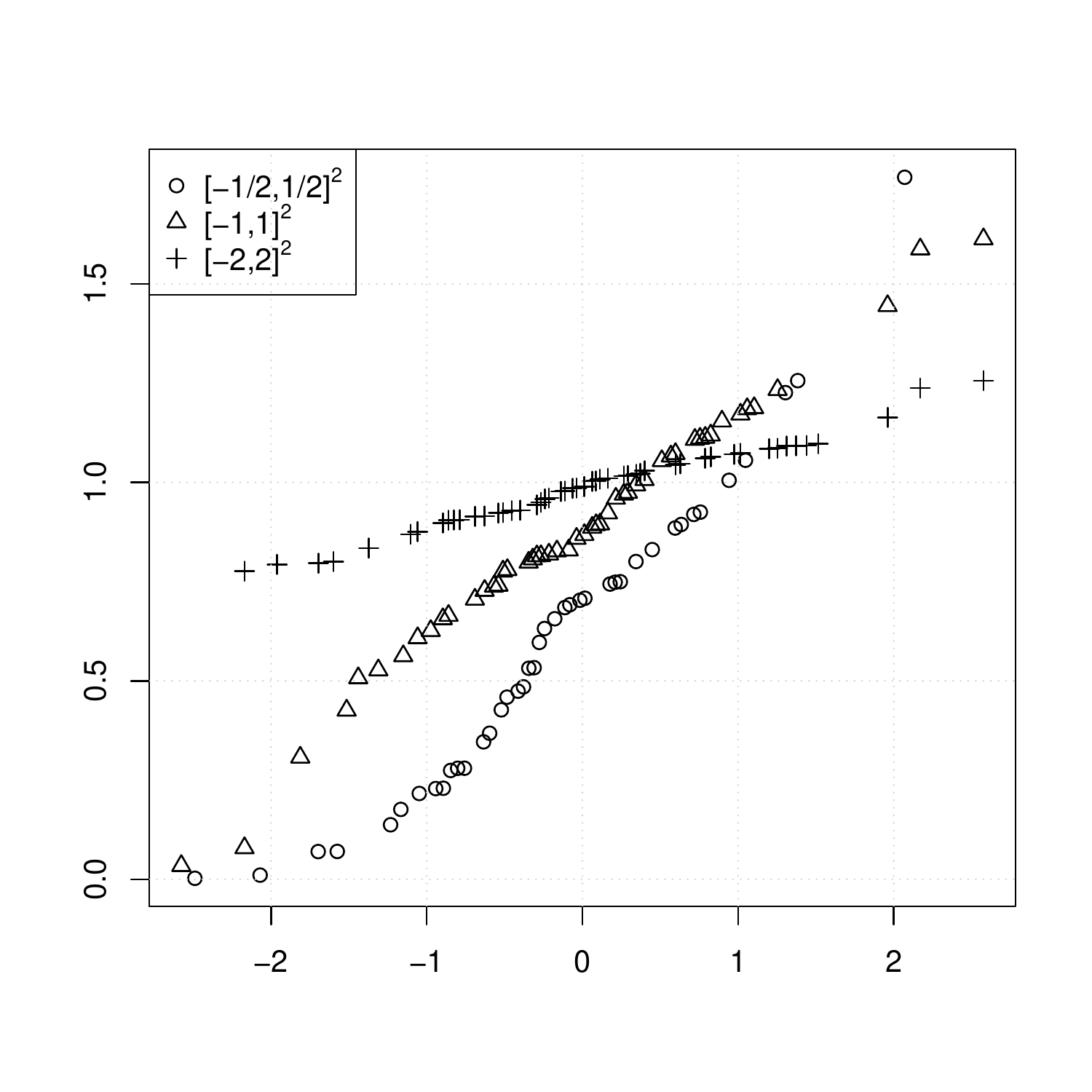} \\
  \includegraphics[scale=.45]{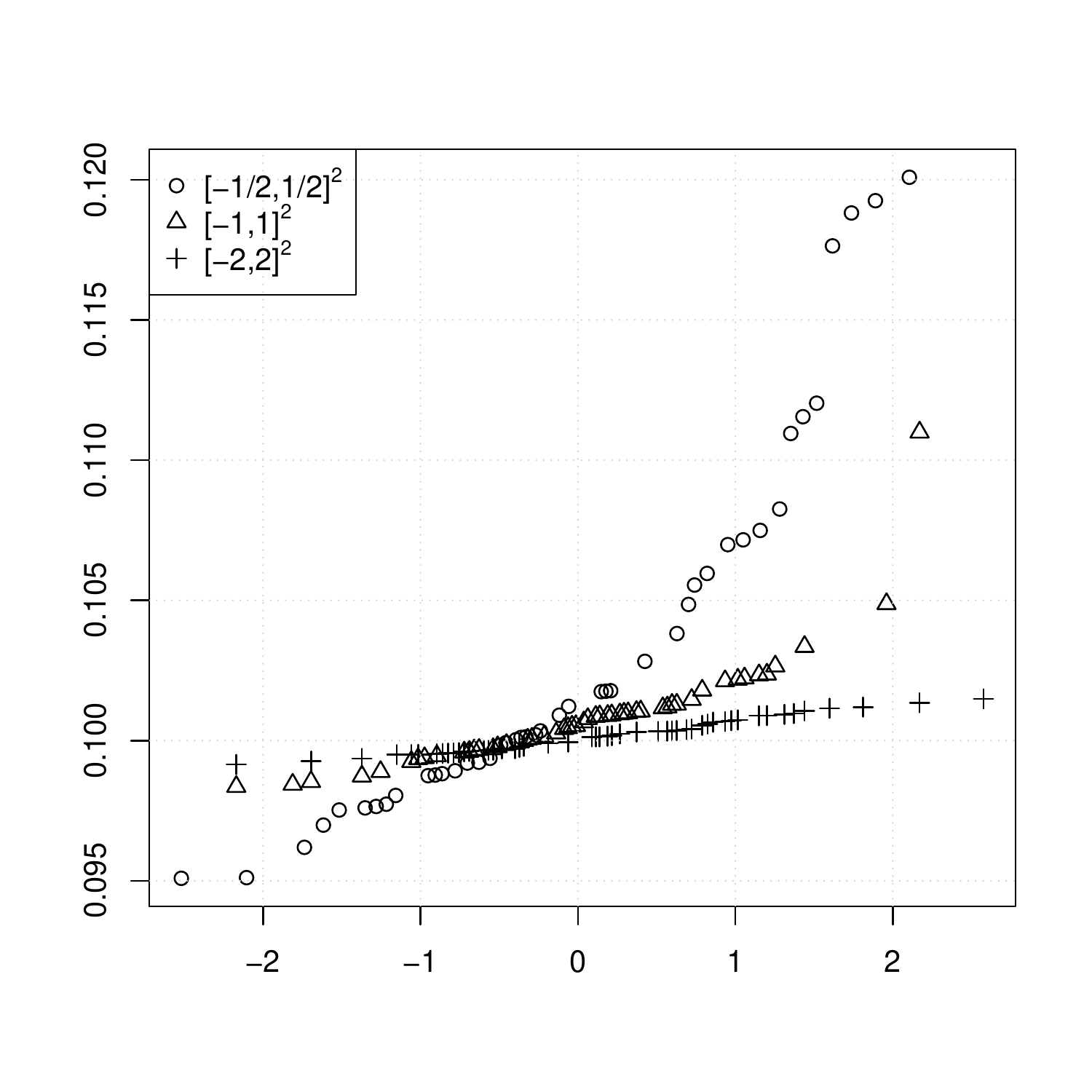} &\includegraphics[scale=.45]{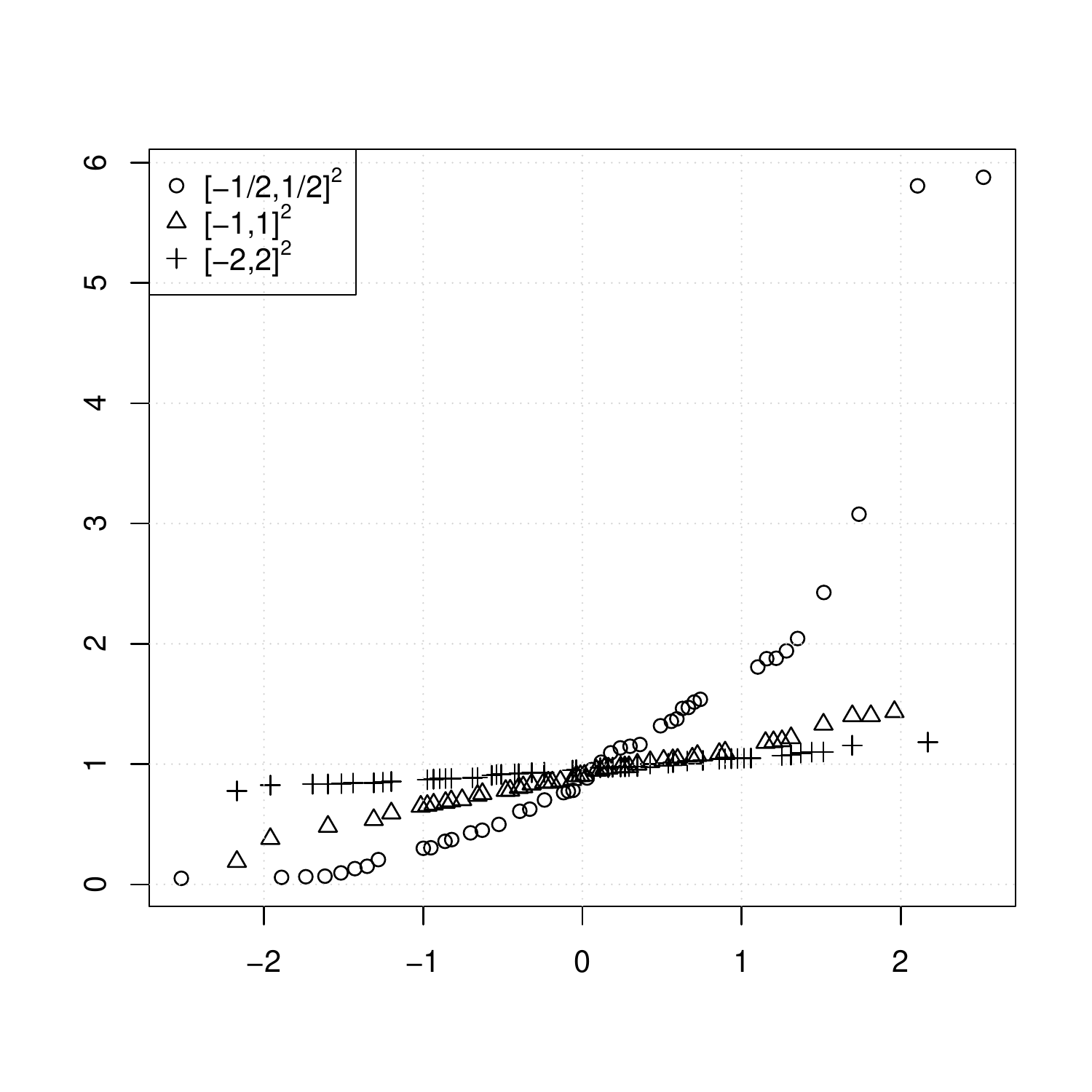} \\
  \includegraphics[scale=.45]{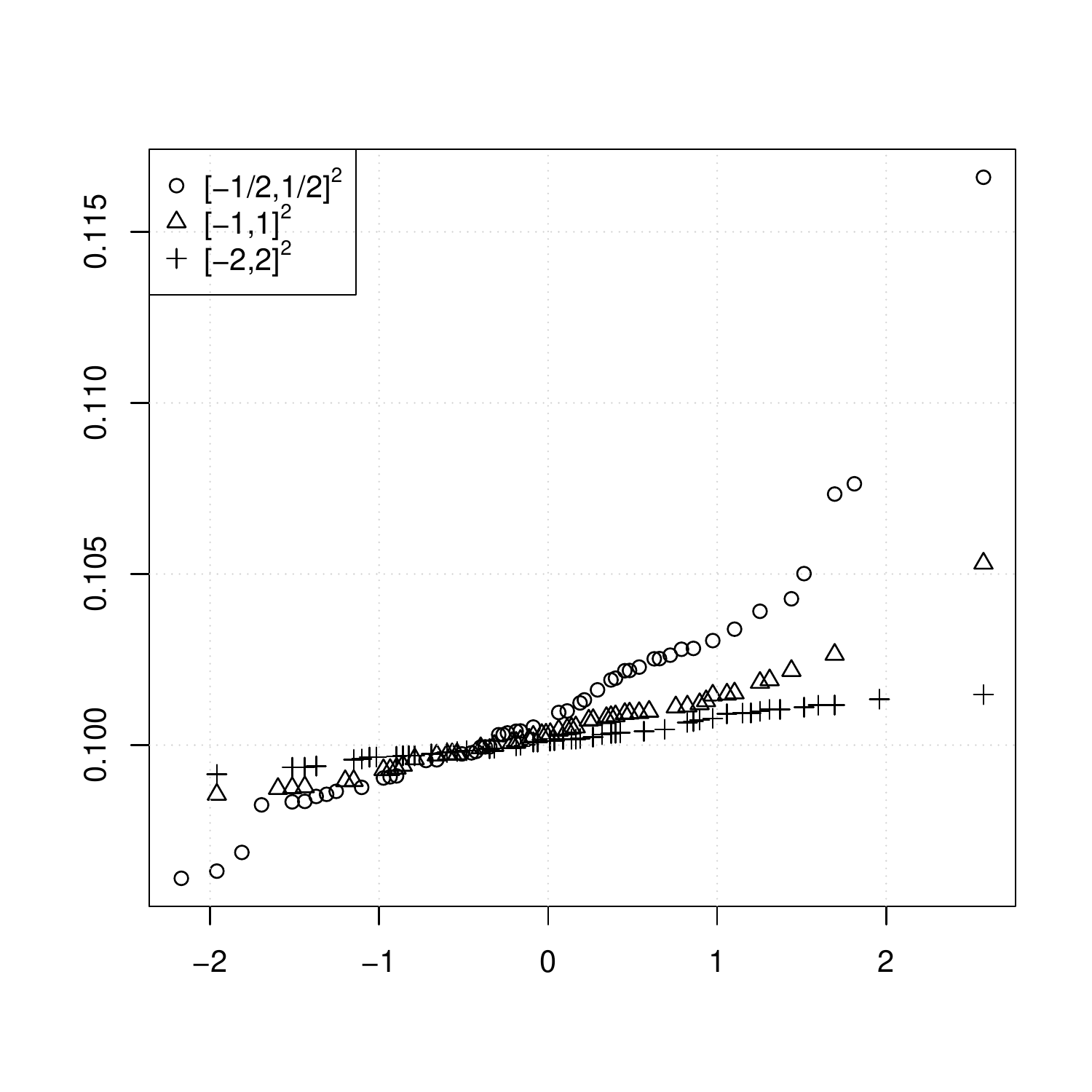} &\includegraphics[scale=.45]{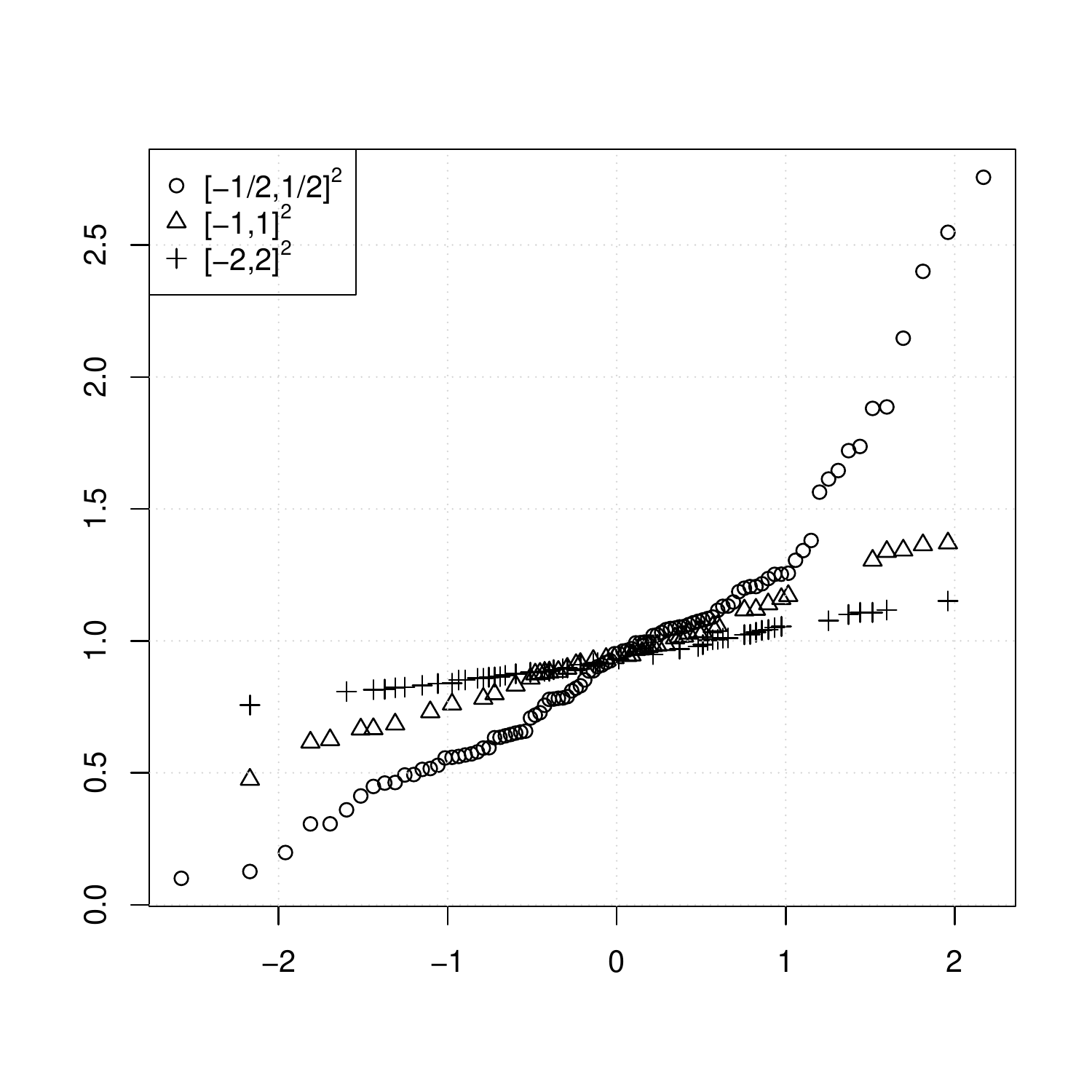} \\
\end{tabular}  
\caption{\label{fig:qqnorm3} Normal QQ-plots for estimates of the parameter $\varepsilon$ (left) and $\sigma$ (right) for the high interaction Lennard-Jones model, i.e. $(\log(\beta),\varepsilon,\sigma)=(\log(100),1,0.1)$. The first row (resp. the second and third) corresponds to estimates obtained with $\alpha_n=R_n=\alpha_{\mathrm{opt}}$ (resp. with $\alpha_n=\alpha_{\mathrm{opt}}, R_n=\infty$ and with $\alpha_n=0,R_n=\infty$). The optimal values  $\alpha_{\mathrm{opt}}$ are given in Table~\ref{tab:rmse}.}
\end{figure}


\appendix

\section{A new central limit theorem}\label{sec:clt} 
When the Gibbs point process has a finite range, the asymptotic normality of the  pseudolikelihood or the logistic regression  estimators are essentially derived from a central limit theorem for conditionally centered random fields, see the references in introduction. This connection comes from the fact that in the finite range case, the score function of the pseudolikelihood (or the logistic regression) is  
conditionally centered, by application of the conditional GNZ formula~\eqref{condGNZ}.  
In the infinite range case, the score functions of the log-pseudolikelihood and the logistic regression are neither centered, nor conditionally centered. 
In  the following theorem,  the conditional centering condition is replaced by condition~(d), which turns out to be sufficient for our application to $s_{\Wna,R_n}(\bX;\theta^\star)$ in Theorem~\ref{thm:cvloi}.  
The other conditions are mainly due to the non-stationary setting induced by the presence of $\alpha_n$ and $R_n$. 
They allow in particular to control the asymptotic behavior of the empirical covariance matrix in \eqref{cvmatrix}. 
 For two square matrices $A,B$ we write $A\geq B$ when $A-B$ is a positive semi-definite matrix.

\begin{theorem} \label{tcl1} For $n\in \NN$ and $j\in \ZZ^d$, let $X_{n,j}$  be a triangular array field in a measurable space $S$. For $n\in \NN$, let $\mathcal I_n\subset \ZZ^d$ and  $\alpha_n\in\mathbb R_+$ such that $|\mathcal I_n|\to \infty$ and $\alpha_n\to \infty$ as $n\to \infty$. Define $S_n=\sum_{j\in \mathcal I_n}Z_{n,j}$ where $Z_{n,j}=f_{n,j}(X_{n,k}, k \in \mathcal K_{n,j})$ with $\mathcal K_{n,j}= \{ k\in \ZZ^d, |k-j|\leq \alpha_n\}$ and where $f_{n,j}:S^{\mathcal K_{n,j}} \to \RR^p$ is a measurable function. We define $\widehat \Sigma_n$ and $\Sigma_n$ by
\[
  \widehat\Sigma_n = \sum_{j \in \mathcal I_n} \sum_{
  \begin{subarray}{c}
  k \in \mathcal I_n \\ |k-j|\leq \alpha_n
  \end{subarray}} Z_{n,j} Z_{n,k}^\top \qquad \mbox{ and } \qquad \Sigma_n = \EE \widehat\Sigma_n. 
\]
We assume that  
\begin{itemize}
\item[(a)] $\EE Z_{n,j} = 0$ and  there exists $q\geq 1$ such that $\ \sup_{n\geq 1}\sup_{j\in \mathcal I_n}\EE \|Z_{n,j}\|^{4q}<\infty$,
\item[(b)] for any sequence $\mathcal J_n\subset \mathcal I_n$  such that $|\mathcal J_n|\to \infty$  as $n\to \infty$, 
\[
 |\mathcal J_n|^{-1} \sum_{
 \begin{subarray}{c}
 j,k\in \mathcal J_n  
 \end{subarray}
 } \big\| 
\EE ( Z_{n,j}Z_{n,k}^\top)
  \big\| =\mathcal O(1).
\]
\end{itemize}
Then if $\alpha_n^{\frac{4q-1}{2q-1}d}=o(|\mathcal I_n|)$ as $n\to \infty$,
\begin{equation}\label{cvmatrix}
  |\mathcal I_n|^{-1} ( \widehat\Sigma_n - \Sigma_n ) \to 0 \quad \mbox{ in } L^{2q}.
\end{equation}
If in addition
\begin{itemize}
\item[(c)]  there exists a positive definite matrix $Q$ such that $|\mathcal I_n|^{-1}\Sigma_n \geq Q$ for $n$ sufficiently large,
\item[(d)] as $n\to \infty$
\[ 
  |\mathcal I_n|^{-1/2} \sum_{j\in \mathcal I_n} \EE \left\| 
\EE \left( Z_{n,j} | \; X_{n,k}, k\neq j \right)
  \right\| \to 0,
\]
\end{itemize}
then
\begin{equation}\label{cvloi}
\Sigma_n^{-1/2}S_n \overset{d}{\to} \mathcal N(0,I_p).\end{equation}
\end{theorem}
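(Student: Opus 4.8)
The plan is to prove \eqref{cvmatrix} first, and then \eqref{cvloi} by a blocking argument combined with a Lindeberg/Bernstein-type CLT for martingale-like increments, using condition~(d) to handle the lack of conditional centering.

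\medskip

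\noindent\textbf{Step 1: the $L^{2q}$ convergence \eqref{cvmatrix}.}
I would write $W_{n,j}:=\sum_{k\in\mathcal I_n,\,|k-j|\le\alpha_n}Z_{n,k}$, so that $\widehat\Sigma_n=\sum_{j}Z_{n,j}W_{n,j}^\top$ and $\widehat\Sigma_n-\Sigma_n=\sum_j\bigl(Z_{n,j}W_{n,j}^\top-\EE(Z_{n,j}W_{n,j}^\top)\bigr)$. The key structural point is the \emph{finite-dependence} range: $Z_{n,j}Z_{n,k}^\top-\EE(\cdot)$ is measurable with respect to $(X_{n,\ell})$ for $\ell$ in a box of radius $2\alpha_n$ around $j$, so two such centered products are independent as soon as the corresponding boxes are disjoint. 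To bound $\EE\|\,|\mathcal I_n|^{-1}(\widehat\Sigma_n-\Sigma_n)\|^{2q}$ I would expand the $2q$-th power as a sum over $2q$-tuples of indices, group the terms by the partition induced by the $4\alpha_n$-dependence relation, and observe that any group containing a \emph{singleton} box (an index whose $2\alpha_n$-box meets no other box in the tuple) contributes zero by independence and centering. Hence only tuples that cluster into at most $q$ distinct ``$4\alpha_n$-clusters'' survive; a standard counting argument (each cluster of size $\ge2$ costs a factor $\alpha_n^d$ but saves a summation) shows the surviving sum is $\mathcal O\bigl(|\mathcal I_n|^{q}\alpha_n^{(q)d}\bigr)$ up to constants, where the exact exponent, combined with the moment bound~(a) via H\"older, yields $\EE\|\widehat\Sigma_n-\Sigma_n\|^{2q}=\mathcal O\bigl(|\mathcal I_n|^{2q-1}\alpha_n^{d}\cdot|\mathcal I_n|^{?}\bigr)$; matching this against $|\mathcal I_n|^{2q}$ gives exactly the stated condition $\alpha_n^{\frac{4q-1}{2q-1}d}=o(|\mathcal I_n|)$. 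I expect this combinatorial bookkeeping — getting the precise power of $\alpha_n$ and of $|\mathcal I_n|$ so that the hypothesis comes out sharp — to be the \emph{main obstacle}; everything else is routine.

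\medskip

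\noindent\textbf{Step 2: reduction to a sum of (almost) martingale increments.}
For \eqref{cvloi} I would fix an enumeration $j_1,j_2,\dots$ of $\mathcal I_n$ and set $\mathcal F_{n,m}=\sigma(X_{n,k}:k\in\{j_1,\dots,j_m\})$, and decompose $Z_{n,j_m}=D_{n,m}+r_{n,m}$ where $D_{n,m}=Z_{n,j_m}-\EE(Z_{n,j_m}\mid X_{n,k},k\ne j_m)$ is the conditionally-centered part and $r_{n,m}=\EE(Z_{n,j_m}\mid X_{n,k},k\ne j_m)$. By condition~(d), $\Sigma_n^{-1/2}\sum_m r_{n,m}\to0$ in $L^1$, hence in probability, using $\|\Sigma_n^{-1/2}\|=\mathcal O(|\mathcal I_n|^{-1/2})$ from~(c). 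So by Slutsky it suffices to prove the CLT for $\widetilde S_n=\sum_m D_{n,m}$.

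\medskip

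\noindent\textbf{Step 3: CLT for the conditionally centered part.}
Here I would follow the Guyon/Comets--Janzura strategy (as in \citet{guyon1992,comets:janzura:98,coeurjolly:lavancier:13}): the increments $D_{n,m}$ are centered given the configuration outside $j_m$, which makes $\sum_m u^\top D_{n,m}$ behave, for the purpose of characteristic functions, like a triangular array to which a Lindeberg-type argument applies once one controls the conditional variances. Concretely, for $u\in\RR^p$ I would show $\EE\exp(\ii\, u^\top\Sigma_n^{-1/2}\widetilde S_n)\to\exp(-\tfrac12\|u\|^2)$ by the classical device of writing $e^{\ii x}-1-\ii x=O(x^2\wedge|x|^3)$ and exploiting the (almost) orthogonality $\EE(D_{n,m}\mid X_{n,k},k\ne j_m)=0$ to kill cross terms; the quadratic term is $u^\top\Sigma_n^{-1/2}\bigl(\sum_m\EE(D_{n,m}D_{n,m}^\top)+\text{off-diagonal}\bigr)\Sigma_n^{-1/2}u$, which converges to $\|u\|^2$ because $\widehat\Sigma_n/|\mathcal I_n|\to\Sigma_n/|\mathcal I_n|$ in $L^{2q}$ by Step~1 (note $\widehat\Sigma_n$ is exactly the range-$\alpha_n$ truncated empirical covariance, and replacing $Z$ by $D$ changes it negligibly again by (d)) and $\Sigma_n/|\mathcal I_n|\ge Q$; the remainder is controlled by (a) and~(b), which bound $\sum_{j,k}\|\EE(Z_{n,j}Z_{n,k}^\top)\|=\mathcal O(|\mathcal I_n|)$ and hence the Lyapunov-type sums. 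Combining Steps~2 and~3 with Slutsky gives $\Sigma_n^{-1/2}S_n\overset{d}{\to}\mathcal N(0,I_p)$, which is \eqref{cvloi}.
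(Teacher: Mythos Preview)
Your Step~1 is essentially the paper's argument: write $\widehat\Sigma_n-\Sigma_n=\sum_{j\in\mathcal I_n}U_{n,j}$ with $U_{n,j}$ measurable with respect to $(X_{n,k})_{|k-j|\le 2\alpha_n}$, observe that any $2q$-tuple containing an index at distance~$>4\alpha_n$ from all others contributes zero to $\EE\Delta_{mm'}^{2q}$, then bound the remaining tuples by H\"older and~(a). The paper's counting is actually simpler than your clustering picture: it gets $\EE U_{n,j}^{2q}=\mathcal O(\alpha_n^{2qd})$ and sums over tuples with all $j_k$ within $4\alpha_n$ of $j_1$, yielding $\EE|\Delta_{mm'}|^{2q}=\mathcal O(|\mathcal I_n|\,\alpha_n^{(4q-1)d})$ directly.

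Steps~2--3 contain a real gap. First, the CLTs you invoke (Guyon, Comets--Jan\v{z}ura, \dots) are for fields with \emph{fixed} finite dependence range, not triangular arrays with range $\alpha_n\to\infty$; proving a CLT in this regime is precisely the novelty of the present theorem, so you cannot outsource it. Second, and more concretely, your decomposition $Z_{n,j}=D_{n,j}+r_{n,j}$ trades one difficulty for a harder one. Since $r_{n,j}=\EE(Z_{n,j}\mid X_{n,k},k\ne j)$ depends on \emph{all} $X_{n,k}$, $k\ne j$, the variables $D_{n,j}$ lose the finite-range structure $Z_{n,j}=f_{n,j}(X_{n,k},k\in\mathcal K_{n,j})$; any Bolthausen-type argument applied to $\widetilde S_n=\sum D_{n,j}$ then either fails (because the leave-one-out sum $\widetilde S_n-\widetilde S_{n,j}$ is no longer $\sigma(X_{n,k},k\ne j)$-measurable) or forces you to compare the $D$-based empirical covariance $\sum_{|k-j|\le\alpha_n}D_{n,j}D_{n,k}^\top$ to $\Sigma_n$. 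Your claim that ``replacing $Z$ by $D$ changes $\widehat\Sigma_n$ negligibly by~(d)'' is not justified: (d) gives only the $L^1$ bound $\sum_j\EE\|r_{n,j}\|=o(|\mathcal I_n|^{1/2})$, whereas the cross terms $\sum_{j}\sum_{|k-j|\le\alpha_n}r_{n,j}Z_{n,k}^\top$ are quadratic and involve $\mathcal O(\alpha_n^d)$ terms per $j$; there is no reason they should be $o(|\mathcal I_n|)$ under the stated hypotheses. (The filtration $\mathcal F_{n,m}$ you set up is also never used.)

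The paper avoids this by applying the Bolthausen--Stein method \emph{directly} to $S_n$, without splitting. One writes $(\ii v-\overline S_n)e^{\ii v^\top\overline S_n}=A_{n,1}-A_{n,2}-A_{n,3}$ with $\overline S_{n,j}=\Sigma_n^{-1/2}\sum_{|k-j|\le\alpha_n}Z_{n,k}$. Then $A_{n,1}$ is controlled by~\eqref{cvmatrix}, $A_{n,2}$ by a truncation using~(a) and~(b), and the crucial term
\[
A_{n,3}=\Sigma_n^{-1/2}\sum_{j\in\mathcal I_n}Z_{n,j}\,e^{\ii v^\top(\overline S_n-\overline S_{n,j})}
\]
is handled by noting that $\overline S_n-\overline S_{n,j}$ does not depend on $X_{n,j}$ (here the finite range of the \emph{original} $Z_{n,k}$ is essential), so conditioning on $(X_{n,k})_{k\ne j}$ yields $\|\EE A_{n,3}\|\le c\,|\mathcal I_n|^{-1/2}\sum_j\EE\|\EE(Z_{n,j}\mid X_{n,k},k\ne j)\|\to 0$ by~(d). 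The point is that~(d) enters only in a term \emph{linear} in $Z_{n,j}$, which is why the $L^1$-type bound it provides suffices; your decomposition moves the burden of~(d) to a quadratic term, where it no longer does.
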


Before detailing the proof, let us remark that if assumption~(a) is valid for any $q\geq 1$ then the result remains true if $\alpha_n^{(2+\varepsilon)d}=o(|\mathcal I_n|)$ for any $\varepsilon>0$.

\begin{proof} 
For $m,m^\prime=1,\ldots,p$, let $\Delta_{mm'}= ( \widehat \Sigma_n - \Sigma_n )_{mm^\prime}$. Let $q\geq 1$ be as in assumption~(a), the assertion~\eqref{cvmatrix} will be proved if we prove that $\EE\left( |\mathcal I_n|^{-2q} \Delta_{mm^\prime}^{2q} \right) =o(1)$. We have $\Delta_{mm'}= \sum_{j\in \mathcal I_n} U_{n,j}$ where   
\[
U_{n,j} = \sum_{k \in \mathcal I_{n,j}}
\big\{
Z_{n,j} Z_{n,k}^\top - \EE (Z_{n,j} Z_{n,k}^\top)
\big\}_{mm^\prime}
\]
and $\mathcal I_{n,j}=\{k\in \mathcal I_n: |k-j|\leq \alpha_n\}$.
Let $j_1,\dots,j_{2q} \in \mathcal I_n$ such that $|j_k-j_1|> 4\alpha_n$ for $k=2,\dots,2q$. It is clear that for any $j\in \mathcal I_n$, $U_{n,j}$ depends only on $X_{n,k}$ for $|k-j|\leq 2\alpha_n$. So,
\begin{align*}
  \EE (U_{n,j_1}\dots U_{n,j_{2q}}) &= \EE \left\{ 
\EE \left( U_{n,j_1}\dots U_{n,j_{2q}}  \mid \;  X_{n,k}, |k-j_1|>2\alpha_n \right)
  \right\} \\
  &= \EE \left\{ U_{n,j_2}\dots U_{n,j_{2q}}
 \EE \left( 
 U_{n,j_1} \mid \;  X_{n,k}, |k-j_1|>2\alpha_n 
\right)
  \right\} \\
  &= \EE \left\{U_{n,j_2}\dots U_{n,j_{2q}}
 \EE \left( 
 U_{n,j_1} \right)
  \right\} = 0
\end{align*}
whereby we deduce that 
\[
  \EE ( \Delta_{mm^\prime}^{2q}) = \sum_{\begin{subarray}{c}
    j_1,\dots,j_{2q}\in \mathcal I_n \\ |j_k-j_1|\leq 4\alpha_n, k=2,\dots,2q
  \end{subarray}}
  \EE ( U_{n,j_1} \dots U_{n,j_{2q}}).
\]
Now, by condition (a) and H\"older's inequality, we have for any $j\in \mathcal I_n$
\begin{align*}
\EE U_{n,j}^{2q} &= \sum_{
  k_1,\dots,k_{2q} \in \mathcal I_{n,j}}
\EE \left[ \left\{
Z_{n,j}Z_{n,k_1}^\top -\EE(Z_{n,j}Z_{n,k_1}^\top) \right\}_{mm^\prime}  \dots
\left\{Z_{n,j}Z_{n,k_{2q}}^\top -\EE(Z_{n,j}Z_{n,k_{2q}}^\top) \right\}_{mm^\prime} 
\right]\\
&\leq c  \sum_{ k_1,\dots,k_{2q} \in \mathcal I_{n,j}} \mu_{4q}\\
&\leq c \left(\sup_{j\in \mathcal I_n} \#\big\{ k\in \mathcal I_n, |k-j|\leq \alpha_n\big\} \right)^{2q}  = \mathcal O(\alpha_n^{2qd}).
\end{align*}
From H\"older's inequality, we continue with
\[
  \EE ( \Delta_{mm^\prime}^{2q}) \leq \sum_{\begin{subarray}{c}
    j_1,\dots,j_{2q}\in \mathcal I_n \\ |j_k-j_1|\leq 4\alpha_n, k=2,\dots,2q
  \end{subarray}}
  \EE ( U_{n,j_1}^{2q} )^{1/2q}\EE ( U_{n,j_{2q}}^{2q} )^{1/2q}
= \mathcal O \left\{ \alpha_n^{(4q-1)d} |\mathcal I_n|\right\}
\]
leading to 
\[
  \EE(|\mathcal I_n|^{-2q}\Delta_{mm'}^{2q}) = \mathcal O \left( \frac{\alpha_n^{(4q-1)d}}{|\mathcal I_n|^{2q-1}}\right) = o(1)
\]
by assumption on $\alpha_n$, which completes the proof of \eqref{cvmatrix}. 

We now focus on~\eqref{cvloi} and we let
\[
   \overline S_n=\Sigma_n^{-1/2}S_n, \quad S_{n,j}=\sum_{
 k\in \mathcal I_{n,j}
} Z_{n,k}
\quad \mbox{ and } \quad
\overline S_{n,j}= \Sigma_n^{-1/2} S_{n,j}
 \]  
where we recall the notation $\mathcal I_{n,j}=\mathcal K_{n,j}\cap \mathcal I_n$. According to Stein's method \citep[see][]{bolthausen:82}, in order to show \eqref{cvloi} it suffices to prove that for all $u\in \RR^p$ such that $\|u\|=1$ and for all $\omega\in \RR$
\[
  \EE \big\{ (\ii \omega-u^\top\overline S_n) e^{\ii \omega u^\top \overline S_n} 
  \big\} \to 0
\]
as $n\to \infty$ where $\ii = \sqrt{-1}$. Letting $v=\omega u$, this is equivalent to show that for all $v \in \RR^p
$, $\|\EE(A_n)\| \to 0$ where $A_n= (\ii v-\overline{S}_n) e^{\ii v^\top \overline S_n}$.  We decompose the term $A_n$ in the same spirit as \cite{bolthausen:82} : $A_n = A_{n,1} - A_{n,2} - A_{n,3}$ where
\begin{align*}
A_{n,1} &= \ii e^{\ii v^\top \overline S_n} (I_p- \Sigma_n^{-1/2} \sum_{j\in \mathcal I_n} Z_{n,j}S_{n,j}^\top \Sigma_n^{-1/2}) v \\
&= \ii e^{\ii v^\top \overline S_n} (I_p- \Sigma_n^{-1/2} \widehat \Sigma_n \Sigma_n^{-1/2}) v  \\
A_{n,2}&= e^{\ii v^\top \overline S_n} \Sigma_n^{-1/2} \sum_{j \in \mathcal I_n} Z_{n,j} (1-\ii v^\top \overline S_{n,j} - e^{-\ii v^\top \overline S_{n,j}}) \\
A_{n,3} &= \Sigma_n^{-1/2} \sum_{j \in \mathcal I_n} Z_{n,j} e^{\ii v^\top(\overline S_n -\overline S_{n,j})}
\end{align*}
and prove in the following that $\|\EE A_{n,r}\|\to 0$ for $r=1,2,3$ as $n\to \infty$. 

First, assumption (c) implies that $|\mathcal I_n|^{-1} \Sigma_n$ is a positive definite matrix for $n$ sufficiently large, which is now assumed in the following. By $\ell$ we denote the  constant $p/\lambda_{\min}(Q)$ where $\lambda_{\min}(M)$ stands for the smallest eigenvalue of a positive  definite squared matrix $M$. For $n$ sufficiently large, $\lambda_{\min}(|\mathcal I_n|^{-1}\Sigma_n) \geq \lambda_{\min}(Q)>0$ whereby we deduce
\begin{equation} \label{ell}
  \|\Sigma_n^{-1/2}\| = |\mathcal I_n|^{-1/2 } \tr(|\mathcal I_n| \Sigma_n^{-1} )^{1/2} \leq \ell^{1/2} |\mathcal I_n|^{-1/2}.
\end{equation}
Using this result, Jensen's inequality and the sub-multiplicative property of the Frobenius norm, we get for $q\geq 1$ satisfying (a) and the assumption on $\alpha_n$
\begin{align*}
\|\!\EE A_{n,1}\|^{2q} &\leq \|v\|^{2q} \EE \| I_p -\Sigma_n^{-1/2} \widehat \Sigma_n\Sigma_{n}^{-1/2} \|^{2q}  \\
& \leq \|v\|^{2q} \; \|\Sigma_n^{-1/2}\|^{4q} \EE \| \widehat{\Sigma}_n  - \Sigma_n \|^{2q}\\
&\leq \ell^{2q} \|v\|^{2q}  \,|\mathcal I_n|^{-2q} \EE \| \widehat{\Sigma}_n  - \Sigma_n \|^{2q}
\end{align*}
whereby we deduce that $\|\EE A_{n,1}\| \to 0$ from \eqref{cvmatrix}. 
 
Second, since $|1-e^{-\ii z}-\ii z|\leq z^2/2$ for any $z\in \RR$, we have
\begin{align*}
\|A_{n,2}\| & \leq \frac12 \|\Sigma_n^{-1/2}\| \; \sum_{j \in \mathcal I_n} \|Z_{n,j}\| (v^\top \overline S_{n,j})^2 \\
&\leq  \frac{\|v\|^2}2 \|\Sigma_n^{-1/2}\|^3 
\sum_{j\in \mathcal I_n}  B_{n,j}
\leq   \frac{\ell^{3/2}\|v\|^2}2 |\mathcal I_n|^{-3/2} \sum_{j\in \mathcal I_n} B_{n,j}
\end{align*}
where 
\[
B_{n,j} \, = \,  \|Z_{n,j}\| \;  \| \!\! \! \sum_{ k\in \mathcal I_{n,j} } Z_{n,k}  \|^2
\; = \; \|Z_{n,j}\|  
 \sum_{
  k,k^\prime \in \mathcal I_{n,j}}  Z_{n,k}^\top Z_{n,k^\prime} \, \geq 0.
\]
Let us decompose $B_{n,j}=B_{n,j}^{(1)}+B_{n,j}^{(2)}$ where $B_{n,j}^{(1)}= B_{n,j} \mathbf 1(\|Z_{n,j}\|\leq |\mathcal I_n|^\tau)$ and $B_{n,j}^{(2)}= B_{n,j} \mathbf 1(\|Z_{n,j}\| >  |\mathcal I_n|^{\tau})$ with $1/\tau=2(4p-1)$.  By assumption (b), we have
\begin{align}
\|\EE B_{n,j}^{(1)}\| & \leq  |\mathcal I_n|^{\tau} \sum_{
  k,k^\prime \in \mathcal I_{n,j}     
  }
  |\EE(Z_{n,k}^\top Z_{n,k^\prime})| = \mathcal O ( |\mathcal I_n|^{\tau} |\mathcal I_{n,j}|) =\mathcal O ( |\mathcal I_n|^{\tau} \alpha_n^d). \label{eq:B1}
\end{align}
By assumption (a), using H\"older  and Bienaym\'e-Chebyshev inequalities, we continue with
\begin{align}
\EE B_{n,j}^{(2)} &\leq \sum_{\begin{subarray}{c}
k,k^\prime \in \mathcal I_{n,j}     
  \end{subarray}} \EE \left\{   \|Z_{n,j}\| \,   \|Z_{n,k}\|  \,   \|Z_{n,k'}\|  \, \mathbf 1(\|Z_{n,j} >  |\mathcal I_n|^{\tau}\|)\right\}
\nonumber \\
&\leq \alpha_n^{2d}  \;\mu_{4q}^{\frac3{4q}} \;\PP ( \|Z_{n,j}\| >  |\mathcal I_n|^{\tau})^{\frac{4q-3}{4q}}
\nonumber  \\
&\leq  \alpha_n^{2d}\; \mu_{4q}^{\frac3{4q}} \; \left( 
|\mathcal I_n|^{-4q\tau}\EE \|Z_{n,j}\|^{4q}
\right)^{\frac{4q-3}{4q}} \nonumber \\
&\leq \mu_{4q} \; \alpha_n^{2d} |\mathcal I_n|^{-(4q-3)\tau}.\label{eq:B2}
\end{align}
Combining \eqref{eq:B1}-\eqref{eq:B2}, we deduce that as $n\to \infty$
\begin{align*}
  \|\EE A_{n,2}\|  \leq \EE \|A_{n,2}\|&= \mathcal O \left( \frac{\alpha_n^d}{|\mathcal I_n|^{1/2}}|\mathcal I_n|^{\tau}\right) \; + \; 
  \mathcal O \left(
  \frac{\alpha_n^{2d}}{|\mathcal I_n|^{1/2}} \,|\mathcal I_n|^{-(4q-3)\tau}
  \right) =o\left( 1\right)  
\end{align*}  
by definition of $\tau$ and $\alpha_n$.

 Third, for any $j\in \mathcal I_n$, $\overline S_n-\overline S_{n,j}$ does not depend on $X_{n,j}$. This yields
\begin{align*}
\EE A_{n,3} &= \Sigma_n^{-1/2} \sum_{j \in \mathcal I_n} \EE \big\{  
e^{\ii v^\top (\overline S_n-\overline S_{n,j})} \EE (Z_{n,j} \mid X_{n,k}, k \neq j) 
\big\} 
\end{align*}
whereby we deduce, in view of \eqref{ell}, that
\[
  \|\EE A_{n,3}\| \leq c |\mathcal I_n|^{-1/2} \sum_{j\in\mathcal I_n} \EE \left\|
 \EE \left(
Z_{n,j} \mid X_{n,k}, k \neq j
 \right)
  \right\|
\]
which tends to 0 by assumption~(d).
\end{proof}

\section{Auxiliary results} \label{sec:auxiliary}

We gather in this section several auxiliary results. They are established under the setting, assumptions and notation of Section~\ref{sec:IR}. In particular, we recall that  $\Delta_{j}$ is the cube centered at $j\in \ZZ^d$ with volume 1, $\Delta_{n,j}=\Delta_j \cap (\Wna)$, $\mathcal I_n\subset \ZZ^d$ is the set such that $\Wna=\cup_{j\in \mathcal I_n} \Delta_{n,j}$,   $B_{u,n}=B(u,\alpha_n)$ and for any $\Delta\Subset \RR^d$
\begin{align}
&  s_{\Delta} = s_{\Delta}(\bX;\theta^\star) =   \int_{\Delta} t(u,\bX_{B_{u,n}}) \lth(u,\bX_{B_{u,n}}) \dd u  - \sum_{u\in \bX_{\Delta}} t(u,\bXu \setminus u)  \label{defssprime1} \\
&  s'_{\Delta} = s'_{\Delta}(\bX;\theta^\star) =  \int_{\Delta} t(u,\bX_{B_{u,n}}) \lth(u,\bX) \dd u  -  \sum_{u\in \bX_{\Delta}} t(u,\bX_{B_{u,n}} \setminus u).  \label{defssprime2}
\end{align}

\begin{lemma} \label{lem:XWn} 
Let $j\in \mathcal I_n$ and $u\in \Delta_{n,j}$,  assume  \assg, set  
$\gamma'=\gamma_2-d-\varepsilon$ where $0<\varepsilon<\gamma_2-d$ and define
\begin{align*}
 |t_m|(u,\bx)& = \sum_{v \in \bx} |g_m(v-u)| \\
 G(u,\bx)& = \sum_{v \in \bx } \|v-u\|^{-\gamma_2} \mathbf 1(\|v-u\|\geq r_0) \\
 H(u,\bx) &= \sum_{v \in \bx } \|v-u\|^{-d-\varepsilon} \mathbf 1(\|v-u\|\geq r_0).
 \end{align*} 
Then,  if $\alpha_n\geq r_0$ 
\begin{itemize}
\item[(i)] $|t_m|(u,\bx_{B_{u,n}})|  \leq |t_m|(u,\bx)$
\item[(ii)] $|t_m(u,\bx) - t_m(u,\bx_{B_{u,n}})|  \leq c \min\{G(u,\bx), \alpha_n^{-\gamma'} H(u,\bx)\}$
\item[(iii)] $\forall \theta\in \Theta$, $|\lambda_\theta(u,\bx)-\lambda_\theta(u,\bx_{B_{u,n}})| \leq c\, e^{c\,G(u,\bx)} \lambda_\theta(u,\bx) \min\{G(u,\bx), \alpha_n^{-\gamma'} H(u,\bx)\}$.
\end{itemize}

\end{lemma}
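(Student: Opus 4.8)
The three assertions all reduce to the elementary polynomial-decay estimate \assg~(iii), applied to pairs of points separated by a distance at least $\alpha_n\ge r_0$; no use of the Gibbs structure is needed, only that $\bx\in\Omega$ so that the series at hand converge. Item (i) is immediate: since $\bx_{B_{u,n}}=\bx\cap B(u,\alpha_n)\subseteq\bx$ and each summand $|g_m(v-u)|$ is nonnegative, restricting the sum defining $|t_m|(u,\cdot)$ to the sub-configuration $\bx_{B_{u,n}}$ can only decrease its value.

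For (ii), I would start from $t_m(u,\bx)-t_m(u,\bx_{B_{u,n}})=\sum_{v\in\bx,\ \|v-u\|>\alpha_n}g_m(v-u)$, so that every point $v$ entering this sum satisfies $\|v-u\|>\alpha_n\ge r_0$ and hence $|g_m(v-u)|\le c\|v-u\|^{-\gamma_2}$ by \assg~(iii); temperedness of $\bx$ makes the resulting series absolutely convergent. From here two bounds follow. Bounding $\|v-u\|^{-\gamma_2}\le\|v-u\|^{-\gamma_2}\mathbf 1(\|v-u\|\ge r_0)$ and summing over all of $\bx$ gives $|t_m(u,\bx)-t_m(u,\bx_{B_{u,n}})|\le c\,G(u,\bx)$; writing instead $\gamma_2=\gamma'+(d+\varepsilon)$ and using $\|v-u\|>\alpha_n$ together with $\gamma'>0$ gives $\|v-u\|^{-\gamma_2}\le\alpha_n^{-\gamma'}\|v-u\|^{-(d+\varepsilon)}\mathbf 1(\|v-u\|\ge r_0)$, and summing yields $|t_m(u,\bx)-t_m(u,\bx_{B_{u,n}})|\le c\,\alpha_n^{-\gamma'}H(u,\bx)$. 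Taking the minimum of the two estimates gives (ii).

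For (iii), I set $D=\theta^\top\{t(u,\bx)-t(u,\bx_{B_{u,n}})\}=\sum_{m\ge2}\theta_m\{t_m(u,\bx)-t_m(u,\bx_{B_{u,n}})\}$, which is finite by (ii). Since $\lambda_\theta(u,\cdot)=e^{-\theta^\top t(u,\cdot)}$, one has $\lambda_\theta(u,\bx_{B_{u,n}})=\lambda_\theta(u,\bx)\,e^{D}$, hence $\lambda_\theta(u,\bx)-\lambda_\theta(u,\bx_{B_{u,n}})=\lambda_\theta(u,\bx)\,(1-e^{D})$ (the case $\lambda_\theta(u,\bx)=0$ being trivial, since $D$ finite then forces $\lambda_\theta(u,\bx_{B_{u,n}})=0$ as well). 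I then invoke the elementary inequality $|1-e^{D}|\le|D|\,e^{|D|}$. Boundedness of $\Theta$ and part (ii) give $|D|\le c\min\{G(u,\bx),\alpha_n^{-\gamma'}H(u,\bx)\}$, and since this minimum is at most $G(u,\bx)$ one also has $e^{|D|}\le e^{c\,G(u,\bx)}$; multiplying these bounds through by $\lambda_\theta(u,\bx)$ yields the claimed inequality.

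I do not expect a genuine obstacle here: the whole argument is bookkeeping built on the decay $\gamma_2>d$. The points needing a little care are the exact splitting $\gamma_2=\gamma'+(d+\varepsilon)$ in (ii), chosen so that truncation at radius $\alpha_n$ gains precisely the factor $\alpha_n^{-\gamma'}$ while leaving an absolutely summable kernel $\|\cdot\|^{-(d+\varepsilon)}$; verifying that $t_m(u,\bx)$, $G(u,\bx)$, $H(u,\bx)$ and $D$ are finite for $\bx\in\Omega$, which follows from temperedness together with $\gamma_2>d$; and making sure the (finitely many, uniformly $\Theta$-bounded) factors $|\theta_m|$ are absorbed into the generic constant $c$.
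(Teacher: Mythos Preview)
Your proposal is correct and follows essentially the same argument as the paper: part~(i) by monotonicity of nonnegative sums, part~(ii) by the tail bound $|g_m(v-u)|\le c\|v-u\|^{-\gamma_2}$ for $\|v-u\|\ge\alpha_n\ge r_0$ together with the splitting $\gamma_2=\gamma'+(d+\varepsilon)$, and part~(iii) via the factorisation $\lambda_\theta(u,\bx_{B_{u,n}})=\lambda_\theta(u,\bx)\,e^{D}$ and the inequality $|1-e^{D}|\le |D|\,e^{|D|}$, with $|D|$ controlled through~(ii) and the boundedness of~$\Theta$. The only cosmetic difference is that the paper phrases $D$ as $\sum_{v\in\bx_{B_{u,n}^c}}\Phi_\theta(v-u)$ rather than $\theta^\top\{t(u,\bx)-t(u,\bx_{B_{u,n}})\}$, which is the same quantity.
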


\begin{proof}
The first statement is straightforward from the definition. For the second one, from \assg~and since  $\alpha_n\geq r_0$, 
\begin{align*}
| t_m(u,\bx) - t_m(u,\bx_{B_{u,n}}) |  &=|\sum_{v \in \bx} g_m(v-u)  \mathbf 1(\|v-u\|\geq \alpha_n)|\\
&\leq  c \sum_{v \in \bx } \|v-u\|^{-\gamma_2} \mathbf 1(\|v-u\|\geq \alpha_n),
\end{align*}
which is clearly lower than $c\,  G(u,\bx)$. Pushing one step further, we get
\begin{align*}
|t_m(u,\bx) - t_m(u,\bx_{B_{u,n}})  | &\leq  c\, \alpha_n^{-\gamma'} \sum_{v \in \bx } \|v-u\|^{-d-\varepsilon} \mathbf 1(\|v-u\|\geq \alpha_n) \leq c\,   \alpha_n^{-\gamma'} H(u,\bx),
\end{align*}
which proves (ii). For the third statement, since for all $x$, $|1-e^x|< |x| e^{|x|}$, we have
\begin{align*}
|\lambda_\theta(u,\bx)-\lambda_\theta(u,\bx_{B_{u,n}})| &= \lambda_\theta(u,\bx) 
\Big| 1-  e^{\sum_{v\in \bx_{B_{u,n}^c}}\Phi_\theta(v-u)} \Big| \\
&\leq  \lambda_\theta(u,\bx)  \Big|\sum_{v\in \bx_{B_{u,n}^c}}\Phi_\theta(v-u)\Big| e^{\big|\sum_{v\in \bx_{B_{u,n}^c}}\Phi_\theta(v-u)\big|}.
\end{align*}
The result follows from the same inequalities as before, noting that  
\[ \Big|\sum_{v\in \bx_{B_{u,n}^c}}\Phi_\theta(v-u) \Big|= \Big|\sum_{m=2}^p  \theta_m \sum_{v\in \bx_{B_{u,n}^c}} g_m(v-u) \Big|\leq c \sum_{v \in \bx} |g_m(v-u)|  \mathbf 1(\|v-u\|\geq \alpha_n)\]
where $c=(p-1) \sup_{\theta\in \Theta} \sup_m |\theta_m|<\infty$, since $\Theta$ is bounded. 
\end{proof}

\medskip

\begin{lemma}\label{lem:int} 
  Under the assumption \assg, then for any $\theta\in \Theta$ we have the following statements where $\EE$ denotes the expectation with respect to $P_{\theta^\star}$.
  \begin{itemize}
 \item[(i)] For any $q\geq 0$, $ \EE \{ \lambda_\theta(0,\bX)^{q} \} < \infty$.
  \item[(ii)] Let $f : \RR^d \to \RR$  be  a measurable function such that $|f (u)|\leq c(1+\|u\|)^{-\gamma}$ with $\gamma>d$,  then for any $q\geq 0$ 
  \[\EE\left\{e^{q \left |\sum_{u\in \bX} f(\|u\|)\right |}\right \}<\infty.
  \]
  \item[(iii)] For any $q\geq 0$, $q'>0$ and $\theta\in \Theta$,  $\EE \{ |t_m|(0,\bX)^q \lambda_\theta(0,\bX)^{q'} \} < \infty$.
  \item[(iv)] Let $f_1$ and $f_2$ be two functions as in (ii), then for any $q_1,q_2,q_3\geq 0$ and $q'>0$, 
   \[\EE\left\{  |t_m|(0,\bX)^{q_1} \left |\sum_{u\in \bX} f_1(\|u\|)\right |^{q_2} e^{q_3 \left |\sum_{u\in \bX} f_2(\|u\|)\right |} \lambda_\theta(0,\bX)^{q'} \right \}<\infty.\]

  \end{itemize}
\end{lemma}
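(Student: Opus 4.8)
Under \assg\ the potential $\Phi_{\theta^\star}$ is superstable and lower regular (\assPhi), and the stationary Gibbs measure $P_{\theta^\star}$ is tempered, so by \citet{ruelle:69} its correlation functions are uniformly bounded: $\rho^{(k)}(x_1,\dots,x_k)\le\xi^k$ for some $\xi>0$ and all $k\ge1$. The engine of the whole proof is the elementary consequence that for every measurable $\phi:\RR^d\to[0,\infty)$ with $\int_{\RR^d}\phi<\infty$,
\[
  \EE\Big\{\textstyle\prod_{v\in\bX}(1+\phi(v))\Big\}\ \le\ \exp\Big(\xi\int_{\RR^d}\phi(v)\,\dd v\Big),
\]
obtained by expanding the finite product $\prod_{v\in\bX_\Lambda}(1+\phi(v))$ over a bounded window $\Lambda$, evaluating the expectation of its order-$k$ term through the $k$-th correlation function, bounding it by $\xi^k(\int\phi)^k/k!$, summing over $k$, and letting $\Lambda\uparrow\RR^d$ by monotone convergence. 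In particular every local count of $\bX$ has finite exponential moments of all orders, and the elementary inequality $\sup_{x\ge0}x^qe^{-q'x}<\infty$ for $q'>0$ will be used repeatedly.

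Parts (i) and (ii) then follow directly. For (ii), since $f$ is bounded, $e^{q|\sum_{v\in\bX}f(\|v\|)|}\le\prod_{v\in\bX}e^{q|f(\|v\|)|}=\prod_{v\in\bX}(1+\phi(v))$ with $\phi(v)=e^{q|f(\|v\|)|}-1\ge0$ and $\int\phi\le q e^{q\|f\|_\infty}\int_{\RR^d}(1+\|v\|)^{-\gamma}\,\dd v<\infty$ since $\gamma>d$; apply the engine bound. For (i), write $\lambda_\theta(0,\bx)^q=\beta^q\prod_{v\in\bx}e^{-q\Phi_\theta(v)}$ and dominate $-\Phi_\theta(v)$, uniformly over $\theta\in\Theta$, by a bounded integrable $\psi\ge0$: take $\psi\equiv0$ on a small ball where $\Phi_\theta>0$ by assumption (i) of \assg, $\psi$ constant on an intermediate annulus where $\Phi_\theta$ is bounded below, and $\psi(v)=c\|v\|^{-\gamma_2}$ for large $\|v\|$ by assumption (iii) of \assg. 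Then $\lambda_\theta(0,\bX)^q\le\beta^q\prod_{v\in\bX}\bigl(1+(e^{q\psi(v)}-1)\bigr)$ and the engine bound applies since $\int(e^{q\psi}-1)<\infty$.

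The substantive part is (iii)-(iv); as (iii) is (iv) with $q_2=q_3=0$, I would prove only (iv). The mechanism is the competition between the divergence of $|g_m(v)|$ as $v\to0$ and the superexponential smallness of $\lambda_\theta(0,\bX)$ in that regime. Using assumptions (i) and (ii) of \assg, there are $r_0'\in(0,r_0]$ and $C>0$ such that for all $\theta\in\Theta$ and all $\|v\|<r_0'$ one has $\Phi_\theta(v)\ge\tfrac{c_g}{2}\|v\|^{-\gamma_1}>0$ and $|g_m(v)|\le C\,\Phi_\theta(v)$; the uniformity in $\theta$ rests on $0<\inf_\Theta|\theta_2|\le\sup_\Theta|\theta_2|<\infty$, forced by assumption (i) of \assg\ and the boundedness of $\Theta$. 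Splitting $|t_m|(0,\bX)\le T_{\mathrm{near}}+T_{\mathrm{ann}}+T_{\mathrm{far}}$ along $\|v\|<r_0'$, $r_0'\le\|v\|<r_0$, $\|v\|\ge r_0$, I would bound $T_{\mathrm{near}}\le C\,E$ with $E:=\sum_{v\in\bX,\,\|v\|<r_0'}\Phi_\theta(v)\ge0$, $T_{\mathrm{ann}}\le C'N_{\mathrm{ann}}(\bX)$ (a local count, using boundedness of $g_m$ on the compact annulus), and $T_{\mathrm{far}}\le c\,S$ with $S:=\sum_{v\in\bX,\,\|v\|\ge r_0}\|v\|^{-\gamma_2}$ a sum of the type handled by (ii). Since $\lambda_\theta(0,\bX)^{q'}=\beta^{q'}e^{-q'\sum_{v\in\bX}\Phi_\theta(v)}\le\beta^{q'}e^{-q'E}e^{q'M N_{\mathrm{ann}}(\bX)}e^{q'cS}$ (with $-M$ a uniform lower bound for $\Phi_\theta$), using $(a+b+c)^{q_1}\le c_{q_1}(a^{q_1}+b^{q_1}+c^{q_1})$ and $E^{q_1}e^{-q'E}\le\sup_{x\ge0}x^{q_1}e^{-q'x}<\infty$ — which is exactly where $q'>0$ enters — the integrand of (iv) is bounded by a constant times
\[
  \bigl(1+N_{\mathrm{ann}}(\bX)^{q_1}+S^{q_1}\bigr)\,\bigl|\textstyle\sum_{v\in\bX}f_1(\|v\|)\bigr|^{q_2}\,e^{q_3|\sum_{v\in\bX}f_2(\|v\|)|}\,e^{q'M N_{\mathrm{ann}}(\bX)}\,e^{q'cS}.
\]
Taking expectations, $N_{\mathrm{ann}}(\bX)$ has all polynomial and exponential moments by the engine bound, while all powers and exponentials of the fast-decaying sums $S$, $\sum f_1$, $\sum f_2$ have finite expectation by (ii) (bounding $x^q\le c_q e^x$ to convert powers into exponentials); Hölder's inequality with enough factors assembles the pieces, yielding (iv) and hence (iii).

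The main obstacle, apart from importing Ruelle's bound (the indispensable input), is this competition argument in (iii)-(iv): recognising that the only dangerous configurations are those with points accumulating near the origin, that on such configurations the blow-up of $|t_m|(0,\bX)$ is quenched by the factor $e^{-q'\sum_{\|v\|<r_0'}\Phi_\theta(v)}$ whose exponent tends to $+\infty$ — whence the necessity of $q'>0$ — and making the pointwise domination $|g_m(v)|\le C\Phi_\theta(v)$ uniform in $\theta$ on a fixed small ball, even though \assg\ only provides it asymptotically. The intermediate annulus, where \assg\ gives no upper bound on $g_m$, is dispatched by local boundedness of $g_m$ away from the origin.
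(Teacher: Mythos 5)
Your proof is correct, and for parts (i) and (ii) it coincides with the paper's: the only external input is Ruelle's uniform bound on the correlation functions of a superstable, lower regular, tempered Gibbs measure — the paper channels it through the statement that $\EE\, e^{q \sum_{u\in\bX}\psi(\|u\|)}<\infty$ for decreasing integrable $\psi$ (Proposition 5.2(a) of Ruelle, 1970, and Lemma 2 of Mase, 1995), which is exactly your ``engine'' inequality $\EE\prod_{v\in\bX}\{1+\phi(v)\}\le e^{\xi\int\phi}$. For the substantive parts (iii)--(iv) you take a genuinely different route to the same destination. The paper absorbs the entire polynomial factor globally: it writes $|t_m|(0,\bx)^q\lambda_\theta(0,\bx)^{q'}\le c\,e^{-q'\sum_{u\in\bx}\tilde\Phi_\theta(u)}$ with $\tilde\Phi_\theta=\Phi_\theta-\varepsilon|\theta_m g_m|$, using $x^qe^{-\kappa x}\le c$, and then verifies that the perturbed potential $\tilde\Phi_\theta$ still satisfies \assPhi, so that (iii) becomes an instance of (i) for a modified potential and (iv) follows by H\"older. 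You instead localize the difficulty: dominating $|g_m|$ by $C\Phi_\theta$ on a fixed small ball (via \assg~(i)--(ii) and the boundedness of $\Theta$), splitting near/annulus/far, and applying $x^{q_1}e^{-q'x}\le c$ only to the near-origin energy. Both implementations hinge on the same competition — the singularity of $g_m$ at the origin quenched by the superexponential smallness of $\lambda_\theta$, which is where $q'>0$ is indispensable. The paper's reduction is shorter and recycles (i) verbatim at the cost of re-checking superstability for $\tilde\Phi_\theta$; yours is more explicit about the mechanism and about uniformity in $\theta$. One shared caveat: your bound $T_{\mathrm{ann}}\le C'N_{\mathrm{ann}}$ uses boundedness of $g_m$ on the intermediate annulus, which \assg~does not state explicitly — but the paper's proof needs the very same fact to conclude that $\tilde\Phi_\theta$ is bounded from below, so this is an implicit assumption of the model rather than a gap in your argument.
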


\begin{proof}
The first statement is a consequence of Proposition~5.2~(a) in \cite{A-Rue70}.
It relies on the following property, see also \citet[Lemma~2]{mase:95}. If   $\psi : \RR^+ \to \RR^+$ is a decreasing function with $\int_0^\infty \psi(t) t^{d-1}\dd t <\infty$, then for any $q\geq 0$, 
\begin{equation*}\label{intpsi}
\EE(e^{q \sum_{u\in \bX} \psi(\|u\|)})<\infty.
\end{equation*}
The proof of (ii) is an easy consequence of this property. We deduce in particular that all moments of $\sum_{u\in \bX} f(\|u\|)$ exist and are finite. Assuming (iii) is true, then (iv) is a straightforward consequence of the previous properties and H\"older's inequality. 
Let us prove (iii). For any $\varepsilon>0$, using the fact that  for any $q\geq 0,\kappa>0$, $x\mapsto x^q e^{-\kappa x}$ is bounded on $[0,\infty)$, we have
\begin{align*} |t_m|(0,\bx)^q \lambda_\theta(0,\bx)^{q'} & = |t_m|(0,\bx)^q  e^{-{q'} \sum_{k=2}^p \theta_k t_k(0,\bx) }\\
& =   |t_m|(0,\bx)^q e^{-{q'} \varepsilon |\theta_m  t_m(0,\bx) |}  e^{{q'}  \varepsilon |\theta_m  t_m(0,\bx)| -{q'} \sum_{k=2}^p \theta_k t_k(0,\bx) }\\
&\leq c \  e^{- {q'}\sum_{u\in\bx}  \tilde\Phi_\theta(u)},
\end{align*}
where $\tilde\Phi_\theta(u) = \sum_{k=2}^p \theta_k g_k(u) - \varepsilon |\theta_m g_m(u)|$. The proof of (iii) is completed in view of (i) if we show that $\tilde\Phi_\theta$ satisfies \assPhi~for any $\theta$. Write $\tilde\Phi_\theta(u)= \tilde\Phi_1(u) + \tilde\Phi_2(u)$ with 
\[\tilde\Phi_1(u) = \frac{\theta_2} 2 g_2(u) + \sum_{k=3}^p \theta_k g_k(u),\quad    \tilde\Phi_2(u) = \frac{\theta_2} 2 g_2(u)  - \varepsilon |\theta_m g_m(u)|.  \]
From  \assg, we deduce that there exists $r>0$ such that $\|u\|<r$ implies $\tilde\Phi_1(u) > c\|u\|^{-\gamma_1}$. 
Moreover if $m=2$, $\tilde \Phi_2(u) >0$ for all $\|u\|<r_0$, provided $\varepsilon<1/2$. If $m\geq 3$, there exists $r'$ such that $\|u\|<r'$ implies $ |\theta_m g_m(u)|< c_g \|u\|^{-\gamma_1}/(4\varepsilon)$ where $c_g$ is the constant in \assg, yielding $\tilde\Phi_2(u) > (c_g/4) \|u\|^{-\gamma_1}$. In all cases, we obtain that for some $r_1>0$,  $\|u\|<r_1$ implies $\tilde\Phi(u) > c\|u\|^{-\gamma_1}$. On the other hand, it is clear that  if $\|u\|>r_0$ then $|\tilde \Phi(u)|\leq c \|u\|^{-\gamma_2}$ and that $\tilde \Phi_\theta$ is bounded from below, proving that it  satisfies \assPhi.
\end{proof}

\medskip

\begin{lemma}\label{moments} 
Let $j\in \mathcal I_n$ and $s_m=(s_{\Delta_{n,j}})_m$, respectively $s'_m=(s'_{\Delta_{n,j}})_m$, be the $m$-th coordinate of $s_{\Delta_{n,j}}$ given by  \eqref{defssprime1}, respectively $s'_{\Delta_{n,j}}$ given by \eqref{defssprime2}. Under  \assg, if $\alpha_n\geq r_0$ then, for any $q\in\NN$, $\EE (|s_m|^q)<\infty$ and $\EE (|s'_m|^q)<\infty$.
\end{lemma}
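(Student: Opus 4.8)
The plan is to split $s_{\Delta_{n,j}}$, and likewise $s'_{\Delta_{n,j}}$, into its integral term and its point-sum term and to bound the $q$-th moment of each, using $|a-b|^q\le 2^{q-1}(|a|^q+|b|^q)$; the point-sum term $\sum_{u\in\bX_{\Delta_{n,j}}}t(u,\bX_{B_{u,n}}\setminus u)$ is common to the two definitions. The elementary ingredients I would use throughout are: Lemma~\ref{lem:XWn}(i), and its evident analogue $|t_m(u,\bx_{B_{u,n}}\setminus u)|\le|t_m|(u,\bx_{B_{u,n}}\setminus u)\le|t_m|(u,\bx\setminus u)$ for $u\in\bx$; Lemma~\ref{lem:XWn}(iii), which (since $\min\{G,\alpha_n^{-\gamma'}H\}\le G$ and $\alpha_n\ge r_0$) gives $\lambda_{\theta^\star}(u,\bx_{B_{u,n}})\le\lambda_{\theta^\star}(u,\bx)\{1+c\,e^{c\,G(u,\bx)}G(u,\bx)\}$; Jensen's inequality together with $|\Delta_{n,j}|\le1$, so that $(\int_{\Delta_{n,j}}\phi)^q\le\int_{\Delta_{n,j}}\phi^q$; the stationarity of $\bX$; and the integrability statements of Lemma~\ref{lem:int}.

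For the integral term this is quick. In the case of $s'_{\Delta_{n,j}}$, the $m$-th coordinate integrand $t_m(u,\bX_{B_{u,n}})\lth(u,\bX)$ is dominated by $|t_m|(u,\bX)\lth(u,\bX)$, so its $q$-th moment is at most $\EE\{|t_m|(0,\bX)^q\lth(0,\bX)^q\}<\infty$ by Lemma~\ref{lem:int}(iii). For $s_{\Delta_{n,j}}$ the integrand carries the additional factor $1+c\,e^{c\,G(u,\bX)}G(u,\bX)$; expanding its $q$-th power leaves finitely many terms of the form $\EE\{|t_m|(0,\bX)^q\,G(0,\bX)^{q_2}\,e^{q_3 G(0,\bX)}\lth(0,\bX)^q\}$ with $q_2,q_3\le q$, each finite by Lemma~\ref{lem:int}(iv) applied with $f_1=f_2:t\mapsto t^{-\gamma_2}\mathbf 1(t\ge r_0)$, a function of the kind required there since $\gamma_2>d$.

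The substance of the proof is the point-sum term: by the monotonicity noted above it suffices to show $\EE(S^q)<\infty$ with $S=\sum_{u\in\bX_{\Delta_{n,j}}}|t_m|(u,\bX\setminus u)$. Writing $S^q=\sum_{(u_1,\dots,u_q)\in\bX_{\Delta_{n,j}}^q}\prod_i|t_m|(u_i,\bX\setminus u_i)$ and grouping the $q$-tuples according to the partition of $\{1,\dots,q\}$ that records which $u_i$ coincide, $\EE(S^q)$ becomes a finite linear combination of quantities $\EE\{\sum^{\ne}_{w_1,\dots,w_k\in\bX_{\Delta_{n,j}}}\prod_{r=1}^k|t_m|(w_r,\bX\setminus w_r)^{a_r}\}$ with $a_1+\dots+a_k=q$. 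To each I would apply the GNZ formula~\eqref{eq:GNZ} $k$ times — first with $|t_m|$ replaced by $|t_m|\wedge M$ so that the expectations involved are finite, then letting $M\to\infty$ by monotone convergence — which for the pairwise model~\eqref{eq:model} gives
\begin{multline*}
\EE\Big\{\sum^{\ne}_{w_1,\dots,w_k\in\bX_{\Delta_{n,j}}}\prod_{r=1}^k|t_m|(w_r,\bX\setminus w_r)^{a_r}\Big\}\\
=\int_{\Delta_{n,j}^k}\EE\Big\{\prod_{r=1}^k|t_m|\big(w_r,\bX\cup\{w_s:s\ne r\}\big)^{a_r}\\
\times\prod_{i=1}^k\lth(w_i,\bX)\prod_{1\le i<l\le k}e^{-\Phi_{\theta^\star}(w_l-w_i)}\Big\}\,\dd w_1\cdots\dd w_k,
\end{multline*}
since for a pairwise model $\lth(w_i,\bX\cup\{w_{i+1},\dots,w_k\})=\lth(w_i,\bX)\prod_{l=i+1}^k e^{-\Phi_{\theta^\star}(w_l-w_i)}$. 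Expanding $|t_m|(w_r,\bX\cup\{w_s:s\ne r\})=|t_m|(w_r,\bX)+\sum_{s\ne r}|g_m(w_s-w_r)|$ by the multinomial formula and regrouping, the integrand becomes a finite sum of terms in which each unordered pair $\{r,s\}$ carries a factor $|g_m(w_s-w_r)|^{c_{rs}}|g_m(w_r-w_s)|^{c_{sr}}e^{-\Phi_{\theta^\star}(w_s-w_r)}$ with $c_{rs},c_{sr}\le q$, and the remainder of the integrand is $\prod_{r=1}^k|t_m|(w_r,\bX)^{b_r}\lth(w_r,\bX)$ with $b_r\le q$.

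The main obstacle — and the reason the conditional-intensity factors are indispensable — is controlling the diagonal singularities: the maps $|g_m(w_s-w_r)|$ blow up like $\|w_s-w_r\|^{-\gamma_1}$ near the diagonal, and $\int_{\Delta_{n,j}^2}\|w_s-w_r\|^{-\gamma_1}\,\dd w_r\,\dd w_s=\infty$. I would dispose of this by absorbing each singularity into its companion factor $e^{-\Phi_{\theta^\star}(w_s-w_r)}$: under \assg, using that $\Phi_{\theta^\star}(w)\ge\tfrac12\theta_2^\star g_2(w)$ for $\|w\|$ small and $\theta_2^\star g_2(w)\ge c_g\|w\|^{-\gamma_1}>0$ there, while $\Phi_{\theta^\star}$ is bounded from below everywhere and $g_m$ is bounded away from the origin, the product of $e^{-\Phi_{\theta^\star}(w)}$ with any fixed power of $1+|g_m(w)|+|g_m(-w)|$ is a bounded function of $w$ (near the origin the exponential decay beats the polynomial blow-up of $g_m$; elsewhere both factors are bounded). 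Hence each pairwise factor is bounded uniformly in $(w_r,w_s)$. What is left inside the expectation is $\EE\{\prod_{r=1}^k|t_m|(w_r,\bX)^{b_r}\lth(w_r,\bX)\}$, which by Hölder's inequality with exponent $k$ and stationarity is at most $\prod_{r=1}^k\EE\{|t_m|(0,\bX)^{b_rk}\lth(0,\bX)^{k}\}^{1/k}<\infty$ by Lemma~\ref{lem:int}(iii), uniformly in $(w_1,\dots,w_k)$. Integrating over $\Delta_{n,j}^k$, which has volume at most one, gives a finite bound for each term, and summing over the finitely many partitions yields $\EE(S^q)<\infty$, which completes the proof.
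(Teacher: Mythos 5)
Your proof is correct and follows essentially the same route as the paper's: split off the integral term (handled via Lemma~\ref{lem:XWn} and Lemma~\ref{lem:int}), reduce the point-sum term to factorial moments via the iterated GNZ formula, expand $t_m(u_\ell,\cdot\cup\{\mathbf u\setminus u_\ell\})$ multinomially, absorb the $|g_m(w_s-w_r)|$ singularities into the pairwise factors $e^{-\Phi_{\theta^\star}(w_s-w_r)}$ using \assg, and finish with H\"older and stationarity. The only differences are cosmetic: the paper cites a ready-made higher-order GNZ identity (Decreusefond--Flint) where you re-derive it by iteration with a truncation argument, and you dominate $|t_m|(u,\bX_{B_{u,n}}\setminus u)$ by $|t_m|(u,\bX\setminus u)$ at the outset rather than carrying the localized statistic through.
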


\begin{proof}

The proof being similar for $s_m$ and $s'_m$, we only give the details concerning $s_m$. From  \eqref{defssprime1} and the binomial formula, the statement is a consequence of 
\[\EE\left\{ \left|\sum_{u \in \bX_{\Delta_{n,j}}} t_m(u,\bX_{B_{u,n}}\setminus u)\right|^{p_1}  \left|\int_{\Delta_{n,j}} t_m(u,\bX_{B_{u,n}}) \lth(u,\bX_{B_{u,n}})\dd u\right|^{p_2} \right\}<\infty\]
for any $p_1,p_2\in\NN$. Applying the Cauchy-Schwarz's inequality, we consider each term above separately. First, for any $p\in\NN$,  by H\"older's inequality and using Lemma~\ref{lem:XWn} we get
\begin{align*}
\EE\bigg\{ \bigg|\int_{\Delta_{n,j}} & t_m(u,\bX_{B_{u,n}}) \lth(u,\bX_{B_{u,n}})\dd u\bigg|^{p} \bigg\}\\
& \leq c \int_{\Delta_{n,j}} \EE \left\{\left| t_m(u,\bX_{B_{u,n}})\right|^p  \lth(u,\bX_{B_{u,n}})^p \right\}\dd u\\
& \leq c \int_{\Delta_{n,j}} \EE \left[\left| t_m(u,\bX)\right|^p \left\{\lth(u,\bX)^p + |\lth(u,\bX_{B_{u,n}}) -  \lth(u,\bX)|^p\right\}\right]\dd u\\
& \leq c \int_{\Delta_{n,j}} \EE \left[\left| t_m(u,\bX)\right|^p \lth(u,\bX)^p 
\left\{1+  G^p(u,\bx)e^{c\, p\, G(u,\bx)}\right\}\right]\dd u
\end{align*}
 which is finite by Lemma~\ref{lem:int} and the stationarity of $\bX$.

Second, we can prove by induction and successive application of the GNZ formula, see Corollary~3.1 in \cite{DecreusefondFlint14}, that
\begin{multline*}
\EE\left[ \left\{\sum_{u \in \bX_{\Delta_{n,j}}} t_m(u,\bX_{B_{u,n}}\setminus u)\right\}^{p}\right] \\ 
=  \sum_{k=1}^p \sum_{(\mathcal P_1,\dots,\mathcal P_k)\in\mathcal T_p^k} \EE \int_{\Delta_{n,j}^k}  \lth(\{u_1,\dots,u_k\},\bX) \prod_{\ell=1}^k t_m^{|\mathcal P_\ell |} (u_\ell, \bX_{B_{u,n}}\cup \{\mathbf u \setminus u_\ell \}  ) \dd \mathbf u
\end{multline*}
where $\mathcal T_p^k$ is the set of all partitions of $\{1,\dots,p\}$ into $k$ subsets, $|\mathcal P|$ is the cardinality of $\mathcal P$, $\mathbf u = (u_1,\dots,u_k)$ and $\mathbf u \setminus u_\ell = (u_1,\dots,u_{\ell-1},u_{\ell+1},\dots,u_k)$. Since
\[ \lth(\{u_1,\dots,u_k\},\bX) =  \prod_{\ell=1}^k \lth(u_\ell,\bX) \prod_{i=1,i\neq \ell}^k e^{-\Phi_{\theta^\star}(u_i-u_\ell)},\]
we obtain by application of H\"older's inequality, 
\begin{align*}
&\EE\left\{ \left|\sum_{u \in \bX_{\Delta_{n,j}}} t_m(u,\bX_{B_{u,n}}\setminus u)\right|^{p}\right\}\leq \\
& \sum_{k=1}^p \sum_{\mathcal P\in\mathcal T_p^k} \prod_{\ell=1}^k  \EE^{1/k} \int_{\Delta_{n,j}^k}   \lth(u_\ell,\bX)^k |t_m (u_\ell, \bX_{B_{u,n}}\cup\{\mathbf u \setminus u_\ell \})|^{k|\mathcal P|} \prod_{i=1,i\neq \ell}^k e^{-k\Phi_{\theta^\star}(u_i-u_\ell)} \dd \mathbf u. \end{align*}
The proof is completed if we show that all expectations above are finite.  To that end, note that 
\[t_m (u_\ell, \bX_{B_{u,n}}\cup \{\mathbf u \setminus u_\ell \}) = t_m(u_\ell,\bX_{B_{u,n}}) + \sum_{h=1,h\neq \ell}^k g_m(u_h-u_\ell)\]
whereby, denoting $q=k|\mathcal P|$
\begin{align*}
&\EE \int_{\Delta_{n,j}^k}   \lth(u_\ell,\bX)^k |t_m (u_\ell, \bX_{B_{u,n}}\cup\{\mathbf u \setminus u_\ell \})|^{q} \prod_{i=1,i\neq \ell}^k e^{-k\Phi_{\theta^\star}(u_i-u_\ell)} \dd \mathbf u \\
& \leq \sum_{r=0}^{q} {q \choose r}  \int_{\Delta_{n,j}^k}    \left|\sum_{h=1,h\neq \ell}^k g_m(u_h-u_\ell)\right|^r \times \\
&\qquad\qquad\qquad \prod_{i=1,i\neq \ell}^k e^{-k\Phi_{\theta^\star}(u_i-u_\ell)} \EE\left\{|t_m(u_\ell,\bX_{B_{u,n}})|^{q-r}  \lth(u_\ell,\bX)^k\right\}  \dd \mathbf u \\
&  \leq c\sum_{r=0}^{q} \sum_{h=1,h\neq \ell}^k   \int_{\Delta_{n,j}^k}    |g_m(u_h-u_\ell)|^r \times \\
&\qquad\qquad\qquad\prod_{i=1,i\neq \ell}^k e^{-k\Phi_{\theta^\star}(u_i-u_\ell)} \EE\left\{|t_m(u_\ell,\bX_{B_{u,n}})|^{q-r}  \lth(u_\ell,\bX)^k\right\}  \dd \mathbf u.
 \end{align*}
 The  last expectation is finite in view of Lemma~\ref{lem:int}, so the above expression is lower than
 \begin{align*} 
  c \sum_{r=0}^{q} \sum_{h=1,h\neq \ell}^k  & \int_{\Delta_{n,j}^k}    |g_m(u_h-u_\ell)|^r \prod_{i=1,i\neq \ell}^k e^{-k\Phi_{\theta^\star}(u_i-u_\ell)} \dd \mathbf u\\
 & \leq c \left\{  \int_{\RR^d}  e^{-k\Phi_{\theta^\star}(v)} \dd v \right\}^{k-2} \sum_{r=0}^{q}   \  \int_{\RR^d}    |g_m(v)|^r e^{-k\Phi_{\theta^\star}(v)} \dd v,
\end{align*}
which is finite from \assg.
\end{proof}

\medskip

\begin{lemma} \label{lem:cov}
The following properties hold under the assumption \assg.
\begin{itemize}
\item[(i)] For $\Lambda_1,\Lambda_2$ two bounded Borel sets of $\RR^d$
\begin{align*}
& \Cov( s'_{\Lambda_1} ,s'_{\Lambda_2}  ) = \EE \int_{\Lambda_1\cap \Lambda_2} 
 t(u,\bX_{B_{u,n}})t(u,\bX_{B_{u,n}})^\top \lth(u,\bX) \dd u \\
&+  \EE \int_{\Lambda_1}\int_{\Lambda_2} t(u,\bX_{B_{u,n}})t(v,\bX_{B_{v,n}})^\top \{ \lth(u,\bX)\lth(v,\bX)  - \lth(\{u,v\},\bX)\} \dd u\dd v \\
&+ \EE \int_{\Lambda_1}\int_{\Lambda_2}
\Delta_v t(u,\bX_{B_{u,n}}) \{\Delta_u t(v,\bX_{B_{v,n}})\}^\top
\lth(\{u,v\},\bX)\dd u\dd v
\end{align*}
where for any $u,v\in \RR^d$, $\bx \in \Omega$ and any measurable function $f:\RR^d\times \Omega \to \RR^p$, the difference operator $\Delta_v$ is defined by $\Delta_v f(u,\bx)= f(u,\bx\cup v)-f(u,\bx)$. \\

\item[(ii)] Let $(\Delta_n)$ be a sequence of increasing domains such that $\Delta_n\to \RR^d$ as $n\to \infty$, then 
\[
  |\Delta_n|^{-1} \Var (s'_{\Delta_n}) \to \Sigma_\infty    
\]  
where $\Sigma_\infty$ is defined by \eqref{eq:SigmaInfty}.

\item[(iii)]  Let $j,k\in \mathcal I_n$. Then if 
$|k-j|>2 r_0$,
\[
  \left\|\Cov(s'_{\Delta_{n,j}},s'_{\Delta_{n,k}}) \right\|\leq c |k-j|^{-\gamma_2}.
\] 
\item[(iv)] For  any $j,k\in   \mathcal I_n$, if  $\alpha_n\geq r_0$, then 
 \begin{align*}
 \left\| \Cov( s_{\Delta_{n,j}} ,s_{\Delta_{n,k}}) -\Cov( s'_{\Delta_{n,j}} ,s'_{\Delta_{n,k}})\right\|  &\leq 
\frac{c}{\alpha_n^{\gamma^\prime} (1+|k-j|^{\gamma_2})} + \frac{c}{\alpha_n^{2\gamma^\prime}}
\end{align*}
as $n\to\infty$, where we recall that $\gamma'=\gamma_2-d-\varepsilon$ with $0<\varepsilon<\gamma_2-d$.
\end{itemize}
\end{lemma}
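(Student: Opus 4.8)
We treat the four items in order, relying throughout on the GNZ formula~\eqref{eq:GNZ}, its conditional version~\eqref{condGNZ}, and the moment bounds of Lemmas~\ref{lem:XWn}--\ref{moments}.

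*(i)* The plan is to expand $\Cov(s'_{\Lambda_1},s'_{\Lambda_2})=\EE(s'_{\Lambda_1}(s'_{\Lambda_2})^\top)-\EE s'_{\Lambda_1}\EE(s'_{\Lambda_2})^\top$, where by the GNZ formula the second term vanishes since $\EE s'_\Lambda=0$. Writing $s'_\Lambda$ as the difference of an integral term and a sum over $\bX_\Lambda$, the product $s'_{\Lambda_1}(s'_{\Lambda_2})^\top$ splits into four pieces: integral$\times$integral, integral$\times$sum, sum$\times$integral, and sum$\times$sum. For the sum$\times$sum piece I would apply the GNZ formula twice (once for each point set), which introduces the two-point Papangelou intensity $\lambda_{\theta^\star}(\{u,v\},\bX)$ and, crucially, a diagonal contribution when the same point $u\in\bX_{\Lambda_1}\cap\bX_{\Lambda_2}$ is picked by both sums; this diagonal term gives the first line of the claimed formula. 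For the mixed integral$\times$sum pieces, applying GNZ once to the sum converts them into double integrals against $\lambda_{\theta^\star}(\{u,v\},\bX)$, but one must be careful: when the GNZ insertion point $u$ coincides with the integration variable in the other factor, the functional $t(v,\bX_{B_{v,n}})$ is evaluated on $\bX\cup u$ rather than $\bX$, which is exactly what produces the difference operators $\Delta_v t$ and $\Delta_u t$ in the third line. Collecting the off-diagonal terms yields the middle line with $\lambda_{\theta^\star}(u,\bX)\lambda_{\theta^\star}(v,\bX)-\lambda_{\theta^\star}(\{u,v\},\bX)$. The integrability of every term is guaranteed by Lemma~\ref{lem:int}(iii)--(iv) together with the uniform bounds of Lemma~\ref{lem:XWn}(i).

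*(ii)* Using the formula from (i) with $\Lambda_1=\Lambda_2=\Delta_n$, I would divide by $|\Delta_n|$ and pass to the limit. The first term is $|\Delta_n|^{-1}\int_{\Delta_n}\EE\{t(u,\bX_{B_{u,n}})t(u,\bX_{B_{u,n}})^\top\lambda_{\theta^\star}(u,\bX)\}\dd u$; by stationarity the integrand is constant in $u$ up to the dependence on $\alpha_n$ through $B_{u,n}$, and as $\alpha_n\to\infty$ it converges to $\EE\{t(0,\bX)t(0,\bX)^\top\lambda_{\theta^\star}(0,\bX)\}$ by dominated convergence (domination via Lemma~\ref{lem:XWn}(i) and Lemma~\ref{lem:int}). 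For the two double-integral terms, the change of variables $v\mapsto v-u$ together with stationarity reduces $|\Delta_n|^{-1}\int_{\Delta_n}\int_{\Delta_n}(\cdots)\dd u\dd v$ to $\int_{\RR^d}\EE\{(\cdots)\}\dd v$ in the limit, provided the integrand is absolutely integrable in $v$; here I would use $\lambda_{\theta^\star}(0,\bX)\lambda_{\theta^\star}(v,\bX)-\lambda_{\theta^\star}(\{0,v\},\bX)=\lambda_{\theta^\star}(0,\bX)\lambda_{\theta^\star}(v,\bX)(1-e^{-\Phi_{\theta^\star}(v)})$ and $\Delta_v t(0,\bX)=-g(v)$, which together with $\int_{\RR^d}|1-e^{-\Phi_{\theta^\star}(v)}|\dd v<\infty$ and $\int_{\RR^d}\|g(v)g(v)^\top\|e^{-\Phi_{\theta^\star}(v)}\dd v<\infty$ (both noted right after~\eqref{eq:UInfty}) gives absolute integrability and lets us identify the limit with $\Sigma_\infty$ in~\eqref{eq:SigmaInfty}.

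*(iii)* When $|k-j|>2r_0$ the cubes $\Delta_{n,j}$ and $\Delta_{n,k}$ are disjoint, so the diagonal term in (i) vanishes and only the two double integrals over $\Delta_{n,j}\times\Delta_{n,k}$ remain. For $u\in\Delta_{n,j}$, $v\in\Delta_{n,k}$ we have $\|v-u\|\geq|k-j|-c\geq c\,|k-j|\geq r_0$, so \assg(iii) gives $|\Phi_{\theta^\star}(v-u)|\leq c\|v-u\|^{-\gamma_2}\leq c\,|k-j|^{-\gamma_2}$, hence $|1-e^{-\Phi_{\theta^\star}(v-u)}|\leq c\,|k-j|^{-\gamma_2}$ and $\|g(v-u)g(u-v)^\top\|\leq c\,|k-j|^{-2\gamma_2}$. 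Plugging these into the two integral terms, bounding the remaining factors $\EE\{\|t(u,\bX_{B_{u,n}})\|\,\|t(v,\bX_{B_{v,n}})\|\,\lambda_{\theta^\star}(u,\bX)\lambda_{\theta^\star}(v,\bX)\}$ uniformly by a constant via Lemmas~\ref{lem:XWn}(i) and~\ref{lem:int}(iv), and integrating over the unit-volume cubes yields the bound $c\,|k-j|^{-\gamma_2}$.

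*(iv)* This is the item I expect to be the main obstacle, since it quantifies the discrepancy between the biased functional $s_{\Delta}$ (with $\lambda_{\theta^\star}(u,\bX_{B_{u,n}})$ in the integral and $t(u,\bX_{B_{u,n}}\setminus u)$ in the sum) and the conditionally centered surrogate $s'_{\Delta}$ (with $\lambda_{\theta^\star}(u,\bX)$), uniformly in $j,k$ and with the right decay in both $\alpha_n$ and $|k-j|$. The plan is to write $s_{\Delta_{n,j}}-s'_{\Delta_{n,j}}=\int_{\Delta_{n,j}}t(u,\bX_{B_{u,n}})\{\lambda_{\theta^\star}(u,\bX_{B_{u,n}})-\lambda_{\theta^\star}(u,\bX)\}\dd u=:E_{n,j}$ and use the bilinearity identity
\[
\Cov(s_{\Delta_{n,j}},s_{\Delta_{n,k}})-\Cov(s'_{\Delta_{n,j}},s'_{\Delta_{n,k}})=\Cov(E_{n,j},s_{\Delta_{n,k}})+\Cov(s'_{\Delta_{n,j}},E_{n,k}).
\]
By Lemma~\ref{lem:XWn}(iii) the integrand of $E_{n,j}$ is bounded pointwise by $c\,\alpha_n^{-\gamma'}Y(u,\bX)$ with $Y$ having finite moments of all orders (Lemma~\ref{lem:int}(iv)), so Cauchy--Schwarz already gives each covariance term the crude bound $c\,\alpha_n^{-\gamma'}$; after accounting for both factors this is $c\,\alpha_n^{-2\gamma'}$ when one also replaces $s_{\Delta_{n,k}}$ by $E_{n,k}$, which is the second term in the claimed bound and handles the near-diagonal range $|k-j|\le 2r_0$. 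For the decaying-in-$|k-j|$ part, when $|k-j|>2r_0$ I would expand $\Cov(E_{n,j},s_{\Delta_{n,k}})$ by GNZ exactly as in (i): the diagonal term disappears, and in the surviving double integrals over $\Delta_{n,j}\times\Delta_{n,k}$ the factor $\lambda_{\theta^\star}(u,\bX_{B_{u,n}})-\lambda_{\theta^\star}(u,\bX)$ contributes $\alpha_n^{-\gamma'}$ (Lemma~\ref{lem:XWn}(iii)) while the separation $\|v-u\|\ge c|k-j|\ge r_0$ contributes the factor $(1+|k-j|^{\gamma_2})^{-1}$ through $|1-e^{-\Phi_{\theta^\star}(v-u)}|$ and $\|g(v-u)\|$ as in (iii); the product gives $c\,\alpha_n^{-\gamma'}(1+|k-j|^{\gamma_2})^{-1}$. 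Taking the maximum of the two regimes yields the stated bound. The delicate point throughout is to keep every remaining $\EE\{\cdots\}$ factor uniformly bounded in $u,v,j,k,n$, which forces careful use of the $e^{c\,G(u,\bX)}$ domination in Lemma~\ref{lem:XWn}(iii) combined with the exponential-moment estimate Lemma~\ref{lem:int}(ii).
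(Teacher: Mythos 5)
Your proposal is correct and follows essentially the same route as the paper: the same decomposition $s_{\Delta_{n,j}}=s'_{\Delta_{n,j}}+E_{n,j}$ and splitting of the covariance difference in (iv), the same use of $|1-e^{-\Phi_{\theta^\star}(v-u)}|\leq c(1+|k-j|)^{-\gamma_2}$ together with Lemma~\ref{lem:XWn}(iii) to extract the $\alpha_n^{-\gamma'}$ and $|k-j|^{-\gamma_2}$ factors, and the same stationarity/dominated-convergence argument for (ii) and (iii). For (i) the paper simply cites \citet[Lemma 3.1]{coeurjolly:rubak:13}; your iterated-GNZ expansion is the standard proof of that identity (the only imprecision being that the $\Delta_v t\,\{\Delta_u t\}^\top$ line comes from combining the two mixed terms with the off-diagonal sum$\times$sum term, not from the mixed terms alone, and the sign of $\Delta_v t(0,\bX)$ is $+g(v)$, which is immaterial in the product).
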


\begin{proof}
(i) is a slight extension of \citet[Lemma 3.1]{coeurjolly:rubak:13} where the case $\Lambda_1=\Lambda_2$ was considered. The proof is omitted.

For (ii),  we note that for any $u,v \in \RR^d$, $m\geq 1$ and $\bx \in \Omega$
\begin{equation}\label{relation1}
  \lth(u,\bx)\lth(v,\bx)  - \lth(\{u,v\},\bx) = \lth(u,\bx)\lth(v,\bx) \{ 1-e^{-\Phi_{\theta^\star}(v-u)}\}
\end{equation}
and
\begin{equation}\label{relation2}
  \Delta_v t_m(u,\bx) = t_m(u,\bx \cup v)-t_m(u,\bx) =\left\{
\begin{array}{ll}
0 & \mbox{ if } m=1 \\
  g_m(v) & \mbox{ if } m\geq 2
\end{array}
  \right.
\end{equation}
which leads to $\Delta_u t(v,\bx)=g(v)$. Letting $|t|(u,\bx)=\{|t_m|(u,\bx)\}_{m\geq 1}$ for any $u\in \RR^d$ and $\bx \in \Omega$, we have for any $u,v\in \RR^d$
\[
  \|t(u,\bX_{B_{u,n}}) t(v,\bX_{B_{v,n}})^\top \| \leq \| \, |t|(u,\bX)|t|(v,\bX)^\top \|.
\]
The result is derived using the dominated convergence theorem, the stationarity of $\bX$ and since from Lemma~\ref{lem:int} the random variables $\|\,|t|(0,\bX)|t|(0,\bX)^\top \| \lth(0,\bX)$ and $\| \,|t|(0,\bX)|t|(v,\bX)^\top \| \lth(0,\bX)\lth(v,\bX)$ have expectation uniformly bounded in $v$  while  by \assg
\[
  \int_{\RR^d} |1-e^{-\Phi_{\theta^\star}(v)}|\dd v<\infty 
  \quad \mbox{ and } \quad
  \int_{\RR^d} \|g(v)g(v)^\top\| e^{-\Phi_{\theta^\star}(v)} \dd v<\infty.
\]

To prove (iii), we apply (i) to the disjoint sets  $\Delta_{n,j}$, $\Delta_{n,k}$ and relations \eqref{relation1}-\eqref{relation2} to get
\begin{align}\label{ineqcov}
  & \left\|\Cov( s'_{\Delta_{n,j}} ,s'_{\Delta_{n,k}}  ) \right\| \nonumber \\
  & \leq  \EE \int_{\Delta_{n,j}}\int_{\Delta_{n,k}} \| \,|t|(u,\bX)|t|(v,\bX)^\top\|  \lth(u,\bX)\lth(v,\bX) |1-e^{-\Phi_{\theta^\star}(v-u)}| \dd u\dd v \nonumber\\
& \quad+ \EE \int_{\Delta_{n,j}}\int_{\Delta_{n,k}} \| g(v-u)g(v-u)^\top \| \ \lth(u,\bX)\lth(v,\bX)e^{-\Phi_{\theta^\star}(v-u)} 
\dd u\dd v.
\end{align}
Since $|k-j|>2r_0$, we deduce from  \assg~that for any $(u,v)\in \Delta_{n,j}\times \Delta_{n,k}$ and any $m\geq 2$, $|g_m(v-u)|\leq c |k-j|^{-\gamma_2}$. This leads to 
\[\| g(v-u)g(v-u)^\top \|\leq c |k-j|^{-\gamma_2}.\]  
Similarly since $\Phi_{\theta^\star}=\sum_{m=2}^p \theta_m^\star g_m$,  for any $(u,v)\in \Delta_{n,j}\times \Delta_{n,k}$,  $e^{-\Phi_{\theta^\star}(v-u)}\leq e^{|\Phi_{\theta^\star}(v-u)|}\leq c$ and 
\begin{equation}\label{ineq1}  |1-e^{-\Phi_{\theta^\star}(v-u)}| \leq |\Phi_{\theta^\star}(v-u)|e^{|\Phi_{\theta^\star}(v-u)|}\leq c |k-j|^{-\gamma_2}.\end{equation}
Plugging these inequalities in \eqref{ineqcov} shows (iii), as the remaining terms have finite expectations  from Lemma~\ref{lem:int}.

We now focus on (iv). Let us write
$ s_{\Delta_{n,j}} = s'_{\Delta_{n,j}} + I_j$ where $I_j =\int_{\Delta_{n,j}} \pi_n(u,\bx) \dd u$ and 
\[  \pi_n(u,\bx)= t(u,\bx_{B_{u,n}}) \{ \lth(u,\bx_{B_{u,n}}) - \lth(u,\bx)\}. \]
We have
\begin{align}\label{deccov}
 \Cov( s_{\Delta_{n,j}} , & s_{\Delta_{n,k}})  -\Cov(s'_{\Delta_{n,j}} ,s'_{\Delta_{n,k}}) \nonumber \\
 & =  \EE ( s'_{\Delta_{n,j}}I_k^\top) + \EE ( s'_{\Delta_{n,k}}I_j^\top) + \EE (I_j I_k^\top) - \EE (s_{\Delta_{n,j}})  \EE {(s_{\Delta_{n,k}})}^\top. 
\end{align}
Let us control each term in \eqref{deccov}. From the GNZ formula
\[  \EE ( s'_{\Delta_{n,j}}I_k^\top)  = \EE \int_{\Delta_{n,j}}\int_{\Delta_{n,k}} t(u,\bX) \lth(u,\bX )  \{ \pi_n(v,\bX) -\pi_n(v,\bX\cup u) \}^\top\dd u \dd v.\]
By definition of $\lth$ and $t$ (see \eqref{eq:pigpp} and \eqref{deft}), we have for any $u,v\in \RR^d$ and $\bx \in \Omega$
\begin{align*}
  \pi_n(v,\bx \cup u)  = e^{-\Phi_{\theta^\star}(v-u)}[\pi_n(v,\bx) + g(v-u) \{ \lth(v,\bx_{B_{v,n}} )- \lth(v,\bx )\}],
\end{align*}
whereby
\begin{align}\label{term1}
& \EE ( s'_{\Delta_{n,j}}  I_k^\top)   =  \EE \int_{\Delta_{n,j}}\int_{\Delta_{n,k}}t(u,\bX)\lth(u,\bX)  \big\{ 1-e^{-\Phi_{\theta^\star}(v-u)}\big\}  \pi_n(v,\bX)^\top\dd u \dd v\nonumber\\
 &- \EE \int_{\Delta_{n,j}}\int_{\Delta_{n,k}} t(u,\bX)\lth(u,\bX)  e^{-\Phi_{\theta^\star}(v-u)} g(v-u)^\top \{ \lth(v,\bX_{B_{v,n}} )- \lth(v,\bX )\} \dd u\dd v.
\end{align}
\assg~implies \assPhi~which in turn yields $ |1-e^{-\Phi_{\theta^\star}(v-u)}| \leq 1 + e^{-\Phi_{\theta^\star}(v-u)}\leq c$ since $\Phi_{\theta^\star}$ is  bounded from below. On the other hand, for any $m\geq 2$, denoting $\tilde \Phi_{\theta^\star} = \Phi_{\theta^\star}-\varepsilon |\theta_m^\star g_m|$ for some $\varepsilon>0$, we have $|g_m| e^{-\Phi_{\theta^\star}} = |g_m| e^{-\varepsilon |\theta_m^\star g_m|} e^{-\tilde \Phi_{\theta^\star}}\leq c$ since $x\mapsto x e^{-\kappa x}$ is bounded on $[0,\infty)$ for any $\kappa>0$ and $\tilde \Phi_{\theta^\star}$ satisfies \assPhi~as seen in the proof of Lemma~\ref{lem:int}. This proves that for any $u,v$, $\| e^{-\Phi_{\theta^\star}(v-u)} g(v-u)^\top\|$ is bounded.
Moreover, from \eqref{ineq1}, we know that if  $|k-j|>2r_0$, then $ |1-e^{-\Phi_{\theta^\star}(v-u)}| \leq c |k-j|^{-\gamma_2}$ and similarly  $\|e^{-\Phi_{\theta^\star}(v-u)} g(v-u)^\top\|  \leq c |k-j|^{-\gamma_2}$.  We deduce that for any $u\in\Delta_{n,j}$, any  $v\in \Delta_{n,k}$ and any $j,k$, $ |1-e^{-\Phi_{\theta^\star}(v-u)}| \leq c (1+|k-j|)^{-\gamma_2}$ and $\|e^{-\Phi_{\theta^\star}(v-u)} g(v-u)^\top\|  \leq c (1+|k-j|)^{-\gamma_2}$.
Plugging these inequalities in \eqref{term1} and applying Lemmas~\ref{lem:XWn}-\ref{lem:int} to the remaining terms shows that for any $j,k$
\begin{equation}\label{ineq2}
\| \EE ( s'_{\Delta_{n,j}}  I_k^\top)  \| \leq  \frac{c}{\alpha_n^{\gamma^\prime} (1+|k-j|^{\gamma_2})}.
\end{equation}
The same inequality obviously holds for $\| \EE ( s'_{\Delta_{n,k}}  I_j^\top)  \|$.  For the two last terms in the right hand side of \eqref{deccov}, namely
\[  \EE (I_j I_k^\top) = \EE \int_{\Delta_{n,j}}\int_{\Delta_{n,k}} \pi_n(u,\bX)  \pi_n(v,\bX)^\top  \dd u \dd v\]
and, after application of the GNZ formula,
\[ \EE (s_{\Delta_{n,j}}) \EE (s_{\Delta_{n,k}})^\top =  \int_{\Delta_{n,j}}\int_{\Delta_{n,k}} \EE \pi_n(u,\bX) \EE \pi_n(v,\bX)^\top  \dd u \dd v,\]
we deduce from Lemmas~\ref{lem:XWn}-\ref{lem:int} that their norm is bounded by $\alpha_n^{-2\gamma'}$ for any $j,k$, up to a positive  constant. The latter and \eqref{ineq2} prove (iv). 
\end{proof}

\section*{Acknowledgements}
The authors would like to thank sincerely the referee for providing many interesting remarks and relevant suggestions. Her/his work allowed us to have a new look on a previous version of this paper and allowed us to improve our results in several directions. The research of J.-F. Coeurjolly is partially funded by Persyval-lab EA Oculo-Nimbus.

\bibliographystyle{plainnat} 
\bibliography{clt} 

\begin{thebibliography}{42}
\providecommand{\natexlab}[1]{#1}
\providecommand{\url}[1]{\texttt{#1}}
\expandafter\ifx\csname urlstyle\endcsname\relax
  \providecommand{\doi}[1]{doi: #1}\else
  \providecommand{\doi}{doi: \begingroup \urlstyle{rm}\Url}\fi

\bibitem[Baddeley and Dereudre(2013)]{baddeley:dereudre:13}
A.~Baddeley and D.~Dereudre.
\newblock Variational estimators for the parameters of {G}ibbs point process
  models.
\newblock \emph{Bernoulli}, 19\penalty0 (3):\penalty0 905--930, 2013.

\bibitem[Baddeley and Turner(2000{\natexlab{a}})]{baddeley:turner:00}
A.~Baddeley and R.~Turner.
\newblock Practical maximum pseudolikelihood for spatial point patterns.
\newblock \emph{Australian and New Zealand Journal of Statistics}, 42:\penalty0
  283--322, 2000{\natexlab{a}}.

\bibitem[Baddeley and Turner(2000{\natexlab{b}})]{baddturn00}
A.~Baddeley and R.~Turner.
\newblock Practical maximum pseudolikelihood for spatial point patterns (with
  discussion).
\newblock \emph{Australian and New Zealand Journal of Statistics}, 42\penalty0
  (3):\penalty0 283--322, 2000{\natexlab{b}}.

\bibitem[Baddeley and Turner(2005)]{baddeley:turner:05}
A.~Baddeley and R.~Turner.
\newblock Spatstat: an {{\sf R}} package for analyzing spatial point patterns.
\newblock \emph{Journal of Statistical Software}, 12:\penalty0 1--42, 2005.

\bibitem[Baddeley et~al.(2014)Baddeley, Coeurjolly, Rubak, and
  Waagepetersen]{baddeley:coeurjolly:rubak:waagepetersen:14}
A.~Baddeley, J.-F. Coeurjolly, E.~Rubak, and R.~Waagepetersen.
\newblock Logistic regression for spatial {G}ibbs point processes.
\newblock \emph{Biometrika}, 101\penalty0 (2):\penalty0 377--392, 2014.

\bibitem[Baddeley et~al.(2015)Baddeley, Rubak, and
  Turner]{baddeley:rubak:turner:15}
A.~Baddeley, E.~Rubak, and R.~Turner.
\newblock \emph{Spatial Point Patterns: Methodology and Applications with R.}
\newblock Chapman and Hall/CRC Press, London, 2015.
\newblock In press.

\bibitem[Bertin et~al.(1999)Bertin, Billiot, and Drouilhet]{A-BerBilDro99}
E.~Bertin, J.-M. Billiot, and R.~Drouilhet.
\newblock Existence of ``nearest-neighbour'' spatial {G}ibbs models.
\newblock \emph{Advances in Applied Probability}, 31:\penalty0 895--909, 1999.

\bibitem[Besag(1977)]{besag:77}
J.~Besag.
\newblock Some methods of statistical analysis for spatial data.
\newblock \emph{Bulletin of the International Statistical Institute},
  47:\penalty0 77--92, 1977.

\bibitem[Billiot et~al.(2008)Billiot, Coeurjolly, and
  Drouilhet]{billiot:coeurjolly:drouilhet:08}
J.-M. Billiot, J.-F. Coeurjolly, and R.~Drouilhet.
\newblock {M}aximum pseudolikelihood estimator for exponential family models of
  marked {G}ibbs point processes.
\newblock \emph{Electronic Journal of Statistics}, 2:\penalty0 234--264, 2008.

\bibitem[Bolthausen(1982)]{bolthausen:82}
E.~Bolthausen.
\newblock On the central limit theorem for stationary mixing random fields.
\newblock \emph{The Annals of Probability}, 10:\penalty0 1047--1050, 1982.

\bibitem[Chiu et~al.(2013)Chiu, Stoyan, Kendall, and Mecke]{chiu2013}
S.~N. Chiu, D.~Stoyan, W.~S. Kendall, and J.~Mecke.
\newblock \emph{Stochastic geometry and its applications}.
\newblock John Wiley \& Sons, Chichester, third edition, 2013.

\bibitem[Coeurjolly and Drouilhet(2010)]{A-CoeDro10}
J.-F. Coeurjolly and R.~Drouilhet.
\newblock Asymptotic properties of the maximum pseudo-likelihood estimator for
  stationary {G}ibbs point processes including the {L}ennard-{J}ones model.
\newblock \emph{Electronic Journal of Statistics}, 4:\penalty0 677--706, 2010.

\bibitem[Coeurjolly and Lavancier(2013)]{coeurjolly:lavancier:13}
J.-F. Coeurjolly and F.~Lavancier.
\newblock Residuals for stationary marked {G}ibbs point processes.
\newblock \emph{Journal of the Royal Statistical Society, Series B},
  75\penalty0 (2):\penalty0 247--276, 2013.

\bibitem[Coeurjolly and M\o{}ller(2014)]{coeurjolly:moeller:14}
J.-F. Coeurjolly and J.~M\o{}ller.
\newblock Variational approach for spatial point process intensity estimation.
\newblock \emph{Bernoulli}, 20\penalty0 (3):\penalty0 1097--1125, 2014.

\bibitem[Coeurjolly and Rubak(2013)]{coeurjolly:rubak:13}
J.-F. Coeurjolly and E.~Rubak.
\newblock Fast covariance estimation for innovations computed from a spatial
  {G}ibbs point process.
\newblock \emph{Scandinavian Journal of Statistics}, 40\penalty0 (4):\penalty0
  669--684, 2013.

\bibitem[Comets and Jan{\v{z}}ura(1998)]{comets:janzura:98}
F.~Comets and M.~Jan{\v{z}}ura.
\newblock A central limit theorem for conditionally centred random fields with
  an application to {M}arkov fields.
\newblock \emph{Journal of applied probability}, 35\penalty0 (3):\penalty0
  608--621, 1998.

\bibitem[Daley and Vere-Jones(2003)]{daley:vere-jones:03}
D.~J. Daley and D.~Vere-Jones.
\newblock \emph{An Introduction to the Theory of Point Processes. Volume I:
  Elementary Theory and Methods}.
\newblock Springer-Verlag, New York, second edition, 2003.

\bibitem[Decreusefond and Flint(2014)]{DecreusefondFlint14}
L.~Decreusefond and I.~Flint.
\newblock Moment formulae for general point processes.
\newblock \emph{Journal of Functional Analysis}, 267\penalty0 (2):\penalty0
  452--476, 2014.

\bibitem[Dedecker(1998)]{dedecker:98}
J.~Dedecker.
\newblock A central limit theorem for stationary random fields.
\newblock \emph{Probability Theory and Related Fields}, 110\penalty0
  (3):\penalty0 397--426, 1998.

\bibitem[Dereudre and Lavancier(2009)]{A-DerLav09}
D.~Dereudre and F.~Lavancier.
\newblock {C}ampbell equilibrium equation and pseudo-likelihood estimation for
  non-hereditary {G}ibbs point processes.
\newblock \emph{Bernoulli}, 15\penalty0 (4):\penalty0 1368--1396, 2009.

\bibitem[Georgii(1976)]{A-Geo76}
H.~O. Georgii.
\newblock {{C}anonical and grand canonical {G}ibbs states for continuum
  systems}.
\newblock \emph{Communications in Mathematical Physics}, 48\penalty0
  (1):\penalty0 31--51, 1976.

\bibitem[Georgii(1988)]{georgii:88}
H.~O. Georgii.
\newblock \emph{Gibbs measures and phase transitions}.
\newblock Walter de Gruyter, Berlin, 1988.

\bibitem[Guyon(1995)]{guyon:95}
X.~Guyon.
\newblock \emph{Random fields on a network. {M}odeling, {S}tatistics and
  {A}pplications}.
\newblock Springer Verlag, New York, 1995.

\bibitem[Guyon and K{\"u}nsch(1992)]{guyon1992}
X.~Guyon and H.~R. K{\"u}nsch.
\newblock Asymptotic comparison of estimators in the {I}sing model.
\newblock In \emph{Stochastic Models, Statistical Methods, and Algorithms in
  Image Analysis}, volume~74 of \emph{Lecture Notes in Statistics}, pages
  177--198. Springer, New York, 1992.

\bibitem[Heinrich(1992)]{heinrich:92}
L.~Heinrich.
\newblock Mixing properties of {G}ibbsian point processes and asymptotic
  normality of {T}akacs-{F}iksel estimates.
\newblock \emph{preprint}, 1992.

\bibitem[Huang and Ogata(1999)]{huang1999}
F.~Huang and Y.~Ogata.
\newblock Improvements of the maximum pseudo-likelihood estimators in various
  spatial statistical models.
\newblock \emph{Journal of Computational and Graphical Statistics}, 8\penalty0
  (3):\penalty0 510--530, 1999.

\bibitem[Illian et~al.(2008)Illian, Penttinen, Stoyan, and
  Stoyan]{illian:et:al:08}
J.~Illian, A.~Penttinen, H.~Stoyan, and D.~Stoyan.
\newblock \emph{Statistical analysis and modelling of spatial point patterns}.
\newblock Statistics in Practice. Wiley, Chichester, 2008.

\bibitem[Jensen(1993)]{jensen:93}
J.~L. Jensen.
\newblock Asymptotic normality of estimates in spatial point processes.
\newblock \emph{Scandinavian Journal of Statistics}, 20:\penalty0 97--109,
  1993.

\bibitem[Jensen and K\"unsch(1994)]{jensen:kunsch:94}
J.~L. Jensen and H.~R. K\"unsch.
\newblock On asymptotic normality of pseudolikelihood estimates of pairwise
  interaction processes.
\newblock \emph{Annals of the Institute of Statistical Mathematics},
  46:\penalty0 475--486, 1994.

\bibitem[Jensen and M\o{}ller(1991)]{A-JenMol91}
J.~L. Jensen and J.~M\o{}ller.
\newblock Pseudolikelihood for exponential family models of spatial point
  processes.
\newblock \emph{Annals of Applied Probability}, 1:\penalty0 445--461, 1991.

\bibitem[Lennard-Jones(1924)]{jones1924}
J.~E. Lennard-Jones.
\newblock On the determination of molecular fields.
\newblock \emph{Proceedings of the Royal Society of London. Series A,
  Containing Papers of a Mathematical and Physical Character}, pages 463--477,
  1924.

\bibitem[Lieshout(2000)]{vanLieshout:00}
M.N.M.~Van Lieshout.
\newblock \emph{Markov point processes and their applications}.
\newblock World Scientific, London, 2000.

\bibitem[Mase(1995)]{mase:95}
S.~Mase.
\newblock Consistency of the maximum pseudo-likelihood estimator of continuous
  state space {G}ibbs processes.
\newblock \emph{Annals of Applied Probability}, 5:\penalty0 603--612, 1995.

\bibitem[Mecke(1968)]{mecke1968}
J.~Mecke.
\newblock Eine charakteristische eigenschaft der doppelt stochastischen
  poissonschen prozesse.
\newblock \emph{Zeitschrift f{\"u}r Wahrscheinlichkeitstheorie und Verwandte
  Gebiete}, 11\penalty0 (1):\penalty0 74--81, 1968.

\bibitem[M{\o}ller and Waagepetersen(2004)]{moeller:waagepetersen:04}
J.~M{\o}ller and R.~P. Waagepetersen.
\newblock \emph{Statistical inference and simulation for spatial point
  processes}.
\newblock Chapman and Hall/CRC, Boca Raton, 2004.

\bibitem[Nguyen and Zessin(1979{\natexlab{a}})]{nguyen:zessin:79}
X.~Nguyen and H.~Zessin.
\newblock {E}rgodic theorems for spatial processes.
\newblock \emph{Zeitschrift für Wahrscheinlichkeitstheorie verwiete Gebiete},
  48:\penalty0 133--158, 1979{\natexlab{a}}.

\bibitem[Nguyen and Zessin(1979{\natexlab{b}})]{nguyen:zessin:79b}
X.~Nguyen and H.~Zessin.
\newblock {Integral and differential characterizations {G}ibbs processes}.
\newblock \emph{Mathematische Nachrichten}, 88\penalty0 (1):\penalty0 105--115,
  1979{\natexlab{b}}.

\bibitem[Ogata and Tanemura(1981)]{ogattane81}
Y.~Ogata and M.~Tanemura.
\newblock Estimation of interaction potentials of spatial point patterns
  through the maximum likelihood procedure.
\newblock \emph{Annals of the Institute of Statistical Mathematics},
  33:\penalty0 315--338, 1981.

\bibitem[Preston(1976)]{preston:76}
C.~Preston.
\newblock \emph{Random fields}.
\newblock Number 534 in Lecture Notes in Mathematics. Springer-Verlag, Berlin,
  1976.

\bibitem[Ruelle(1969)]{ruelle:69}
D.~Ruelle.
\newblock \emph{{S}tatistical {M}echanics}.
\newblock Benjamin, New York-Amsterdam, 1969.

\bibitem[Ruelle(1970)]{A-Rue70}
D.~Ruelle.
\newblock Superstable interactions in classical statistical mechanics.
\newblock \emph{Communications in Mathematical Physics}, 18\penalty0
  (2):\penalty0 127--159, 1970.

\bibitem[Slivnyak(1962)]{slivnyak1962}
I.M. Slivnyak.
\newblock Some properties of stationary flows of homogeneous random events.
\newblock \emph{Theory of Probability \& Its Applications}, 7\penalty0
  (3):\penalty0 336--341, 1962.

\end{thebibliography}

\end{document}